\documentclass[letter, 10pt, oneside, roman]{amsart} 
\usepackage[utf8]{inputenc}
\usepackage{enumitem}
\usepackage{mathtools}
\usepackage{xcolor}
\usepackage[normalem]{ulem}

\usepackage{lipsum}

\usepackage{layout}

\usepackage{graphicx}
\usepackage{hyperref}
\usepackage{amssymb}
\usepackage{amsmath}
\usepackage{mathrsfs}
\usepackage[all]{xy}
\usepackage{latexsym}
\usepackage[colorinlistoftodos]{todonotes}
\usepackage[english]{babel}
\usepackage{csquotes}
\usepackage{tikz-cd} 
\usepackage{cancel}
\usepackage{enumitem}
\usepackage[most]{tcolorbox}
\usepackage{mdframed}
\usepackage{cleveref}

\usepackage{lmodern}

\usepackage{cancel}
\usepackage{enumitem}
\usepackage{marginnote}
\usepackage{appendix}
\allowdisplaybreaks

\newcommand{\Sp}{\textrm{Sp}}

\DeclarePairedDelimiterX{\inner}[2]{\langle}{\rangle}{#1, #2}

\DeclareMathOperator{\Aut}{Aut}

\DeclareMathOperator{\PGL}{PGL}

\DeclareFontFamily{U}{wncy}{}
\DeclareFontShape{U}{wncy}{m}{n}{<->wncyr10}{}
\DeclareSymbolFont{mcy}{U}{wncy}{m}{n}
\DeclareMathSymbol{\Sh}{\mathord}{mcy}{"58}

\newcommand{\Scal}{\mathcal{S}}

\newcommand{\SL}{{\rm SL}}
\newcommand{\GL}{{\rm GL}}

\newcommand{\SO}{{\rm SO}}

\newcommand{\Stab}{{\rm Stab}}

\renewcommand{\phi}{\varphi}

\newcommand{\Vol}{{\rm Vol}}
\newcommand{\Diff}{{\rm Diff}}

\newcommand{\Bcal}{ {\mathcal B}}
\newcommand{\Ocal}{ {\mathcal O}}

\newcommand{\Acal}{ {\mathcal A}}
\newcommand{\Ccal}{ {\mathcal C}}


\newcommand{\Sbb}{ {\mathbb S}}

\newcommand{\Cbb}{\mathbb{C}}

\newcommand{\Tbb}{{\mathbb T}}
\newcommand{\Rbb}{{\mathbb R}}
\newcommand{\Zbb}{{\mathbb Z}}
\newcommand{\Nbb}{{\mathbb N}}




\theoremstyle{cupplain}
\newtheorem{theorem}{Theorem}[section]
\newtheorem{lem}[theorem]{Lemma}
\newtheorem{coro}[theorem]{Corollary}
\newtheorem{nota}[theorem]{Notation}
\newtheorem{prop}[theorem]{Proposition}

\theoremstyle{cupdefinition}

\theoremstyle{cupremark}
\newtheorem{rmk}[theorem]{Remark}
\newtheorem{eg}[theorem]{Example}
\theoremstyle{cupproof}

\newcommand{\signat}{\textrm{sign}}
\newcommand{\Gr}{\textrm{Gr}}

\makeindex


\begin{document}

\title[Remarks on global rigidity]{Global rigidity of actions by higher rank lattices with dominated splitting.}
\author{Homin Lee}
\address{\noindent Indiana University Bloomington \newline 831 East 3rd St., Bloomington,\newline IN 47405}
\email{\href{mailto:hl63@indiana.edu}{hl63@indiana.edu}}
\date{\today}
\maketitle\begin{abstract}

We prove that any smooth volume preserving action of a lattice $\Gamma$ in $\SL(n,\mathbb{R})$, $n\ge 3$, on a closed $n$-manifold which possess one element that admits a dominated splitting should be standard.  That means the manifold is the $n$-flat torus and the action is smoothly conjugate to an affine action. Note that an Anosov diffeomorphism, or more generally, a partial hyperbolic diffeomorphism admits a dominated splitting.  We obtained a topological global rigidity when $\alpha$ is $C^{1}$. We also prove similar theorems about an action of a lattice in $\Sp(2n,\Rbb)$ with $n\ge 2$ and $\SO(n,n)$, $n\ge 5$ on a closed $2n$-manifold.

\end{abstract}

\setcounter{tocdepth}{1}
\tableofcontents

\section{Introduction}

 In this paper, we will study topological and smooth global rigidity of higher rank lattice actions on specific dimensional manifold under a weak hyperbolicity assumption. Throughout the paper, a manifold is always a compact connected smooth Riemannian manifold without boundary. We can state one simple corollary of our result as following.
\begin{coro}\label{intro:coro}
Let $\Gamma<\SL(n,\Rbb)$ be a lattice for $n\ge 3$ and $\alpha\colon\Gamma\to \Diff^{1}(M^{n})$ be a volume preserving $C^{1}$ $\Gamma$ action on a closed smooth $n$ dimensional manifold $M$. Assume that 
there is a $\gamma_{0}\in \Gamma$ such that $\alpha(\gamma_{0})$ admits a dominated splitting.

Then the manifold $M$ is homeomorphic to a $n$-torus. Moreover, the action $\alpha$ is topologically conjugate to an affine action on the $n$-torus. If the action $\alpha$ is a $C^{\infty}$ action then the topological conjugacy is smooth.
\end{coro}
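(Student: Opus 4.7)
The plan is to combine Zimmer's cocycle superrigidity applied to the derivative cocycle with the dominated splitting hypothesis, in order to promote $\alpha(\gamma_0)$ to a genuine Anosov diffeomorphism, and then invoke Franks' theorem together with higher-rank rigidity to deduce the classification of $M$ and of the action.

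First I would apply cocycle superrigidity (in the form of Zimmer--Furstenberg, as adapted by Fisher--Margulis or Brown--Rodriguez Hertz--Wang) to the volume preserving derivative cocycle $D\alpha\colon \Gamma\times M\to \GL(n,\Rbb)$. Since $\Gamma$ is a higher rank lattice and $n$ is exactly the dimension of the standard representation, the cocycle is measurably cohomologous, up to a bounded/compact correction, to a linear representation of $\Gamma$, and dimension counting forces this representation to be (on a finite index subgroup) the standard embedding into $\SL(n,\Rbb)$. Consequently, for $\mu$-a.e.\ $x\in M$, the Lyapunov exponents of $\alpha(\gamma)$ at $x$ agree with those of $\gamma$ in the standard representation. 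In particular, passing to a suitable $\gamma_0$ whose standard representation eigenvalues all avoid the unit circle---which one can arrange within the open set of elements inheriting a dominated splitting, using openness of the dominated splitting condition and Zariski density of elements with hyperbolic standard image---the Lyapunov exponents of $\alpha(\gamma_0)$ are all nonzero a.e.

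The next step is to upgrade $\alpha(\gamma_0)$ from merely dominated to uniformly hyperbolic. The continuous dominated splitting $TM=E\oplus F$ must refine the measurable Oseledec decomposition, so one bundle collects all negative exponents and the other all positive ones. Domination, which controls angles and relative growth in a uniform continuous way, together with nonzero Lyapunov exponents of definite sign on each summand, is then enough (via the standard Ma\~n\'e/Pliss-type arguments) to promote $E$ to a uniformly contracting and $F$ to a uniformly expanding subbundle; hence $\alpha(\gamma_0)$ is Anosov. I expect this promotion step to be the main obstacle: dominated splitting is genuinely weaker than uniform hyperbolicity in the absence of additional information, and the argument has to interlock the measurable data from superrigidity with the continuous/uniform data from domination, without a priori knowing that the a.e.\ Pesin blocks have full measure in a useful topological sense.

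Once $\alpha(\gamma_0)$ is Anosov, Franks' theorem produces a semiconjugacy $h\colon M\to \Tbb^{n}$ intertwining $\alpha(\gamma_0)$ with the linear Anosov automorphism induced by $\gamma_0$ on $H_1(M;\Rbb)$; the identification with $\Tbb^{n}$ comes from the induced $\Gamma$ action on $\pi_1(M)$, where Margulis's normal subgroup theorem combined with the standard representation appearing in homology forces $\pi_1(M)\cong \Zbb^{n}$. Injectivity of $h$, hence $M\cong \Tbb^{n}$, is forced by the presence of the other elements of $\alpha(\Gamma)$: any nontrivial fiber of $h$ would have to be $\alpha(\Gamma)$-invariant, contradicting the full-rank Lyapunov data on $\Gamma$. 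Equivariance then promotes $h$ to a topological conjugacy between $\alpha$ and the standard affine $\Gamma$ action on $\Tbb^{n}$, yielding the $C^{1}$ statement.

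For the $C^\infty$ case, I would upgrade the topological conjugacy to a smooth one by appealing to the smoothness theorems for conjugacies between higher rank Anosov actions: the stable and unstable distributions of $\alpha(\gamma_0)$ are forced to be smooth by the cocycle superrigidity step combined with higher rank normal form theory, the conjugacy is then smooth along each such foliation, and a non-stationary linearization argument in the style of Katok--Spatzier and Rodriguez Hertz glues these to show that $h$ is globally $C^\infty$.
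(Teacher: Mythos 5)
Your outline shares the paper's skeleton (cocycle superrigidity for the derivative cocycle, promotion of one element to Anosov, Franks-type theorems, then the known rigidity results for Anosov lattice actions on tori), but the two load-bearing steps are different from the paper's and, as proposed, have genuine gaps.

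The first gap is the promotion step, which you yourself flag as the main obstacle. Superrigidity gives you Lyapunov data only for $\mu$-a.e.\ point with respect to the \emph{given} invariant volume; a dominated splitting together with nonzero exponents of constant sign almost everywhere for a single invariant measure does \emph{not} imply uniform hyperbolicity (the Ma\~n\'e/Cao-type promotion theorems require exponent control for \emph{every} invariant measure of $\alpha(\gamma_0)$, and an $\alpha(\gamma_0)$-invariant measure need not be $\alpha(\Gamma)$-invariant, so superrigidity says nothing about it). The paper avoids Pesin theory entirely: it first kills the trivial-representation piece of the superrigidity conclusion by a Poincar\'e recurrence argument against domination, and then proves that the projectivized measurable framing $\sigma$ is actually \emph{continuous} on the support of $\mu$. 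This is done by comparing the continuous dominated splittings of the conjugates $\gamma_j\gamma_0\gamma_j^{-1}$ with the images $\pi_*(\gamma_j)W^0_{E,*},\pi_*(\gamma_j)W^0_{F,*}$ under the framing, using the normalizer lemma plus Borel density to show the common stabilizer of these subspaces is trivial (so the orbit map into a product of Grassmannian bundles is an embedding with continuous inverse), and a general-position argument to rule out degenerate limits of the framing. Once the framing is continuous, uniform hyperbolicity of $\alpha(\gamma)$ for every hyperbolic $\gamma$ (not just $\gamma_0$) is immediate from the volume constraint and the eigenvalues of $\pi_i(\gamma)$. Relatedly, your claim that one can ``arrange within the open set of elements inheriting a dominated splitting'' a hyperbolic $\gamma_0$ is unfounded: $\Gamma$ is discrete, and the hypothesis only gives domination for $\gamma_0$ and its conjugates.

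The second gap is the identification $M\cong\Tbb^n$. Having one Anosov diffeomorphism plus an argument that $\pi_1(M)\cong\Zbb^n$ does not classify $M$ --- the classification of manifolds carrying Anosov diffeomorphisms is open, and Franks' semiconjugacy theorem presupposes the manifold is a (nil)torus. The paper instead uses Prasad--Rapinchuk to find a lattice element of $\Acal=A\cap\Gamma_0$ arbitrarily close to $\mathrm{diag}(\lambda^{n-1},\lambda^{-1},\dots,\lambda^{-1})$, whose image is then a \emph{codimension-one} Anosov diffeomorphism (here one also needs the observation in Remark 4.6 that only one of the defining/contragredient representations can occur on a connected $M$), and concludes $M\cong\Tbb^n$ from the Franks--Newhouse theorem. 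Your concluding steps (topological conjugacy and its smoothness) are consistent with the results of Brown--Rodriguez Hertz--Wang and Rodriguez Hertz--Wang that the paper cites, although the injectivity argument ``a nontrivial fiber would be $\alpha(\Gamma)$-invariant'' is not correct as stated (fibers are permuted, not preserved); the paper simply quotes \cite{BRHW17} for both the $C^0$ and $C^\infty$ conjugacy.
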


The conclusion says that, up to finite index, the action $\alpha$ conjugates with the automorphism action $\SL(n,\Zbb)$ on $\Tbb^{n}$ which has lots of Anosov diffeomorphisms.

Recall that, among others, an Anosov diffeomorphism or a partial hyperbolic diffeomorphism admits a dominated splitting. Even in the case that $\alpha(\gamma_{0})$ is an Anosov diffeomorphism, Corollary \ref{coro:main} gives a new rigidity result.

The existence of $C^{0}$ conjugacy implies that there is a finite index subgroup $\Gamma_{1}<\Gamma$ such that there is an unbounded group homomorphism from $\Gamma_{1}$ to $\SL(n,\Zbb)$. For instance, if $\Gamma$ is a cocompact, then such an action $\alpha$ can not exist. In other words, in that case, $\Gamma$ can not act on the $n$ manifold smoothly with an element which admits a dominated splitting.

The main motivations of this paper are twofold.

\begin{enumerate}
\item Recently Brown, Fisher and Hurtado proved that for a lattice $\Gamma$ in $\SL(n,\Rbb)$, $n\ge 3$, any volume-preserving smooth $\Gamma$ action on $n-1$ dimensional manifold should factor through finite group action. (e.g. \cite{BFH16} and \cite{BFH20}) The above corollary \ref{intro:coro} says that in an $n$-dimensional case, which is right after the critical dimension, if your action has a hyperbolic phenomenon, then the action should be standard action. 
\item Conjecturally, smooth lattice actions with one hyperbolic diffeomorphism conjugate to an algebraic action, see \cite{Goro} and \cite{Fis11}. Corollary \ref{intro:coro} gives an evidence for the conjecture.
\end{enumerate}

The global rigidity for higher rank lattice actions on manifold under hyperbolicity behavior studied many authors under various assumptions. We discuss some previous global rigidity results related to this paper.

First, without specifying base manifold, Feres proved that $C^{\infty}$ global rigidity for lattice in $\SL(n,\Rbb)$ ($n\ge 3$) action on $n$-manifold assuming that the action preserved connection, is non-isometric and is ergodic volume-preserving in \cite{FeresJDG}. In \cite{FeLab}, Feres and Labourie proved $C^{\infty}$ global rigidity result for a lattice in $\SL(n,\Rbb)$ ($n\ge 3$) action on $n$ manifold under certain assumptions including an Anosov property of the induced action on the suspension space. 
 
On the other hand, when the manifold is a torus (or more generally, a nilmanifold), Katok and Lewis proved that topological and smooth global rigidity of finite index subgroup of $\SL(n,\Zbb)$ action on the $n$-torus under certain assumptions including the existence of one Anosov element in \cite{KL96}. In \cite{KLZ}, Katok, Lewis, and Zimmer proved that smooth global rigidity of $\SL(n,\Zbb)$ action on $n$-torus assuming that the action has an ergodic fully supported probability measure and has one Anosov element. In \cite{GS99}, Goetze and Spatzier proved $C^{\infty}$ global rigidity of higher rank lattice volume-preserving Cartan actions on an arbitrary dimensional manifold. In \cite{MQ01}, Margulis and Qian proved topological global rigidity of higher rank lattice actions on nilmanifolds assuming that the action lifts to the universal cover, has an invariant measure, and has one Anosov element. Recently, Brown, Rodriguez Hertz, and Wang proved topological and smooth global rigidity of higher rank lattice actions on an arbitrary dimensional nilmanifolds only assuming a certain lifting condition and the existence of an Anosov element in \cite{BRHW17}.

\subsection{Statement of Main theorem}

In this paper, we will study about global rigidity of $\Gamma$ action on $M$ in the below cases \textbf{SL}, \textbf{Sp}, and \textbf{SO}. Roughly speaking, the main theorem says that in those cases, if we can see any uniform hyperbolicity from one element, then the entire group action should be algebraic. 
\begin{nota}[Case \textbf{SL}, \textbf{Sp}, and \textbf{SO}]\label{globnota}
We will focus on following cases.
\begin{enumerate}
\item[\textbf{SL}.] Let $n\ge 3$, $G=\SL(n,\Rbb)$, 
and $M$ be a closed $n$ dimensional manifold. Let $d=n$.
\item[\textbf{Sp}.] Let $n\ge 2$, $G=\Sp(2n,\Rbb)$, and $M$ be a closed $2n$ dimensional manifold. Let $d=2n$.

\item[\textbf{SO}.] Let $n\ge 5$, $G=\SO(n,n)$, and $M$ be a closed $2n$ dimensional manifold. Let $d=2n$.
\end{enumerate}
In the above cases, let $\Gamma<G$ be a lattice in $G$.
\end{nota}
Recall that the symplectic group $\Sp(2n,\Rbb)$ and the indefinite orthogonal group $\SO(n,n)$ is defined as follows. 
\[\Sp(2n,\Rbb)=\left\{g\in \SL(2n,\Rbb): g^{tr}Jg=J\right\},\quad J=\begin{bmatrix} 0_{n} & I_{n} \\ -I_{n} &0_{n} \end{bmatrix} \]
\[\SO(n,n)=\left\{g\in\SL(2n,\Rbb): g^{tr}I_{n,n}g=I_{n,n}\right\},\quad I_{n,n}=\begin{bmatrix} 0_{n}& I_{n} \\ I_{n} & 0_{n} \end{bmatrix}\] where $I_{n}$ is $n\times n$ identity matrix and $0_{n}$ is $n\times n$ zero matrix.

\begin{rmk}\label{rmk:sohigh} In the case of \textbf{SO}, we require that $n\ge 5$ although $n\ge 2$ is enough to get higher rank. When $n=2,3$, we have  isomorphisms \[\mathfrak{so}(2,2)\simeq \mathfrak{sl}(2,\Rbb)\times \mathfrak{sl}(2,\Rbb),\mathfrak{so}(3,3)\simeq \mathfrak{sl}(4,\Rbb).\] Therefore, in those cases, either the group is not simple or does not fit into the dimension condition. When $n=4$, $\textrm{Spin}(8,\Cbb)$ (or $\mathfrak{so}(8,\Cbb)$) admits $3$ nontrivial $8$ dimensional representations, defining and 2 half spin representations, so called \emph{Triality}. (e.g. \cite[Section 20.3]{FHrep}) We avoid this case for simplicity.

\end{rmk}
Recall that we say that $C^{1}$ diffeomorphism $f$ on $M$  \emph{admits a dominated splitting} if there is a $f$- invariant continuous nontrivial splitting $TM=E\oplus F$, and constants $C,\lambda>0$ such that for all  
$x\in M$ and $n\ge 0$, we have \[\frac{||D_{x}f^{n}(v)||}{||D_{x}f^{n}(w)||}<Ce^{-\lambda n}, \] for all $v\in E$, $w\in F$ with $||v||=||w||=1$. 
\begin{eg}
 Recall that $f\in \Diff^{1}(M)$ is called an \emph{Anosov diffeomorphism} if there is a $f$ invariant continuous nontrivial splitting $TM=E_{s}\oplus E_{u}$, and constants $C>0,\lambda<1$ such that for all $x\in M$ and $n\ge 0$, \[||D_{x}f^{n}(v)|| \le C\lambda^{n} ||v||, ||D_{x}f^{-n}(w)||\le C\lambda^{n}||w||,\] for all $v\in E_{s}$ and $w\in E_{u}$. All Anosov diffeomorphisms admit a dominated splitting. More generally partial hyperbolic diffeomorphism admits a dominated splitting.
 \end{eg}

Now the main result in this paper is the following.
\begin{theorem}[Main theorem]\label{thm:main}
Let $G$, $\Gamma$, $M$, and $d$ be as in Notation \ref{globnota}. Let $\alpha\colon\Gamma\to \Diff^{1}_{\Vol}(M)$ be a $C^{1}$ volume preserving action. 
Assume that there is $\gamma_{0}\in\Gamma$ such that $\alpha(\gamma_{0})$ admits dominated splitting.
Then the action has an Anosov element, i.e. \[\exists \gamma_{0}'\in\Gamma \textrm{ such that } \alpha(\gamma_{0}')\textrm{ is an Anosov diffeomorphism.}\]
Furthermore, in the case of \textbf{SL}, the manifold $M$ is homeomorphic to a torus. In the case of \textbf{Sp} and \textbf{SO}, the manifold $M$ is homeomorphic to a torus or an infra-torus. 
\end{theorem}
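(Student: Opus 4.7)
The plan is to combine Zimmer's cocycle superrigidity with the dominated splitting hypothesis to produce an Anosov element, and then invoke the Franks--Manning theory of Anosov diffeomorphisms to identify $M$ topologically. Throughout, let $\rho_{\mathrm{std}}\colon G\to \GL(d,\RR)$ be the standard representation of $G$; in each case of Notation~\ref{globnota}, $d$ is the minimal dimension of a nontrivial real representation of $G$, which is precisely why the $\SO$ case is restricted to $n\ge 5$ (cf.\ Remark~\ref{rmk:sohigh}).

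First I would restrict to an ergodic component of the volume measure $\mu$ and apply Zimmer's cocycle superrigidity to the derivative cocycle $D\alpha\colon \Gamma\times M\to \GL(d,\RR)$. After possibly passing to a finite-index subgroup of $\Gamma$, this yields a measurable framing of $TM$ in which $D\alpha$ is cohomologous to the product of a homomorphism $\rho\colon G\to \GL(d,\RR)$ and a compact-valued cocycle. Since the dominated splitting of $\alpha(\gamma_0)$ forces some Lyapunov exponent of $\alpha(\gamma_0)$ to be nonzero, $\rho$ cannot be trivial, and the dimension bound on real representations of $G$ forces $\rho$ to coincide, up to compact and finite-order corrections, with $\rho_{\mathrm{std}}$.

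Next I would extract an Anosov element. The superrigid framing identifies the Lyapunov exponents of $\alpha(\gamma)$ at $\mu$-almost every point with the weights of $\rho_{\mathrm{std}}(\gamma)$ on $\RR^d$. Choose $\gamma_0'\in\Gamma$ projecting to a regular $\RR$-semisimple element of $G$, so that $\rho_{\mathrm{std}}(\gamma_0')$ has all weights nonzero and distinct. The dominated splitting of $\alpha(\gamma_0)$ provides a continuous, uniformly transverse $\alpha(\gamma_0)$-invariant decomposition of $TM$ which can be aligned with the algebraic decomposition from $\rho_{\mathrm{std}}$; propagating this alignment to $\gamma_0'$ via the cocycle identity upgrades the nonvanishing Lyapunov spectrum of $\alpha(\gamma_0')$ to a continuous, uniformly hyperbolic invariant splitting, so that $\alpha(\gamma_0')$ is Anosov.

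Finally, with an Anosov element whose derivative cocycle is modelled on $\rho_{\mathrm{std}}$, Franks--Manning together with the extra algebraic input from the entire $\Gamma$-action identifies the topology of $M$: the algebraic model is semisimple on a hyperbolic element, so the underlying nilpotent structure is abelian and $M$ is homeomorphic to a nilmanifold with abelian fundamental group. In the $\SL$ case the absence of torsion forces $M\cong \Tbb^d$; in the $\Sp$ and $\SO$ cases $-\id\in Z(G)$ may induce a nontrivial holonomy, yielding either a torus or an infra-torus. The main obstacle in the whole plan is the promotion step in the third paragraph: cocycle superrigidity is inherently measurable, and upgrading the measurable framing to a continuous Anosov splitting under only $C^1$ regularity is delicate. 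This is where the dominated splitting hypothesis is indispensable, supplying the continuous, uniformly transverse invariant decomposition needed to bridge the measurable and topological worlds.
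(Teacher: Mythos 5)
Your high-level plan agrees with the paper's (cocycle superrigidity $\to$ rule out the trivial representation $\to$ upgrade the measurable framing to a continuous one using the dominated splitting $\to$ extract an Anosov element $\to$ identify $M$), and your second paragraph is essentially a correct gloss on the paper's Lemma \ref{lem:scrapply}. But the proposal has two genuine gaps where the real content lies.

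First, the step you yourself flag as ``delicate'' --- upgrading the measurable superrigid framing to a continuous projective section --- is simply asserted via ``propagating this alignment to $\gamma_0'$ via the cocycle identity.'' This is not an argument. The dominated splitting is invariant only under $\alpha(\gamma_0)$, and there is no a priori reason the cocycle identity should carry a single transverse splitting to uniform hyperbolicity for a different element $\gamma_0'$. The paper's Proposition \ref{prop:scrcont} handles this by taking a whole family of conjugates $\gamma_j\gamma_0\gamma_j^{-1}$, each of which carries a continuous dominated splitting $(E^j,F^j)$; by Lemma \ref{lem:lin} (Borel density plus the algebraic Lemma \ref{lem:Ferlin}) finitely many of the corresponding model subspaces $W^j_{E,*},W^j_{F,*}$ have trivial common stabilizer in $\PGL(V)$, so the orbit map $\Phi^0_*$ into a product of Grassmannian bundles is an injective local embedding (Lemma \ref{lem:injorb}). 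One then argues that the continuous map $\tau(x)=(E^j_x,F^j_x)_j$ lands in the orbit image everywhere, not just a.e.; the contradiction at a potential bad limit point $x_0$ uses the general-position Lemma \ref{lem:general}, which in the $\Sp$ and $\SO$ cases requires the nontrivial Corollaries \ref{coro:spgoal} and \ref{coro:sogoal}. None of this appears in your sketch, and without it there is no path from measurable to continuous.

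Second, the final step of identifying $M$ is wrong as stated. Franks--Manning is a classification of Anosov diffeomorphisms \emph{on} infranilmanifolds; it does not tell you that a closed manifold admitting an Anosov diffeomorphism is an infranilmanifold (that is an open problem in general). Also, the torus/infra-torus dichotomy in the theorem does not come from $-\mathrm{Id}\in Z(G)$. In the paper, the $\SL$ case uses Prasad--Rapinchuk (Theorem \ref{thm:PR}) to produce a lattice element in a maximal split torus whose linear model has only one expanding eigenvalue, hence $\alpha$ of that element is a \emph{codimension-one} Anosov diffeomorphism, and the Franks--Newhouse theorem forces $M\cong\Tbb^n$. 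In the $\Sp$ and $\SO$ cases one instead produces an element whose eigenvalues are nearly proportional (all $\lambda^{\pm 1}$), checks the spectral pinching inequalities, and invokes the \emph{Brin--Manning} theorem \ref{thm:BM} to conclude $M$ is flat, hence a torus or infra-torus by Bieberbach. The arithmetic producing those carefully-shaped elements, and the use of these two specific classification theorems in place of ``Franks--Manning,'' is essential and missing from your proposal.
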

\begin{rmk} Indeed, in Theorem \ref{thm:main}, we prove that $\alpha(\gamma)$ is an Anosov diffeomorphism on $M$ for all hyperbolic element $\gamma\in\Gamma$. This will allow us to prove that the manifold is a torus or an infra-torus.
\end{rmk}

\subsection{Global rigidity of the action}
The main theorem implies that the action is indeed algebraic under a lifting assumption.
\begin{coro}\label{coro:main}
Let's retain same settings and notations as in Theorem \ref{thm:main}. In the case of \textbf{Sp} and \textbf{SO}, assume that there is a finite index subgroup $\Gamma_{0}$ in $\Gamma$ such that the action $\alpha|_{\Gamma_{0}}$ lifts to the finite cover $M_{0}$ of $M$ that is homeomorphic to torus. Denote the lifted action of $\Gamma_{1}$ on $M_{0}$ as $\alpha_{0}$. In the case of \textbf{SL}, simply denote $\Gamma_{0}=\Gamma$ and $M_{0}=M$.

Then, there is a finite subgroup $\Gamma_{1}<\Gamma_{0}$ such that the lifted action $\alpha_{0}$ of $\Gamma_{1}$ on $M_{0}$ is  topologically conjugate to its linear data $\rho_{0}$. More precisely, there is a finite index subgroup $\Gamma_{1}<\Gamma_{0}$ and a homeomorphism $h\colon M_{0}\to M_{0}$ such that \[\rho_{0}(\gamma)\circ h=\alpha_{0}(\gamma)\circ h,\quad \forall \gamma\in\Gamma_{1},\] where $\rho_{0}\colon\Gamma_{1}\to \Aut(M_{0})\simeq \SL(d,\Zbb)$ is the associated linear data of $\alpha_{0}$.
If $\alpha$ is a $C^{\infty}$ action then the conjugacy $h$ is indeed $C^{\infty}$ as well.

 \end{coro}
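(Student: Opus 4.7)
\emph{Proof proposal.} The plan is to combine Theorem~\ref{thm:main} with the nilmanifold global rigidity theorem of Brown--Rodriguez Hertz--Wang \cite{BRHW17}. The corollary is not really a new dynamical result beyond the main theorem; almost all the content is already in Theorem~\ref{thm:main}, and what remains is to feed its output into existing rigidity machinery.

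First, I would reduce to a torus. In case \textbf{SL} take $M_0 = M$ and $\Gamma_0 = \Gamma$; Theorem~\ref{thm:main} already gives $M \approx \Tbb^n$. In cases \textbf{Sp} and \textbf{SO}, the hypothesis of the corollary supplies a finite cover $M_0 \approx \Tbb^{2n}$ together with a lift $\alpha_0$ of $\alpha|_{\Gamma_0}$. The dominated splitting of $\alpha(\gamma_0)$ pulls back along the covering to a dominated splitting of $\alpha_0(\gamma_0)$ on $M_0$, so Theorem~\ref{thm:main} applied to $\alpha_0$ produces an Anosov element $\alpha_0(\gamma_0')$ on $M_0$.

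Second, I would verify the hypotheses of \cite{BRHW17} for $(\Gamma_0,\alpha_0,M_0)$: a higher rank lattice action on a torus with an Anosov element, satisfying a lifting condition to $\Rbb^d$. The lifting condition is automatic after passing to a finite index subgroup $\Gamma_1 < \Gamma_0$, since the induced representation
\[
\rho_0 : \Gamma_1 \to \Aut(\pi_1(M_0)) \simeq \SL(d,\Zbb)
\]
realizes the desired linear data; up to finite index one can arrange that $\alpha_0(\gamma)$ acts on $\pi_1(M_0)$ via $\rho_0(\gamma)$ and that the induced action on $H_1$ lifts to the universal cover. Invoking the main result of \cite{BRHW17} then yields a homeomorphism $h : M_0 \to M_0$ such that $h \circ \rho_0(\gamma) = \alpha_0(\gamma) \circ h$ for every $\gamma \in \Gamma_1$, which (up to convention) is the conjugacy asserted in the statement.

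Third, for the $C^\infty$ upgrade, once the action is topologically conjugate to an affine action containing an Anosov element and the ambient group is a higher rank lattice, the smooth classification machinery of \cite{KLZ} and the smooth part of \cite{BRHW17} applies: regularity of stable/unstable holonomies for the hyperbolic elements, combined with Journé's lemma and higher rank rigidity, upgrades $h$ to a $C^\infty$ diffeomorphism. The main obstacle in executing this outline is not dynamical but bookkeeping: one has to check that the finite-index/lifting passages needed to define $\rho_0$ and to apply \cite{BRHW17} can be chosen compatibly, so that the $\rho_0$ appearing in the statement really is the linear data of $\alpha_0$ restricted to $\Gamma_1$. All the genuine dynamics has already been done in Theorem~\ref{thm:main} and in the cited rigidity theorems.
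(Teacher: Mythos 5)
Your proposal is correct and follows essentially the same route as the paper: reduce to the torus $M_0$, apply the Brown--Rodriguez Hertz--Wang global rigidity theorem \cite{BRHW17} to get the topological conjugacy, and then upgrade to $C^\infty$ using the Anosov-rich structure of the action. Two places where the paper is more concrete than your sketch: (i) the lifting to the universal cover of $M_0$ is not simply ``arranged up to finite index'' by hand but is obtained from \cite[Proposition~9.7]{BRHW17}, which deduces the lift from the existence of an invariant measure; and (ii) for the smooth upgrade, the paper does not invoke \cite{KLZ} or Journ\'e's lemma directly but instead verifies the hypothesis that the linear data has no rank-one factor --- this is done by producing, via Prasad--Rapinchuk (Theorem~\ref{thm:PR}), a rank-two free abelian subgroup of $\Gamma_1$ consisting entirely of hyperbolic elements and citing \cite[Lemma~2.9]{RHW}, after which \cite{FKS}, \cite{RHW}, or \cite[Theorem~1.7]{BRHW17} give smoothness of $h$. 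These are exactly the ``bookkeeping'' points you flag, and they are the substantive parts of the argument beyond quoting \cite{BRHW17}; otherwise your outline matches the paper's proof.
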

 
As we discussed earlier, Corollary \ref{coro:main} also says that such $\alpha$ can not exist unless the unbounded group homomorphism $\rho_{0}$ exists. Indeed, the existence of an unbounded homomorphism from finite index subgroup of $\Gamma$ to $\SL(d,\Zbb)$ is restrict. More precisely, up to finite index and after passing finite quotient, $\Gamma$ should be isomorphic to $G(\Zbb)$ due to Margulis' normal subgroup theorem and superrigidity theorem. Here $G(\Zbb)$ is $\SL(n,\Zbb)$, $\Sp(2n,\Zbb)$, or $\SO(n,n,\Zbb)$ in the case of \textbf{SL}, \textbf{Sp}, or \textbf{SO}, respectively.

In Corollary \ref{coro:main}, $h$ indeed conjugates the entire $\alpha_{0}(\Gamma_{0})$ action on $M_{0}$ with an affine action on $M_{0}$. (for example, \cite[Lemma 6.8.]{MQ01}.) Especially, in the case of \textbf{SL}, the entire $\Gamma$ action is conjugate to an affine action. Therefore, Theorem \ref{thm:main} and Corollary \ref{coro:main} imply Corollary \ref{intro:coro} easily.

In our setting, we do not assume the topological structure on the manifold except the dimension condition and the existence of an invariant measure. Furthermore, we only assumed the weak form of hyperbolicity, which is about the existence of dominated splitting. Indeed, if the dominated splitting is given by $TM=E\oplus F$ then we may not have any hyperbolicity on either $E$ or $F$. This is a much weaker assumption than being an Anosov or even a partial hyperbolic diffeomorphism.

\subsection{Organization of the paper}
The paper is organized as follows. In Section \ref{sec:pre} we provide some settings and preliminaries for the proof. In Section \ref{sec:aux}, we prove algebraic properties in the case of \textbf{SL}, \textbf{Sp}, and \textbf{SO}. We prove that we can arrange any two subspaces into general position using $G$ on one subspace. This will be used in the proof of Theorem \ref{thm:main}. In Section \ref{sec:main}, we prove Theorem \ref{thm:main}. The main idea is comparing continuous data from dominated splittings and measurable data from superrigidity. We extend and modify the idea in \cite{KLZ}. In Section \ref{sec:rig}, we prove Corollary \ref{coro:main} based on previous global rigidity results such as \cite{BRHW17}.

\subsection*{Acknowledgement} The author deeply appreciates Aaron Brown, David Fisher, and Ralf Spatzier for fruitful discussions regarding the problem. Especially, the author thanks to Aaron Brown for bringing this problem and the reference \cite{BMAnosov} to the author's attention and gave lots of comments on early draft. The author also thanks to Dylan Thurston for useful discussions about infra-torus.

\section{Preliminaries}\label{sec:pre}

We will always be in cases of \textbf{SL, Sp,} and \textbf{SO}. Throughout whole paper, we fix a vector space $V=\Rbb^{d}$ and the standard inner product on it where $d$ will be the number in Notation \ref{globnota}. In the case of \textbf{Sp} and \textbf{SO}, we will put additional structure on it.  
\subsection{Settings} 
We fix some notations which will be used throughout the paper.

We will denote the frame bundle $P$ over a $d$ dimensional manifold $M$. We identify the fiber of the frame bundle $P$ at $x\in M$ as the set of linear isomorphisms $V\to T_{x}M$.

In all cases, there are at most $3$ homomorphisms from $G$ to $\GL(V)$, up to conjugation, namely the trivial, defining, and contragredient representation due to dimension condition. Throughout the whole paper, we denote trivial, defining, and contragredient representation as $\pi_{0}$, $\pi_{1}$, and $\pi_{2}$ respectively.

In the case of \textbf{SL}, $\pi_{1}$ and $\pi_{2}$ are not isomorphic representation.  Nevertheless, we may assume that $\pi_{1}(G)=\pi_{2}(G)=\SL(V)$ as $\SL(n,\Rbb)$ is invariant under transpose. 

In the case of \textbf{Sp}, $\pi_{1}$ and $\pi_{2}$ are isomorphic as representation. We put the symplectic form $\omega$ and the standard symplectic basis $\Bcal$ on $V$. More precisely, we fix basis $\Bcal$ and a symplectic form $\omega$ on $V$ as  \[\Bcal=\{e_{1},\dots, e_{n}, f_{1},\dots,f_{n}\},\]   \[\omega(e_{i},f_{j})=\delta_{ij}, \quad \omega(e_{i},e_{j})=\omega(f_{i},f_{j})=0,\]  for all $i,j\in\{1,\dots, n\}$. Here $\delta_{ij}$ is the Kronecker delta symbol. Without loss of generality, we may assume that $\pi_{1}=\pi_{2}$ and \[\pi_{1}(G)=\pi_{2}(G)=\Sp(V,\omega)\] where \[\Sp(V,\omega)=\left\{A\in\SL(V): \omega(Av,Aw)=\omega(v,w)\textrm{ for all } v,w\in V.\right\}.\]

In the case of \textbf{SO},  $\pi_{1}$ and $\pi_{2}$ are isomorphic as representation again. We put a signature $(n,n)$ quadratic form $Q$ and a basis $\Ccal$ on $V$ as \[\Ccal=\{x_{1},\dots, x_{n},y_{1},\dots, y_{n}\},\] \[Q\left(\sum_{i=1}^{n}\left( a_{i}x_{i}+b_{i}y_{i}\right)\right)=\sum_{i=1}^{n}a_{i}^{2}-\sum_{i=1}^{n}b_{i}^{2}.\] Without loss of generality, we may assume that \[\pi_{1}(G)=\pi_{2}(G)=\SO(V,Q)\] where \[\SO(V,Q)=\left\{A\in\SL(V): Q(Av)=Q(v)\textrm{ for all } v\in V.\right\}.\]

Note that in all cases, $\pi_{1}$ and $\pi_{2}$ are irreducible.

\subsection{Cocycle Superrigidity Theorem}\label{sec:csr}

In the cases of \textbf{SL} and \textbf{Sp}, we have that $\SL_{n}$ and $\Sp_{2n}$ are algebraically simply connected. In the case of \textbf{SO}, the algebraically universal cover is $Spin_{2n}$. As we assume that $n\ge 5$, there is only one non-trivial $2n$ dimensional representation up to conjugation which is the defining representation of $\textrm{Spin}_{2n}(\Cbb)$. Furthermore, this representation factors through $\SO(2n,\Cbb)$. Therefore, based on discussions in the previous section, the special case of the non-ergodic version of cocycle superrigidity theorem \cite[Theorem 4.5]{FMW} can be stated as:

\begin{theorem}[Cocycle superrigidity, non-ergodic case \cite{FMW}]\label{thm:ZCSR}

Let $\Gamma$, $G$ and $M$ be as in the case of \textbf{SL}, \textbf{Sp}, or \textbf{SO}. Let $P$ be a frame bundle over $M$. Then there is a measurable section $\sigma\colon M\to P$,
measurable $\Gamma$-invariant map $\iota\colon M\to \{0,1,2\}$, a compact subgroups $\kappa_{i}\subset\GL(V)$ and measurable cocycles $K_{i}\colon\Gamma\times \iota^{-1}(i)\to \kappa_{i}$ for $i=0,1,2$ such that 
	\begin{enumerate}
		\item for $\mu$ almost every $x$ and every $\gamma\in\Gamma$, \[D_{x}(\gamma)\sigma(x)=
		\sigma(\alpha(\gamma)(x))\pi_{\iota(x)}(\gamma)K_{\iota(x)}(\gamma,x)
		\]  
		\item $\pi_{i}(G)$ commutes with $\kappa_{i}$ for $i=0,1,2$.
	\end{enumerate}
	\end{theorem}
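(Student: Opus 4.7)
The plan is to apply the non-ergodic cocycle superrigidity theorem of \cite{FMW} to the derivative cocycle of $\alpha$, and to use the very restrictive list of low-dimensional linear representations of $G$ to pin down the output. Fix any measurable background trivialization of the frame bundle $P\to M$; with respect to it, the derivative of $\alpha$ is a measurable $\GL(V)$-valued cocycle $c\colon \Gamma\times M\to \GL(V)$ over the volume-preserving system $(M,\Vol,\alpha)$. In each case \textbf{SL}, \textbf{Sp}, \textbf{SO}, the group $G$ is a higher rank simple real Lie group with algebraic universal cover $\SL_{n}$, $\Sp_{2n}$, or $\Spin_{2n}$ respectively, so the hypotheses of \cite[Theorem 4.5]{FMW} are met. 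That theorem, applied to $c$, provides a $\Gamma$-invariant measurable partition of $M$ according to the isomorphism type of the superrigid datum; on each part one obtains a continuous representation $\pi\colon G\to \GL(V)$, a measurable trivialization $\sigma$, and a measurable cocycle $K$ landing in a compact subgroup of the centralizer $Z_{\GL(V)}(\pi(G))$, satisfying $D_{x}(\gamma)\sigma(x)=\sigma(\alpha(\gamma)x)\pi(\gamma)K(\gamma,x)$ for almost every $x$ and every $\gamma\in\Gamma$.

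The next step is to classify the possible $\pi$. In each of our cases, the minimal dimension of a nontrivial continuous irreducible representation of $G$ equals $d$, so any continuous $\pi\colon G\to \GL(V)$ (which has dimension $d$) is either trivial or itself irreducible of dimension exactly $d$; in particular, there is no room for a mixed rep with both a nontrivial and a trivial summand. In case \textbf{SL}, the only irreducible $n$-dimensional representations of $\SL(n,\Rbb)$ with $n\ge 3$ are the defining and its dual, which are non-isomorphic. In case \textbf{Sp}, the unique nontrivial irreducible representation of $\Sp(2n,\Rbb)$ of dimension at most $2n$ is the defining, which is self-dual. In case \textbf{SO}, the hypothesis $n\ge 5$ from Remark \ref{rmk:sohigh} ensures that the half-spin representations (of dimension $2^{n-1}\ge 16>2n$) do not compete and that triality does not occur, leaving only the self-dual defining representation as the nontrivial option. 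The isomorphism types of $\pi$ are thus exhausted by the three representations $\pi_{0},\pi_{1},\pi_{2}$ fixed in the setup.

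Finally, define $\iota\colon M\to\{0,1,2\}$ by sending $x$ to the index $i$ such that the representation attached to the part containing $x$ is isomorphic to $\pi_{i}$; measurability and $\Gamma$-invariance of $\iota$ follow from those of the partition. Setting $\kappa_{i}$ to be the closure of the image of $K$ on $\iota^{-1}(i)$ gives a compact subgroup of $Z_{\GL(V)}(\pi_{i}(G))$, which is condition (2); condition (1) is just the superrigidity cohomology identity rewritten with the index $\iota(x)$. The main work is therefore already contained in \cite{FMW}; the content specific to our setting is the dimension-based classification of $\pi$, and the most delicate point is case \textbf{SO}, where the hypothesis $n\ge 5$ is crucial to exclude triality and the exceptional low-rank isomorphisms flagged in Remark \ref{rmk:sohigh}.
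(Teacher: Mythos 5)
Your proposal is correct and follows essentially the same route as the paper: both treat the statement as a direct specialization of the non-ergodic cocycle superrigidity theorem \cite[Theorem 4.5]{FMW} applied to the derivative cocycle in a measurable trivialization of $P$, combined with the observation that the only $d$-dimensional representations of (the algebraic universal cover of) $G$ up to conjugacy are $\pi_{0},\pi_{1},\pi_{2}$, with $n\ge 5$ in case \textbf{SO} ruling out the half-spin representations. Your extra remarks (no mixed trivial-plus-nontrivial summand, boundedness of the compact error term in the centralizer) are consistent with the paper's subsequent discussion of $\kappa_{i}$ and the decomposition $\mu=\mu_{0}+\mu_{1}+\mu_{2}$.
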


Indeed, if we denote by $\{\mu_{i}\}_{i\in I}$ the ergodic decomposition of $\mu$ with respect to the $\Gamma$ action, then we have a partition of $I$ into $I_{0}$, $I_{1}$ and $I_{2}$ such that $\mu_{i}$ almost every $x$ we have $\iota(x)=i$ if and only if $i\in I_{i}$ for $i\in\{0,1,2\}$.  Note that $\iota^{-1}(i)$ is $\alpha(\Gamma)$ invariant measurable set for all $i\in\{0,1,2\}$. If we denote by \[\mu_{i}=\int_{j\in I_{i}} \mu_{j},\] for all $i\in \{0,1,2\}$ then we have decomposition of $\mu$ as \[\mu=\mu_{0}+\mu_{1}+\mu_{2}.\] Then the support of $\mu_{i}$, denoted by $X_{i}$, is a compact $\Gamma$-invariant subset in $M$ for all $i\in\{0,1,2\}$.

Furthermore,  as we know that any compact subgroup in $\GL(V)$ can be contained in $\textrm{O}(V)$ up to conjugation, we can assume that the compact subgroup $\kappa_{i}$ only depends on $\pi_{i}$.

In all cases, we have  \[Z_{\GL(d,\Rbb)}\left(\pi_{1}(G)\right)=Z_{\GL(d,\Rbb)}\left(\pi_{2}(G)\right)=\Rbb^{\times}\cdot I_{V}.\] Therefore, $\kappa_{1}$ and $\kappa_{2}$ are compact subgroups of $\Rbb^{\times}\cdot I_{V}$, so that they are subgroups of $\{\pm I_{V}\}$. Especially, $K_{1}$ and $K_{2}$ take values in $\{\pm I_{V}\}$.

In \cite[Theorem 4.5]{FMW}, the statement is not written in the bundle theoretic form. However, Theorem \ref{thm:ZCSR} can be directly deduced from it.

\subsection{Franks--Newhouse and Brin--Manning theorems}
In this section, we recall theorems about manifold admitting Anosov diffeomorphisms with certain properties. The first theorem due to Franks and Newhouse theorem states that if $M$ admits a codimension $1$ Anosov diffeomorphism, then it should be homeomorphic to torus (See \cite{Hir}, \cite{Newhouse} and \cite{FranksAnosov}):
\begin{theorem}[The Franks--Newhouse theorem]\label{thm:FNAnosov}
 If $f\colon M\to M$ is a codimension one Anosov diffeomorphism then $M$ is homeomorphic to a torus.
 
\end{theorem}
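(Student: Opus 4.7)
The plan is to follow the classical strategy of Franks, Newhouse, and Hirsch. Without loss of generality assume $\dim E^{s}=1$, so the stable foliation $W^{s}$ consists of curves and the unstable foliation $W^{u}$ has codimension one. The first observation, due to Hirsch, is that after possibly passing to a double cover so that the line bundle $E^{s}$ is orientable, each leaf of the lift $\widetilde{W^{u}}$ to the universal cover $\widetilde{M}$ is a properly embedded copy of $\Rbb^{n-1}$ which separates $\widetilde{M}$ into two components, and the leaf space $\widetilde{M}/\widetilde{W^{u}}$ is homeomorphic to $\Rbb$. This uses crucially that $E^{s}$ is one-dimensional, since it produces a transverse order on the leaves; the lift to $\widetilde{M}$ is needed to promote this local order to a global one.

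Next, following Newhouse, I would establish that $f$ is transitive. One shows that the non-wandering set $\Omega(f)$ is open, and uses the linear transverse order along stable arcs together with the density of periodic orbits in $\Omega(f)$ to rule out wandering points by a trapping/boundary argument on the leaf space. It then follows that the stable manifold of any point accumulates on every unstable leaf, so both foliations are minimal, $\Omega(f)=M$, and $f$ is topologically transitive.

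Given transitivity I would invoke Franks' theorem: using the $\Rbb$-parametrization of the unstable leaf space in $\widetilde{M}$ and the expanding action of $f$ on it, one constructs a semi-conjugacy from $\widetilde{M}$ onto $\Rbb^{n}$ intertwining $f$ with a hyperbolic linear map and identifies $\pi_{1}(M)$ with a discrete cocompact subgroup of $\Rbb^{n}$. The local product structure coming from the two transverse foliations then upgrades the semi-conjugacy to a homeomorphism $M\to \Tbb^{n}$, which is the desired conclusion.

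The main obstacle is the transitivity step: the delicate point is propagating local non-wandering behavior along the one-dimensional stable direction, controlling how leaves of $W^{u}$ intersect a transversal parametrized by $W^{s}$, to conclude that no wandering component can persist. The orientability reductions for $E^{s}$ and $E^{u}$ also require care to ensure the resulting base $M$ is the torus rather than an infra-torus in the codimension-one case. Once transitivity and the leaf-space parametrization are in hand, the remaining identification of $M$ with $\Tbb^{n}$ is essentially formal.
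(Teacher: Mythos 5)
The paper does not prove this statement; it is quoted as a classical result with citations to Hirsch, Newhouse, and Franks, and your outline is precisely the architecture of those cited proofs (Hirsch/Franks for the structure of the lifted codimension-one foliation and the global product structure $\widetilde{M}\cong\Rbb^{n}$, Newhouse for $\Omega(f)=M$, Franks for the conjugacy to a hyperbolic toral automorphism). So in that sense you are taking the same route the paper relies on.

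As a standalone proof, though, the two points you yourself flag as ``obstacles'' are exactly where all the substance lives, and they are not filled in. The transitivity step is the entire content of Newhouse's theorem: one must rule out a proper attractor in the spectral decomposition, and the argument is a genuinely delicate analysis of how the $(n-1)$-dimensional unstable leaves of a basic set accumulate along the one-dimensional stable direction; ``a trapping/boundary argument on the leaf space'' names the idea but does not constitute it. Second, your worry that the double-cover reduction for orientability of $E^{s}$ might only yield an infra-torus is legitimate but is resolved by an ingredient absent from your sketch: once one has the global product structure, each nontrivial deck transformation acts on the leaf space $\Rbb$ of $\widetilde{W^{u}}$ without fixed points (a deck transformation fixing a lifted leaf would restrict to a nontrivial deck transformation of a contractible leaf), so by H\"older's theorem $\pi_{1}(M)$ is abelian and torsion-free; combined with asphericity and Franks' homological computation this forces $\pi_{1}(M)\cong\Zbb^{n}$ and hence the torus on the nose, with no infra-torus ambiguity. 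Likewise, the semi-conjugacy to a hyperbolic linear map requires first producing that linear map from the action of $f_{*}$ on $H_{1}(M;\Zbb)\cong\Zbb^{n}$, which presupposes the fundamental group computation rather than following from the leaf-space parametrization alone. In short: correct roadmap, consistent with what the paper cites, but the proof of the theorem is exactly the two steps you deferred.
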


If we can control the dilation on stable and unstable distribution, then we can use the Brin--Manning theorem as follows. Let $M$ be a manifold and $f\colon M\to M$ be an Anosov diffeomorhpism. We can induce bounded linear operator $f_{*}$ on the Banach space of continuous vector field on $M$ as \[f_{*}X(x)=D_{f^{-1}(x)}(X(f^{-1}(x)))\] for continuous vector field $X$ on $M$. Then the spectrum of $f$ is contained in the interior of two annuli with the radius $0<\lambda_{1}<\lambda_{2}<1$ and $1<\mu_{2}<\mu_{1}<\infty$. 
\begin{theorem}[The Brin--Manning theorem, see \cite{BMAnosov}]\label{thm:BM}
Let $f\colon M\to M$ be an Anosov diffeomorphism on $M$. Let $\mu_{1},\mu_{2}$ and $\lambda_{1},\lambda_{2}$ be as above. Assume that \[1+\frac{\log\mu_{2}}{\log\mu_{1}}>\frac{\log\lambda_{1}}{\log\lambda_{2}},\] and \[1+\frac{\log\lambda_{2}}{\log\lambda_{1}}>\frac{\log\mu_{1}}{\log\mu_{2}}.\] Then $M$ is homeomorphic to a torus or a flat manifold.
\end{theorem}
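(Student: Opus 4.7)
The plan is to bootstrap the two bunching inequalities into $C^{1}$ regularity of the invariant foliations, then invoke classical classification results for Anosov diffeomorphisms with smooth invariant foliations, and finally use the pinching once more to collapse the resulting nilpotent structure down to an abelian one.

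First, I would observe that the two inequalities are spectral bunching conditions of exactly the type that controls the transverse regularity of dynamically defined distributions. By the $C^{r}$-section theorem of Hirsch--Pugh--Shub combined with Journ\'e's regularity lemma, such bunching forces both the stable distribution $E^{s}$ and the unstable distribution $E^{u}$ to be $C^{1}$, so that the stable and unstable foliations $\mathcal{W}^{s}$ and $\mathcal{W}^{u}$ are $C^{1}$ foliations with $C^{1}$ leaves. Roughly, the first inequality gives $C^{1}$ regularity of $E^{u}$ along $\mathcal{W}^{s}$ and the second gives the symmetric statement for $E^{s}$ along $\mathcal{W}^{u}$, after which Journ\'e's lemma promotes joint regularity to $C^{1}$ regularity of the distributions on all of $M$.

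Second, I would lift $f$ to the universal cover $\widetilde{M}$ and analyze the holonomy pseudogroup of the now-smooth stable and unstable foliations. Following the strategy of Brin and Manning, the $C^{1}$ regularity, together with Anosov contraction/expansion and topological transitivity, forces the holonomies of long loops in $\mathcal{W}^{s}$ and $\mathcal{W}^{u}$ to be severely constrained, which in turn yields a virtually nilpotent structure on $\pi_{1}(M)$ (realized as a lattice in a simply connected nilpotent Lie group acting on $\widetilde{M}$). By the theorems of Franks and Manning classifying Anosov diffeomorphisms with virtually nilpotent fundamental group, $f$ is then topologically conjugate to a hyperbolic automorphism of an infra-nilmanifold, and in particular $M$ is an infra-nilmanifold.

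Finally, the pinching is used one more time to force the nilpotent Lie algebra to be abelian: a nonzero bracket $[E^{s}, E^{u}]$ would produce weight vectors whose exponential rates are sums of those on $E^{s}$ and $E^{u}$, and these sums violate the bunching inequalities, so the commutator subalgebra must vanish. Hence $\widetilde{M} \simeq \mathbb{R}^{d}$ as a Lie group and $M$ is a flat manifold, i.e.\ either a torus or an infra-torus. The main obstacle is the middle step: translating $C^{1}$ regularity of the invariant foliations into virtual nilpotence of $\pi_{1}(M)$. Regularity of foliations alone is not enough to produce commutativity of holonomies; one must genuinely exploit the pinching-derived smoothness together with the Anosov expansion/contraction rates, and this delicate interplay is the technical heart of Brin and Manning's original argument.
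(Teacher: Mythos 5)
The first step --- deducing $C^{1}$ regularity of the invariant distributions from the two bunching inequalities via the Hirsch--Pugh--Shub $C^{r}$-section theorem --- is sound and is the correct launching point. The fatal gap is the middle step. You cite ``theorems of Franks and Manning classifying Anosov diffeomorphisms with virtually nilpotent fundamental group'' to conclude that $M$ is an infranilmanifold, but no such theorem exists: Franks's and Manning's classification results take it as a \emph{hypothesis} that $M$ is already a torus, nilmanifold, or infranilmanifold, and from there conclude topological conjugacy to a hyperbolic automorphism. Deducing that $M$ has infranilmanifold topology from virtual nilpotence of the fundamental group alone is essentially Smale's open classification conjecture for Anosov diffeomorphisms, not a theorem you can invoke. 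Moreover, you give no actual argument for the preceding claim that holonomy analysis of the $C^{1}$ foliations yields virtual nilpotence of the fundamental group of $M$; as you yourself acknowledge, that is precisely the technical heart of the matter, and it is left unaddressed.

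The final collapse step is also not secure even granting the middle one. Writing $a = -\log\lambda_{1}$, $b = -\log\lambda_{2}$, $c = \log\mu_{1}$, $d = \log\mu_{2}$, the two hypotheses amount to $a < 2b$ and $c < 2d$; these do rule out nonzero brackets inside $E^{s}$ or inside $E^{u}$, since such a bracket gives an eigenvalue of modulus $e^{-(e_{1}+e_{2})}$ with $e_{1}+e_{2}\ge 2b > a$, escaping the Mather spectrum. But they do not by themselves rule out nonzero mixed brackets of $E^{s}$ with $E^{u}$, whose eigenvalue moduli $e^{f-e}$ (with $e\in[b,a]$, $f\in[d,c]$) can land back inside the allowed annuli. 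The actual argument in \cite{BMAnosov} does not pass through an infranilmanifold and then collapse the nilpotence: once the $C^{1}$ regularity of the splitting is established, the pinching is used to control the growth of balls in the universal cover and to conclude flatness directly, bypassing the Anosov classification problem altogether. As written, your proposed proof has a genuine hole at the step you flagged, and the infranilmanifold detour is not the right route.
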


Note that if $M$ is a flat manifold then $M$ is an infra-torus or torus by Biebebach's theorem \cite{flat1, flat2}.
\section{Auxiliarily lemmas for SL, Sp, and SO}\label{sec:aux}

The proof of the main theorem requires some properties of $\SL(V,\omega)$, $\Sp(V,\omega)$ and $\SO(V,Q)$. In this section, we will prove such facts. We retain notations in the previous section. 

Firstly, we will use the following fact that is about normalizer in $\GL(V)$.
\begin{lem}\label{lem:Ferlin}
Let $H\subset \GL(V)$ be an real algebraic subgroup in $\GL(V)$. Let $G$ be either $\SL(V)$, $\Sp(V,\omega)$, or $\SO(V,Q)$. Assume that $H$ is normalized by $G$. Then $H$ contains $G$ or is contained in $\Rbb^{\times}\cdot\{I_{V}\}$.
\end{lem}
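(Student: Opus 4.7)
The plan is to pass to Lie algebras and exploit the decomposition of $\mathfrak{gl}(V)$ under the adjoint action of $G$. Since $H$ is real algebraic and normalized by $G$, its Lie algebra $\mathfrak{h}\subseteq\mathfrak{gl}(V)$ is $\Ad(G)$-invariant. In each of the three cases, $\mathfrak{gl}(V)$ decomposes as a direct sum of pairwise non-isomorphic irreducible $\Ad(G^0)$-modules
\[
\mathfrak{gl}(V)=\mathfrak{g}\oplus\mathfrak{p}\oplus\Rbb\cdot I_V,
\]
where $\mathfrak{g}$ denotes the Lie algebra of $G$ and $\mathfrak{p}$ is its complement inside the trace-zero endomorphisms: $\mathfrak{p}=0$ for $G=\SL(V)$, while $\mathfrak{p}$ is the space of traceless $\omega$-symmetric (respectively $Q$-symmetric) endomorphisms in the \textbf{Sp} and \textbf{SO} cases. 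This is the Cartan decomposition attached to the symmetric pair $(\SL(V),G)$, which is irreducible in all three settings.

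By $\Ad(G)$-invariance and multiplicity-freeness, $\mathfrak{h}$ must be a direct sum of some subcollection of these three summands. I would then use the identity $[\mathfrak{p},\mathfrak{p}]=\mathfrak{g}$, which is a consequence of the irreducibility of the symmetric pair, to eliminate the intermediate possibilities: if $\mathfrak{h}$ meets the $\mathfrak{p}$-isotypic component nontrivially, then by irreducibility of $\mathfrak{p}$ we get $\mathfrak{h}\supseteq\mathfrak{p}$, and then the Lie subalgebra condition forces $\mathfrak{h}\supseteq[\mathfrak{p},\mathfrak{p}]=\mathfrak{g}$. Hence either $\mathfrak{h}\supseteq\mathfrak{g}$ or $\mathfrak{h}\subseteq\Rbb\cdot I_V$.

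If $\mathfrak{h}\supseteq\mathfrak{g}$, the analytic subgroup of $H^0$ with Lie algebra $\mathfrak{g}$ is $G^0$, so $G^0\subseteq H$, and the full inclusion $G\subseteq H$ in the (possibly disconnected) \textbf{SO} case follows from the fact that $G$ normalizes $H$ by tracking a representative of $G/G^0$. If instead $\mathfrak{h}\subseteq\Rbb\cdot I_V$, then $H^0\subseteq\Rbb^\times\cdot I_V$, and I would finish by a cocycle argument. Since $H$ is real algebraic, $H/H^0$ is finite, so the continuous action of the connected group $G^0$ on it is trivial. For each $h\in H$ the map $g\mapsto ghg^{-1}h^{-1}$ therefore lands in $H^0\subseteq\Rbb^\times\cdot I_V$; writing $ghg^{-1}=\lambda(g)\,h$ and using that scalars commute with everything, a direct manipulation shows $\lambda\colon G^0\to\Rbb^\times$ is a continuous homomorphism. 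Since $G^0$ is a higher-rank simple Lie group, it is perfect, so $\lambda\equiv 1$, meaning $h\in Z_{\GL(V)}(G^0)=\Rbb^\times\cdot I_V$ by Schur's lemma applied to the absolutely irreducible defining representation.

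The main obstacle is the symmetric-pair bracket identity $[\mathfrak{p},\mathfrak{p}]=\mathfrak{g}$ in the \textbf{Sp} and \textbf{SO} cases. This is classical given irreducibility of the pair $(\SL(V),G)$, but if a self-contained argument is preferred one may verify it by choosing two explicit generators of $\mathfrak{p}$ whose commutator is a nonzero element of $\mathfrak{g}$; since $\mathfrak{g}$ is $\Ad(G)$-irreducible, any such element $\Ad(G)$-generates all of $\mathfrak{g}$, which is enough for the argument above.
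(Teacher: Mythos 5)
Your proposal is correct in substance, and it takes a genuinely different route from the paper: the paper gives essentially no proof of this lemma, deferring entirely to \cite[Lemma 2.5]{FeresJDG} with the remark that a direct calculation is possible in this explicit setting, whereas you supply a self-contained representation-theoretic argument. The ingredients all check out: $H$ normalized by $G$ makes $\mathfrak{h}$ an $\Ad(G^{0})$-submodule of $\mathfrak{gl}(V)$; the decomposition $\mathfrak{gl}(V)=\mathfrak{g}\oplus\mathfrak{p}\oplus\Rbb\cdot I_{V}$ is indeed multiplicity-free into pairwise non-isomorphic irreducibles in all three cases (for \textbf{Sp} and \textbf{SO} the summands have distinct dimensions, and irreducibility of $\mathfrak{p}$ survives complexification for $n\ge 2$, resp.\ $n\ge 5$); the identity $[\mathfrak{p},\mathfrak{p}]=\mathfrak{g}$ follows from simplicity of $\mathfrak{sl}(V)$, since $[\mathfrak{p},\mathfrak{p}]$ is an ideal of the simple algebra $\mathfrak{g}$ and cannot vanish (else $\mathfrak{p}$ would be a proper nonzero ideal of $\mathfrak{sl}(V)$); and the cocycle argument for the component group, together with perfectness of $G^{0}$ and Schur's lemma for the absolutely irreducible defining representation, correctly forces $H\subseteq\Rbb^{\times}\cdot I_{V}$ in the remaining case. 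This yields a proof the paper does not actually contain, which is a gain in completeness.

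The one step whose stated justification does not work is the promotion of $G^{0}\subseteq H$ to $G\subseteq H$ when $G=\SO(n,n)$ is disconnected. Knowing that $g\in G\setminus G^{0}$ normalizes $H$ and that $G^{0}\subseteq H$ only gives $gG^{0}g^{-1}=G^{0}\subseteq H$, which says nothing about whether $g$ itself lies in $H$; ``tracking a representative of $G/G^{0}$'' through the normalization cannot close this. The repair uses the hypothesis that $H$ is real algebraic: the algebraic group $\SO_{2n}$ is Zariski-connected and its real points are Zariski-dense in it, so the finite-index subgroup $\SO(n,n)^{0}$ is Zariski-dense in $\SO(n,n)$, and a Zariski-closed $H$ containing $G^{0}$ must therefore contain all of $G$. (For the paper's application only the branch $H\subseteq\Rbb^{\times}\cdot I_{V}$ is ever used, so nothing downstream is affected, but the lemma as stated needs this fix.)
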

Lemma \ref{lem:Ferlin} is a special case of the \cite[Lemma 2.5]{FeresJDG}. Indeed, one can prove by direct calculation as we are in explicit setting.

We will also need the following theorem which asserts the existence of nice subgroup $\Zbb^{n-1}$ in $\Gamma$.
In all cases, $G$ is a $\mathbb{R}$-split group. Denote the $\Rbb$-rank of $G$ as $\textrm{rk}_{\Rbb}(G)$. Note that \[\textrm{rk}_{\Rbb}(\SL(n,\Rbb))=n-1,\textrm{ and } \textrm{rk}_{\Rbb}(\Sp(2n,\Rbb))=\textrm{rk}_{\Rbb}(\SO(n,n))=n.\]  The following is a special case of the theorem by Prasad and Rapinchuk.
\begin{theorem}[Prasad--Rapinchuk, \cite{PR01}]\label{thm:PR}
 In all cases in Notation \ref{globnota}, there is a subgroup $\Gamma_{0}< \Gamma$ and a maximal $\Rbb$-split tori $A\simeq\Rbb^{\textrm{rk}_{\Rbb}(G)}$ in $G$ such that $\Acal=A\cap \Gamma_{0}\simeq \Zbb^{\textrm{rk}_{\Rbb}(G)}$ is a lattice in $A$. 
 \end{theorem}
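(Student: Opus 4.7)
The plan is to derive the statement as a specialization of the theorem of Prasad--Rapinchuk from \cite{PR01}, whose core content is the construction of $\Rbb$-regular semisimple elements in Zariski-dense subgroups of semisimple real Lie groups.

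First I would verify the Zariski density hypothesis. By the Borel density theorem, any lattice $\Gamma$ in the semisimple group $G$ (which in all our cases is $\Rbb$-split and has no compact factors) is automatically Zariski dense in $G$. This puts us in the setting to which \cite{PR01} applies.

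Next I would invoke Prasad--Rapinchuk. Their main theorem produces an $\Rbb$-regular semisimple element $\gamma\in\Gamma$, meaning that the connected component of its centralizer $Z_G(\gamma)^{\circ}$ is a maximal $\Rbb$-split torus of $G$, and the real absolute values of the eigenvalues of $\Ad(\gamma)$ on $\mathfrak{g}$ are as distinct as the root structure allows. Iterating this argument inside the centralizer (or more directly, applying the stronger form of the theorem in \cite{PR01} that produces rank-many commuting regular elements simultaneously) yields a free abelian subgroup of rank $r=\textrm{rk}_{\Rbb}(G)$ in $\Gamma$ whose elements lie in a common maximal $\Rbb$-split torus $A$ and whose logarithms in $\mathfrak{a}=\text{Lie}(A)\simeq\Rbb^{r}$ are $\Rbb$-linearly independent. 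Setting $\Gamma_{0}$ to be the subgroup of $\Gamma$ generated by these $r$ elements then gives $\Acal=\Gamma_{0}\cap A\simeq\Zbb^{r}$ as a cocompact lattice in $A$.

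The main obstacle is ensuring the $\Rbb$-linear independence of the logarithms, which is what distinguishes a genuine lattice in $A$ from merely a full-rank free abelian subgroup sitting in a proper $\Rbb$-subspace of $\mathfrak{a}$. For arithmetic $\Gamma$ this essentially reduces to a Dirichlet unit theorem-type computation on $G(\mathcal{O})\cap A$; the key contribution of \cite{PR01} is to handle also cocompact non-arithmetic lattices on the same footing, via a careful quantitative control of eigenvalue ratios that persists through Zariski-dense subgroups.
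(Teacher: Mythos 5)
The paper does not actually prove this statement; it is quoted verbatim as a special case of the cited result of Prasad--Rapinchuk, so there is no internal argument to compare against. Judged on its own terms, your sketch starts correctly: Borel density gives Zariski density of $\Gamma$ in $G$, and the Prasad--Rapinchuk theorem then supplies an $\Rbb$-regular semisimple element $\gamma\in\Gamma$ whose centralizer's identity component is a maximal torus $T$ of $G$ with $T(\Rbb)^{\circ}$ containing a maximal $\Rbb$-split torus $A\simeq\Rbb^{r}$ (here $A=T(\Rbb)^{\circ}$ up to compact factors, since $G$ is $\Rbb$-split in all three cases).

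The gap is in the passage from this single element to a rank-$r$ lattice in $A$. ``Iterating inside the centralizer'' is not available as stated: to rerun the argument inside $Z_{G}(\gamma)$ you would need to know that $\Gamma\cap Z_{G}(\gamma)$ is Zariski dense (or a lattice) there, which is exactly the point at issue; and \cite{PR01} does not directly hand you $r$ commuting regular elements with independent logarithms. The standard missing ingredient is the fact that for a semisimple $\gamma$ in a lattice $\Gamma$, the centralizer $Z_{\Gamma}(\gamma)=\Gamma\cap Z_{G}(\gamma)$ is itself a lattice in $Z_{G}(\gamma)$. In the present setting this follows from Margulis arithmeticity (all lattices in these higher-rank split groups are arithmetic) together with the Borel--Harish-Chandra theorem applied to the algebraic subgroup $Z_{G}(\gamma)$, which is defined over the relevant number field. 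Once you have that, $\Gamma\cap T(\Rbb)$ is a lattice in the abelian group $T(\Rbb)\simeq A\times(\text{compact})$, hence its projection to $A\simeq\Rbb^{r}$ is a cocompact lattice $\simeq\Zbb^{r}$, and you may take $\Gamma_{0}$ to be generated by preimages of a basis. Your closing appeal to ``a Dirichlet unit theorem-type computation'' is pointing at the right phenomenon, but as written it is an acknowledgment of the gap rather than a closure of it, and the suggestion that \cite{PR01} covers non-arithmetic cocompact lattices by ``quantitative control of eigenvalue ratios'' is not the mechanism actually needed here (non-arithmetic lattices do not occur in these groups).
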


Following sections, we prove that any two subspaces in $V$ can be made into general position by the action of $\Sp(V,\omega)$ or $\SO(V,Q)$ on one subspace. Recall that two subspace $W_{1}$ and $W_{2}$ are in general position if \[\dim(W_{1}\cap W_{2})=\max\{0,\dim W_{1}+\dim W_{2}-\dim V\}.\] Note that this property can be checked easily in the case of $\SL(V)$ as $\SL(V)$ acts transitively on every Grassmannian variety of $V$.

\subsection{Properties of \textbf{Sp}}  Note that $d=2n$ in this case. Recall that, in the case of \textbf{Sp}, we put the standard symplectic form $\omega$ and the standard symplectic basis \[\Bcal=\{e_{1},\dots, e_{n},f_{1},\dots, f_{n}\}\] on $V$. Also, $\Sp(V,\omega)$ is denoted by the symplectic group for the standard symplectic form $\omega$ on $V$. 

For any subspace $W<V$, We will denote \[W^{\perp}=\left\{v\in V: \omega(v,w)=0\textrm{ for all } w\in W.\right\}.\] Also, for any subspace $W<V$, the \emph{rank of $\omega$ on $W$} is defined as \[\textrm{rk}(\omega_{W})=\dim \left(W\cap W^{\perp}\right).\]  The following lemma can be checked directly by definition. 
\begin{lem}\label{lem:spsub} For any subspace $W<V$, the $\textrm{rk}(\omega_{W})$ is an even number. Moreover, for  $p,r\in \Nbb_{\ge 0}$, there is a subspace $W<V$ such that \[\dim W=p, \quad \textrm{rk}\left(\omega_{W}\right)=2r\] if and only if \[4r\le 2p\le 2r+2n.\]
\end{lem}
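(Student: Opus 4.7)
The plan is to reduce both assertions to the Darboux-type normal form for the restriction $\omega|_W$, working throughout with the decomposition of $W$ into its radical $W \cap W^{\perp}$ and a symplectic complement.

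For the parity statement, I would note that $\omega|_W$ is a skew-symmetric bilinear form on $W$ whose kernel, in the sense of bilinear forms, is exactly $W \cap W^{\perp}$. Hence $\omega|_W$ descends to a non-degenerate skew-symmetric form on the quotient $W/(W \cap W^{\perp})$. The familiar inductive Darboux construction (extracting hyperbolic pairs) forces any non-degenerate symplectic vector space to be even-dimensional, so $\textrm{rk}(\omega_W)$ is even.

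For necessity of the conditions, let $W$ have dimension $p$ and $\textrm{rk}(\omega_W) = 2r$, so that the radical has dimension $p - 2r \ge 0$, immediately giving $4r \le 2p$. For the upper bound $2p \le 2r + 2n$, I would choose a Darboux-type basis $u_1, v_1, \ldots, u_r, v_r, z_1, \ldots, z_{p - 2r}$ of $W$, where the $\{u_i, v_i\}$ form a symplectic basis for a symplectic complement of the radical and the $\{z_j\}$ span $W \cap W^{\perp}$. Then $\textrm{span}(u_1, \ldots, u_r, z_1, \ldots, z_{p-2r})$ is an isotropic subspace of $V$ of dimension $p - r$, and the classical bound that any isotropic subspace of a symplectic $2n$-space has dimension at most $n$ forces $p - r \le n$.

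For sufficiency, given $p, r$ with $2r \le p$ and $p - r \le n$, I would build $W$ explicitly from the standard basis $\Bcal$: let $W_1 = \textrm{span}(e_1, f_1, \ldots, e_r, f_r)$ be a symplectic subspace of dimension $2r$, and let $W_2 = \textrm{span}(e_{r+1}, \ldots, e_{p-r}) \subset W_1^{\perp}$ be an isotropic subspace of dimension $p - 2r$, which is available precisely because $p - r \le n$. A direct check then shows $W = W_1 \oplus W_2$ has dimension $p$ with $W \cap W^{\perp} = W_2$, giving $\textrm{rk}(\omega_W) = 2r$. I do not anticipate a serious obstacle; the only mildly delicate point is obtaining the sharp upper bound $p \le r + n$ rather than the weaker $p \le n + 2r$ that would follow from the isotropy bound applied to the radical alone, which requires enlarging the radical by the $u_i$'s to form a larger isotropic subspace.
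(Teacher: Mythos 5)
Your proof is correct, and it supplies an argument where the paper gives none: the paper simply asserts that Lemma \ref{lem:spsub} ``can be checked directly by definition,'' so there is no proof to compare against. Your route --- split $W$ into its radical $W\cap W^{\perp}$ and a symplectic complement, get parity from the nondegenerate form on the quotient, get the upper bound $p-r\le n$ by enlarging the radical with half of a Darboux basis of the complement to form a $(p-r)$-dimensional isotropic subspace, and exhibit $W=\mathrm{span}(e_{1},f_{1},\dots,e_{r},f_{r})\oplus\mathrm{span}(e_{r+1},\dots,e_{p-r})$ for sufficiency --- is exactly the standard Witt--Darboux argument the author presumably has in mind, and you correctly identify that the sharp bound requires the enlarged isotropic subspace rather than the radical alone. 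One caveat worth flagging: your proof (like the rest of the paper's use of this lemma, e.g.\ the explicit examples in Lemma \ref{lem:spgen}) reads $\textrm{rk}(\omega_{W})$ as the rank of the restricted form, i.e.\ $\dim W-\dim(W\cap W^{\perp})$, whereas the displayed definition in Section \ref{sec:aux} literally sets $\textrm{rk}(\omega_{W})=\dim(W\cap W^{\perp})$. Under the literal definition the parity claim fails already for $W=\mathrm{span}(e_{1})$, so the definition contains a typo and your reading is the intended one; it would be worth stating this correction explicitly rather than silently adopting it.
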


The next lemma says that if two subspaces have same dimension and rank, then we can find an element in $\Sp(V,\omega)$ so that it transforms one subspace to the other subspace.
\begin{lem}\label{lem:spact}
For any two subspaces $W_{1}, W_{2}<V$, if \[\dim W_{1}=\dim W_{2},\quad \textrm{rk}(\omega_{W_{1}})=\textrm{rk}(\omega_{W_{2}})\] then there is a $h\in \Sp(V,\omega)$ such that  \[hW_{1}=W_{2}.\]

\end{lem}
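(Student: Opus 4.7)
The strategy is to produce, for each $W_i$, a symplectic basis of $V$ in which $W_i$ appears as the span of a distinguished subset of basis vectors, where this subset depends only on $p=\dim W_i$ and $\textrm{rk}(\omega_{W_i})$. Since those invariants coincide for $W_1$ and $W_2$ by hypothesis, and $\Sp(V,\omega)$ acts transitively on symplectic bases of $V$, the element carrying the basis built for $W_1$ to the basis built for $W_2$ automatically satisfies $hW_1=W_2$.

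For a single subspace $W=W_i$ I would first decompose $W=W'\oplus R$, where $R=W\cap W^\perp$ is the radical of $\omega|_W$ (say $\dim R=r_0$) and $W'$ is any linear complement. Then $\omega|_{W'}$ is a nondegenerate skew form, so $\dim W'=p-r_0$ is even, say $2s$, and $W'$ admits a symplectic basis $\{a_1,b_1,\dots,a_s,b_s\}$. Next I would pick any basis $\{u_1,\dots,u_{r_0}\}$ of $R$ and run a symplectic Gram--Schmidt procedure inside $V$ to produce vectors $v_1,\dots,v_{r_0}\in V$ satisfying $\omega(u_j,v_k)=\delta_{jk}$, with the $v_k$ pairwise $\omega$-orthogonal and isotropic, and each $v_k$ also $\omega$-orthogonal to $W'$ and to the other $u_j$. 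Existence of such $v_k$ follows from the nondegeneracy of $\omega$ on $V$ together with the dimension bound of Lemma \ref{lem:spsub}, which ensures there is enough room outside $W$.

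The subspace $U:=W'\oplus \textrm{span}(u_1,v_1,\dots,u_{r_0},v_{r_0})$ is then a $(2s+2r_0)$-dimensional symplectic subspace, so its $\omega$-orthogonal complement $U^\perp$ is itself symplectic of dimension $2n-2s-2r_0$ and admits its own symplectic basis. Concatenating the two yields a full symplectic basis of $V$ in which $W$ is exactly the span of $\{a_j,b_j\}_{j=1}^{s}\cup\{u_j\}_{j=1}^{r_0}$. Performing the same construction for $W_2$ gives the analogous basis singling out the same indexed subset, and transitivity of $\Sp(V,\omega)$ on symplectic bases produces the desired $h\in\Sp(V,\omega)$ carrying $W_1$ onto $W_2$.

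The main technical obstacle lies in the symplectic Gram--Schmidt step: the duals $v_k$ must simultaneously be $\omega$-dual to the $u_j$ in a Kronecker-delta fashion, pairwise isotropic, and $\omega$-orthogonal to $W'$. I would handle this by first lifting each $u_j$ to a vector $v_j'\in V$ with $\omega(u_k,v_j')=\delta_{jk}$ (possible by nondegeneracy of $\omega$), and then modifying each $v_j'$ by an element of $W$ to kill the remaining unwanted pairings; the modification does not affect $\omega(u_k,v_j')$ because every $u_k$ lies in $R\subset W^\perp$.
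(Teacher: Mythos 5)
Your proof is correct and takes essentially the same route as the paper's: both rest on the normal form of a subspace of a symplectic space determined by $(\dim W,\ \dim(W\cap W^{\perp}))$ — the paper by citing the Witt--Artin decomposition and mapping its blocks to one another, you by constructing the corresponding adapted symplectic basis explicitly via symplectic Gram--Schmidt and using transitivity of $\Sp(V,\omega)$ on symplectic bases. The technical step you flag is handled correctly: corrections drawn from $W'$ kill the pairings with $W'$, corrections drawn from the radical kill the pairings among the $v_k$, and neither disturbs $\omega(u_j,\cdot)$ since each $u_j$ lies in $W^{\perp}$.
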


\begin{proof}
Assume that two subspaces $W_{1},W_{2}<V$ satisfy $\dim W_{1}=\dim W_{2}$ and $\textrm{rk}\left(\omega_{W_{1}}\right)=\textrm{rk}\left(\omega_{W_{2}}\right)$. We have Witt-Artin decomposition (see for example, \cite[Theorem 7.3.3]{DGp1}) that says if subspaces $H_{1}$ and $J_{1}$ that satisfy \[W_{1}= H_{1}\oplus\left(W_{1}\cap W_{1}^{\perp}\right),\quad W_{1}^{\perp}=J_{1}\oplus \left(W_{1}\cap W_{1}^{\perp}\right)\] then $H_{1}$ and $J_{1}$ are symplectic. Furthermore, we can $V$ can be decomposed into $\omega$-orthogonal direct sum \[V=H_{1}\oplus J_{1}\oplus (H_{1}\oplus J_{1})^{\perp}.\] Moreover, $W_{1}\cap W_{1}^{\perp}$ is a Lagrangian subspace in $(H_{1}\oplus J_{1})^{\perp}$. 

Similarly, for subspaces $H_{2}$ and $J_{2}$ that satisfy  \[W_{2}= H_{2}\oplus\left(W_{2}\cap W_{2}^{\perp}\right),\quad W_{2}^{\perp}=J_{2}\oplus \left(W_{2}\cap W_{2}^{\perp}\right)\] then $H_{2}$ and $J_{2}$ are symplectic. Furthermore, we can $V$ can be decomposed into $\omega$-orthogonal direct sum \[V=H_{2}\oplus J_{2}\oplus (H_{2}\oplus J_{2})^{\perp}.\] Moreover, $W_{2}\cap W_{2}^{\perp}$ is a Lagrangian subspace in $(H_{2}\oplus J_{2})^{\perp}$. 

Note that the above decomposition shows that we can decompose $\omega$ as \[\omega=\omega_{H_{1}}+\omega_{J_{1}}+\omega_{(H_{1}\oplus J_{1})^{\perp}} = \omega_{H_{2}}+\omega_{J_{2}}+\omega_{(H_{2}\oplus J_{2})^{\perp}}.\] Furthermore, by our assumptions about dimension and rank, \[\dim H_{1}=\dim H_{2}, \dim J_{1}=\dim J_{2}, \dim \left(W_{1}\cap W_{1}^{\perp}\right)= \dim \left(W_{2}\cap W_{2}^{\perp}\right).\]

Now we can find $h\in \Sp(V, \omega)$ such that \[h(H_{1})=H_{2}, h(J_{1})=J_{2},\textrm{ and } h\left(\left(H_{1}\oplus J_{1}\right)^{\perp}\right)=\left(H_{2}\oplus J_{2}\right)^{\perp}.\] Furthermore, as $W_{1}\cap W_{1}^{\perp}$ and $W_{2}\cap W_{2}^{\perp}$ are Lagrangians in $\left(H_{1}\oplus J_{1}\right)^{\perp}$ and $\left(H_{2}\oplus J_{2}\right)^{\perp}$ respectively, we can further require that $h$ satisfies \[h\left(W_{1}\cap W_{1}^{\perp}\right)=W_{2}\cap W_{2}^{\perp}.\] This implies that \[h(W_{1})=W_{2}.\] This proves the claim. \end{proof}

Next, we see that if there are two subspaces, then we can find subspaces that are in general position with same dimensions and ranks respectively.
\begin{lem}\label{lem:spgen}
Assume that $p,q\in\Nbb_{>0}$, $r,s \in \Nbb_{\ge 0}$ satisfies following conditions.
\begin{enumerate}
\item $4r\le 2p \le 2r+2n$ 
\item $4s\le 2q\le 2s+2n$
\item $p+q=2n$
\end{enumerate}
Then there are two subspaces $Y_{1},Y_{2}<V$ such that
\begin{enumerate}
\item $Y_{1}$ and $Y_{2}$ satisfy \[\textrm{rk}(\omega_{Y_{1}})=2r,\textrm{rk}(\omega_{Y_{2}})=2s, \dim Y_{1}=p, \dim Y_{2}=q,\] and
\item $Y_{1}$ and $Y_{2}$ are in general position.
\end{enumerate}
\end{lem}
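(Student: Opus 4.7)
The plan is to construct the pair $(Y_1, Y_2)$ explicitly from the standard symplectic basis $\Bcal = \{e_i, f_i\}_{i=1}^n$. By Lemmas \ref{lem:spsub} and \ref{lem:spact}, every subspace of $V$ with a given dimension and rank invariant lies in a single $\Sp(V,\omega)$-orbit, so it suffices to exhibit one concrete pair with the required invariants for which $Y_1 \cap Y_2 = 0$; the dimension count $\dim Y_1 + \dim Y_2 = p + q = 2n = \dim V$ makes this trivial intersection exactly the general-position condition.

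Write $\alpha = (p - 2r)/2$ and $\beta = (q - 2s)/2$, so that the hypothesis $p + q = 2n$ forces $\alpha + \beta + r + s = n$. By the symmetry of the statement under $(Y_1, p, r) \leftrightarrow (Y_2, q, s)$, we may assume without loss of generality that $r \le s$. Then I would take
\[
Y_1 = \mathrm{span}\{e_i, f_i : 1 \le i \le \alpha\} + \mathrm{span}\{e_i : \alpha + 1 \le i \le \alpha + 2r\},
\]
so that $\dim Y_1 = 2\alpha + 2r = p$ and a direct inspection gives $Y_1 \cap Y_1^\perp = \mathrm{span}\{e_{\alpha+1}, \ldots, e_{\alpha+2r}\}$, whence $\mathrm{rk}(\omega_{Y_1}) = 2r$.

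For $Y_2$ I would take a symplectic piece $H_2 = \mathrm{span}\{e_j, f_j : \alpha + 2r + 1 \le j \le \alpha + 2r + \beta\}$ of dimension $2\beta$ on ``fresh'' pairs (these pairs exist because $\alpha + 2r + \beta = n + r - s \le n$ under $r \le s$) together with an isotropic piece $R_2$ of dimension $2s$ that avoids $Y_1$. The naive candidates for $R_2$, namely the $f$-vectors at the partial positions $\alpha+1,\ldots,\alpha+2r$ together with a Lagrangian of the remaining untouched pairs, supply only $2r + (s-r) = r+s$ isotropic dimensions. To bridge the $s - r$ dimensional gap when $r < s$, one adjoins cross-pair \emph{twist} vectors of the form $e_i + f_j$ with $e_i$ in the radical $R_1 \subset Y_1$ and $f_j$ outside $Y_1$; such a twist is isotropic, lies outside $Y_1$ (its $f_j$-component does), and with disjoint index choices the twists are mutually $\omega$-orthogonal and $\omega$-orthogonal to $H_2$. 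Setting $Y_2 = H_2 + R_2$, one verifies by direct expansion in $\Bcal$ that $\dim Y_2 = q$, that $Y_2 \cap Y_2^\perp = R_2$ (so $\mathrm{rk}(\omega_{Y_2}) = 2s$), and that $Y_1 \cap Y_2 = 0$.

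The main obstacle is the combinatorial bookkeeping of the twist vectors in the case $r \ne s$: a purely basis-aligned construction always forces the two radical dimensions to coincide, and it is precisely the inclusion of cross-pair twists that allows the mismatched $r$ and $s$ to coexist while maintaining transversality of $Y_1$ and $Y_2$.
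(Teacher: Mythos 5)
There is a genuine gap, and it starts with the invariant you are targeting. Although the paper's displayed definition reads $\textrm{rk}(\omega_W)=\dim(W\cap W^\perp)$, the constraints in Lemma \ref{lem:spsub} ($2r\le p\le n+r$) and the base cases of the paper's own proof (e.g.\ $Y_1=\textrm{span}\{e_1\}$ with $p=1$, $\textrm{rk}(\omega_{Y_1})=0$) show that $\textrm{rk}(\omega_W)=2r$ must mean that the \emph{restriction} $\omega|_W$ has rank $2r$, i.e.\ $\dim(W\cap W^\perp)=p-2r$. Your $Y_1$ has a symplectic part of dimension $2\alpha=p-2r$ and a radical of dimension $2r$, so the two invariants are swapped: it satisfies $\textrm{rk}(\omega_{Y_1})=p-2r$, not $2r$. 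Concretely, for $n=3$, $p=4$, $r=2$ (allowed, since $4\le 4\le 5$) the lemma asks for a $4$-dimensional symplectic subspace, whereas your recipe calls for $\textrm{span}\{e_1,\dots,e_4\}$, which does not exist in $\Rbb^6$; and a $4$-dimensional subspace with $4$-dimensional radical is impossible since the radical is isotropic. Relatedly, $\alpha=(p-2r)/2$ need not be an integer ($n=3$, $p=5$, $r=2$ satisfies the hypotheses), and the requirement $\alpha+2r\le n$ does not follow from $p\le n+r$. The correct aligned model is the opposite one: symplectic part $\textrm{span}\{e_i,f_i: i\le r\}$ of dimension $2r$ plus isotropic part of dimension $p-2r$, which fits inside $V$ precisely because $p-r\le n$.

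Even after that repair, the construction of $Y_2$ is where the lemma's actual content lies, and your sketch does not go through. Test it on $n=2$, $p=q=2$ with $Y_1=\textrm{span}\{e_1,f_1\}$ and $Y_2$ required to be Lagrangian: your sources of isotropic vectors are the duals of the radical of $Y_1$ (empty here) and twists $e_i+f_j$ with $e_i$ in that radical (also empty), so the recipe yields only a $1$-dimensional $R_2$ where a $2$-dimensional one transverse to $Y_1$ is needed (the paper uses $\textrm{span}\{e_2-e_1,f_2+f_1\}$). In general one must twist against the \emph{symplectic} part of $Y_1$ as well, and verifying both $Y_2\cap Y_2^\perp=R_2$ and $Y_1\cap Y_2=0$ is exactly the bookkeeping you defer. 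The paper avoids a single global formula by inducting on $n$: it peels off a hyperbolic plane (or two) per step and checks a finite list of terminal cases by hand. Your direct-construction strategy is viable, but as written it neither produces the right invariants nor completes the transversality argument.
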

\begin{proof}
We use the induction on $n$ for the above claim. For $n=2$, it is enough to check the following cases without loss of generality. 
\begin{enumerate}
\item When $p=1$, $2r=0$, $q=3$, and $2s=2$, we can take \[Y_{1}=\textrm{span}\{e_{1}\}, Y_{2}=\textrm{span}\{f_{1},e_{2},f_{2}\}.\]
\item When $p=2$, $2r=2$, $q=2$, and $2s=2$, we can take \[Y_{1}=\textrm{span}\{e_{1},f_{1}\}, Y_{2}=\textrm{span}\{e_{2},f_{2}\}.\]
\item When $p=2$, $2r=2$, $q=2$, and $2s=0$, we can take \[Y_{1}=\textrm{span}\{e_{1},f_{1}\}, Y_{2}=\textrm{span}\{e_{2}-e_{1},f_{2}+f_{1}\}.\]
\item When $p=2$, $2r=0$, $q=2$, and $2s=0$, we can take \[Y_{1}=\textrm{span}\{e_{1},e_{2}\}, Y_{2}=\textrm{span}\{f_{1},f_{2}\}.\]

\end{enumerate}
For notational convention, let $V^{n}$ be the $2n$ dimensional vector space with standard symplectic form and basis \[\{e_{1},\dots, e_{n},f_{1},\dots, f_{n}\}.\] Let $V^{n-1}$ be the symplectic subspace that is spanned by \[\{e_{1},\dots, e_{n-1},f_{1},\dots, f_{n-1}\}.\] Let $p,q,r,s$ satisfies the above assumptions for $n$, i.e. $p+q=2n$, $2r\le p\le r+n$, and $2s\le q\le s+n$. If $p>2r$ and $q>2s$ then we can find subspace $Y_{1}'$ and $Y_{2}'$ such that 
\[\textrm{rk}(\omega_{Y_{1}'})=2r,\textrm{rk}(\omega_{Y_{2}'})=2s, \dim Y_{1}'=p-1, \dim Y_{2}'=q-1,\]
and they are in general position as subspaces in $V^{n-1}$ by induction hypothesis. Then we can take $Y_{1}$ and $Y_{2}$ as \[Y_{1}=Y_{1}'\oplus \Rbb\{e_{n}\}, Y_{2}=Y_{2}'\oplus \Rbb\{f_{n}\}.\]
If $p=2r$ and $q>2s>0$ then we can find subspaces $Y_{1}'$ and $Y_{2}'$ such that 
\[\textrm{rk}(\omega_{Y_{1}'})=2r,\textrm{rk}(\omega_{Y_{2}'})=2s-2, \dim Y_{1}'=p, \dim Y_{2}'=q-2,\]
and they are in general position. Then we can take $Y_{1}$ and $Y_{2}$ as \[Y_{1}=Y_{1}', Y_{2}=Y_{2}'\oplus \Rbb\{e_{n},f_{n}\}.\] 
Assume that $p=2r$ and $q=2s$. Note that in this case $r+s=n$.  Then we can take \[Y_{1}=\textrm{span}\{e_{1},\dots,e_{r},f_{1},\dots, f_{r}\},Y_{2}=\textrm{span}\{e_{r+1},\dots, e_{r+s},f_{r+1},\dots, f_{r+s}\}.\] 
Finally, assume that $p=2r$, $q>0$ and $2s=0$. This implies that $q\le n$ so that $n\le p$, equivalently $n-r\le r$. Then we can take \[Y_{1}=\textrm{span}\{e_{1},\dots,e_{r},f_{1},\dots, f_{r}\},\]\[Y_{2}=\textrm{span}\{e_{r+1}-e_{1},\dots, e_{n}-e_{n-r},f_{r+1}+f_{1},\dots, f_{n}+f_{n-r}\}.\] 
The above case by case arguments and induction show that we proved the lemma. \end{proof}
Finally, we can deduce the following corollary from Lemma \ref{lem:spsub}, \ref{lem:spact} and \ref{lem:spgen}.
\begin{coro}\label{coro:spgoal}
For any two subspaces $W_{1},W_{2}<V$, we can find $h\in \Sp(V,\omega)$ such that $hW_{1}$ and $W_{2}$ are in general position. More precisely, we can find $h\in \Sp(V,\omega)$ such that 
\begin{enumerate} 
\item $hW_{1}\cap W_{2}=0$ if $\dim W_{1}+\dim W_{2}\le 2n$, 
\item $V=hW_{1}+W_{2}$ if $\dim W_{1}+\dim W_{2} \ge 2n$.
\end{enumerate}
\end{coro}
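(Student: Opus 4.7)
The plan is to reduce both assertions to Lemma \ref{lem:spgen}, which handles the case of complementary dimensions, and to use Lemma \ref{lem:spact} to transport model subspaces to the given ones. Write $p = \dim W_1$, $q = \dim W_2$, $2r = \textrm{rk}(\omega_{W_1})$, and $2s = \textrm{rk}(\omega_{W_2})$, so by Lemma \ref{lem:spsub} these satisfy $4r \le 2p \le 2r + 2n$ and $4s \le 2q \le 2s + 2n$.

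For (1), assume $p + q \le 2n$. I would first enlarge $W_2$ to a subspace $\widetilde{W}_2 \supset W_2$ of dimension $2n - p$; such an extension exists for dimensional reasons, and by choosing the added vectors from $W_2^{\perp}$ (a symplectic subspace there) versus in $\omega$-general position to $W_2$ (adjusting for parity) one can arrange that the rank $2\widetilde{s} = \textrm{rk}(\omega_{\widetilde{W}_2})$ is admissible in the sense of Lemma \ref{lem:spsub}, i.e.\ $4\widetilde{s} \le 2(2n-p) \le 2\widetilde{s} + 2n$. Applying Lemma \ref{lem:spgen} to the parameters $(p, 2n-p, r, \widetilde{s})$ produces subspaces $Y_1, \widetilde{Y}_2 \subset V$ in general position, which in this dimension regime forces $Y_1 \cap \widetilde{Y}_2 = 0$. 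By Lemma \ref{lem:spact} there exists $g \in \Sp(V,\omega)$ with $g(\widetilde{Y}_2) = \widetilde{W}_2$, so that $g(Y_1) \cap \widetilde{W}_2 = 0$. Applying Lemma \ref{lem:spact} once more to the pair $W_1$ and $g(Y_1)$ (which share dimension $p$ and $\omega$-rank $2r$) yields $h \in \Sp(V,\omega)$ with $h(W_1) = g(Y_1)$, whence $hW_1 \cap W_2 \subset hW_1 \cap \widetilde{W}_2 = 0$.

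Assertion (2), when $p + q \ge 2n$, is handled dually: I would shrink $W_1$ to a subspace $W_1' \subset W_1$ of dimension $2n - q$ carrying some admissible $\omega$-rank $2r'$, then apply Lemma \ref{lem:spgen} together with Lemma \ref{lem:spact} exactly as above to obtain $h \in \Sp(V,\omega)$ with $h(W_1') \oplus W_2 = V$, and conclude $hW_1 + W_2 = V$ from $hW_1 \supset h(W_1')$.

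The main obstacle is the bookkeeping required to verify that the enlargement in (1) and the restriction in (2) can always be carried out compatibly with the admissibility constraints of Lemma \ref{lem:spsub}. Concretely, one must exhibit a valid intermediate rank $2\widetilde{s}$ (respectively $2r'$) for each admissible input $(p, q, r, s)$, which reduces to a brief case analysis based on the inequalities $4r \le 2p \le 2r + 2n$ together with the parity requirement that $\textrm{rk}(\omega_W)$ is always even. Once this rank bookkeeping is settled, the three preceding lemmas combine cleanly to yield the corollary.
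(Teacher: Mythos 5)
Your argument is correct and follows essentially the same route as the paper: reduce to the complementary-dimension case of Lemma \ref{lem:spgen} by enlarging or shrinking one of the subspaces, then transport the model configuration with Lemma \ref{lem:spact}. The rank bookkeeping you flag as the main obstacle is automatic rather than a genuine case analysis: any subspace of $V$ of dimension $2n-p$ (resp.\ $2n-q$) has admissible parameters by the forward direction of Lemma \ref{lem:spsub}, so any enlargement of $W_{2}$ (resp.\ restriction of $W_{1}$) to the complementary dimension works.
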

\begin{proof}
Denote $\dim W_{1}=s$ and $\dim W_{2}=k$.

If $k,s \ge n$ then we can find $n$-dimensional subspace $X_{1}<W_{1}$ and $X_{2}<W_{2}$. If $k,s\le n$ then we can find $n$-dimensional subspace $X_{1}>W_{1}$ and $X_{2}>W_{2}$. If $k\le n\le s$ and $k+s \ge 2n$ then we can find $n$-dimensional subspace $X_{1}<W_{1}$ and $X_{2}>W_{2}$. Finally, if $k\le n\le s$ and $k+s \le 2n$, then we can take $X_{1}=W_{2}$ and $2n-k$ dimensional subspace $X_{2}$ such that $X_{2} >W_{1}$. For the $s\le n\le k$ case, we can say similarly.

In all cases, using Lemma \ref{lem:spact} and \ref{lem:spgen}, one can find $h\in \Sp(V,\omega)$ such that $hX_{1}$ and $X_{2}$ are in general position. This implies that $hW_{1}$ and $W_{2}$ are in general position.
\end{proof}

\subsection{Properties of \textbf{SO}} 
Note that $d=2n$ in this case. Recall that, in the case of \textbf{SO}, we put a basis \[\Ccal=\{x_{1},\dots, x_{n},y_{1},\dots, y_{n}\}\] and a quadratic form $Q$ \[Q\left(\sum_{i=1}^{n}\left( a_{i}x_{i}+b_{i}y_{i}\right)\right)=\sum_{i=1}^{n}a_{i}^{2}-\sum_{i=1}^{n}b_{i}^{2}\] on $V$. Also, $\SO(V,Q)$ is denoted by the group of linear isomorphisms on $V$ that preserves $Q$. We call that an element in $\SO(V,Q)$ as \emph{isometry}.

For any subspace $W<V$, we denote the signature of the restricted quadratic form $Q|_{W}$ on $W$ as \[\signat(W)=(l,p,q)\] when $Q|_{W}$ admits orthogonal diagonalization \[Q|_{W}(x_{1},\dots, x_{l+p+q})= x_{l+1}^{2}+\dots +x_{l+p}^{2}-x_{l+p+1}^{2}-\dots -x_{l+p+q}^{2},\] for some basis in $W$. For instance, \[\signat(V)=(0,n,n).\] Note that the Sylvester's law of inertia says that the signature is the well-defined invariant for the quadratic form. (See \cite[Theorem 6.8]{Jac1}.)  

Recall that the quadratic form $Q$ gives bilinear symmetric form $B$ on $V$ as \[B(x,y)=Q(x+y)-Q(x)-Q(y),\] for all $x,y\in V$. For any subspace $W$ in $V$, we can define the perp of $W$ as \[W^{\perp}=\{v\in V: B(v,w)=0\textrm{ for all } w\in W.\}.\] Also, we denote \emph{orthogonal direct sum} of two subspaces $W_{1}$ and $W_{2}$ in $V$ as \[W_{1}\oplus W_{2}=W_{1}\perp W_{2}\] when $B(w_{1},w_{2})=0$ for all $w_{1}\in W_{1}$ and $w_{2}\in W_{2}$. 

The following lemma characterizes the possible signatures. 
\begin{lem}\label{lem:sosub} Let $W<V$ be a subspace of $V$. Let $\signat(W)=(l,p,q)$. Then \begin{enumerate}
\item $l+p\le n$, $l+q\le n$, and
\item $\dim W=l+p+q$.
\end{enumerate}
Conversely, assume that a triple of numbers $(l,p,q)$ satisfies $l,p,q \ge 0$, $l+p\le n$, $l+q \le n$, and $l+p+q \le 2n$. Then we can find a subspace $W$ in $V$ such that $\signat(W)=(l,p,q)$.
\end{lem}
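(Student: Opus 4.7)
The plan is to identify $l$ with the dimension of the radical $R := W \cap W^\perp$ of the restricted bilinear form $B|_W$, and then use Witt decomposition to pass to the non-degenerate quotient $R^\perp / R$.

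For the forward direction, I would fix a basis of $W$ realizing the diagonal form in the definition; its first $l$ vectors span the kernel of $B|_W$, which equals $R = W \cap W^\perp$, so $\dim R = l$ and $R$ is totally isotropic in $V$. Since $V$ has signature $(n,n)$, the standard theory of hyperbolic planes provides a complementary totally isotropic subspace $R' \subset V$ of dimension $l$ with $R \oplus R'$ hyperbolic, and $R^\perp = R \oplus (R \oplus R')^\perp$, so that $R^\perp / R$ carries a non-degenerate quadratic form of signature $(n-l,\, n-l)$. As $R \subset W^\perp$, we have $W \subset R^\perp$, and $W / R$ is a non-degenerate subspace of $R^\perp / R$ of signature $(p,q)$. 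This forces $p \le n-l$ and $q \le n-l$, and $\dim W = l+p+q$ is immediate from the chosen basis.

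For the converse, I would give an explicit construction using the standard basis $\Ccal$. Given $(l,p,q)$ with $l+p \le n$ and $l+q \le n$, take
\[
W = \textrm{span}\bigl\{\, x_1,\ldots,x_p,\ y_1,\ldots,y_q,\ x_{p+1}+y_{q+1},\ \ldots,\ x_{p+l}+y_{q+l}\,\bigr\}.
\]
The indices are valid precisely because $p+l \le n$ and $q+l \le n$. Using the relations $B(x_i,x_j) = 2\delta_{ij}$, $B(y_i,y_j) = -2\delta_{ij}$, $B(x_i,y_j) = 0$ coming from the definition of $B$, a direct computation shows that the vectors $x_{p+k}+y_{q+k}$ are pairwise orthogonal, each of zero $Q$-norm, and orthogonal to all the $x_i$ and $y_j$ listed; meanwhile $Q$ is positive definite on $\textrm{span}\{x_1,\ldots,x_p\}$ and negative definite on $\textrm{span}\{y_1,\ldots,y_q\}$. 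Hence $W$ has dimension $l+p+q$ and signature $(l,p,q)$.

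The only conceptual content is the identification $l = \dim(W \cap W^\perp)$ and the signature computation for $R^\perp / R$, both of which are standard consequences of Witt's theorem on extending a totally isotropic subspace to a sum of hyperbolic planes; I do not anticipate a substantive obstacle, and note that the condition $l+p+q \le 2n$ in the hypothesis is in fact redundant, being implied by $l+p \le n$ and $l+q \le n$.
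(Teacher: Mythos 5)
Your proof is correct and follows essentially the same route as the paper's: both identify $l$ with $\dim(W\cap W^{\perp})$, invoke Witt's extension of this totally isotropic radical to a sum of hyperbolic planes, and conclude with the bound on definite subspaces of a signature-$(n,n)$ space, while your explicit $W$ in the converse is the paper's construction up to reindexing. The only cosmetic difference is that you pass to the quotient $R^{\perp}/R$ of signature $(n-l,n-l)$, whereas the paper enlarges $W$ to a non-degenerate subspace $W\oplus U$ of signature $(0,l+p,l+q)$ inside $V$ --- dual formulations of the same inequality.
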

\begin{proof} 
Let $W<V$ be a subspace with $\signat(W)=(l,p,q)$. If $l=0$ and $p >n$ then we can find $(n+1)$ linearly independent vectors $w_{1},\dots, w_{n+1}\in W$ such that $Q(w_{i})=1$. However, as $\signat(V)=(0,n,n)$, this gives contradiction. Same arguments hold for $q$. 

When $l \ge 1$, denote $\textrm{rad}(W)=W\cap W^{\perp}$ and fix a basis $\{w_{1},\dots, w_{r}\}$ of $\textrm{rad}(W)$. Write $W=W'\oplus \textrm{rad}(W)$ for some $W'$. Then we can find a subspace $U$ and a basis $z_{1},\dots, z_{r}$ of $U$ satisfies following conditions. (See \cite[Theorem 6.11]{Jac1}.)
\begin{enumerate}
\item the restricted quadratic form $Q|_{W\oplus U}$ is non-degenerate,
\item $\dim \textrm{rad}(W)=\dim U=r$,
\item For each $i\in\{1,\dots,r\}$, a pair $(z_{i},w_{i})$ spans hyperbolic plane $H_{i}$, and \[W\oplus U=W'\perp H_{1}\perp\dots \perp H_{r}.\]
\end{enumerate}
Recall that a two dimensional subspace is called hyperbolic plane if the restriction of $Q$ on it is non-degenerate and it contains a vector $u$ so that $Q(u)=0$.
This implies that \[\signat(W\oplus U)= (0,l+p,l+q).\] As $W\oplus U$ is a subspace of $V$, the arguments for $l=0$ case can be applied for $W\oplus U$ so that we can deduce $l+p, l+q \le n$.

Conversely, if we have triple numbers $(l,p,q)$ as in the statement, then we can just take subspace $W$ as \[W=\textrm{span}\{x_{1}+y_{1},\dots, x_{l}+y_{l},x_{l+1},\dots, x_{l+p},y_{l+1},\dots, y_{l+q}\}.\] \end{proof}

The next lemma says that if two subspaces have same signature, then we can find an element in $\SO(V,Q)$ so that it transforms one subspace to the other subspace.
\begin{lem}\label{lem:soact}
For any two subspaces $W_{1}, W_{2}<V$, if \[\signat(W_{1})=\signat(W_{2})\] then there is a $h\in \SO(V,\omega)$ such that  \[hW_{1}=W_{2}.\]

\end{lem}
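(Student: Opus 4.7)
The plan is to prove this via the orthogonal Witt extension theorem, mirroring the symplectic argument of Lemma \ref{lem:spact}. For each $i \in \{1, 2\}$ I would apply the Witt--Artin decomposition already used in the proof of Lemma \ref{lem:sosub}: write $W_i = W_i' \oplus \rad(W_i)$ with $W_i'$ nondegenerate, and find an isotropic subspace $U_i$ whose chosen basis pairs with a chosen basis of $\rad(W_i)$ into hyperbolic planes $H_1^{(i)}, \dots, H_l^{(i)}$, yielding the orthogonal decomposition
\[
V = W_i' \perp H_1^{(i)} \perp \cdots \perp H_l^{(i)} \perp R_i, \qquad R_i := (W_i \oplus U_i)^\perp.
\]

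The hypothesis $\signat(W_1) = \signat(W_2) = (l, p, q)$ forces $\signat(W_i') = (0, p, q)$ and the same number $l$ of hyperbolic planes; applying Sylvester's law of inertia to $V$ then forces $\signat(R_1) = \signat(R_2) = (0, n - l - p, n - l - q)$. Thus the three building blocks match up isometrically across $i = 1$ and $i = 2$. I would pick block-by-block isometries $W_1' \to W_2'$, $H_j^{(1)} \to H_j^{(2)}$ (sending the chosen basis of $\rad(W_1)$ onto that of $\rad(W_2)$), and $R_1 \to R_2$, and assemble them into a single orthogonal map $h_0 \in \OO(V, Q)$ with $h_0(W_1) = W_2$.

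Finally, if $\det h_0 = -1$ I would compose $h_0$ with a determinant $-1$ element of $\OO(V, Q)$ stabilizing $W_2$ -- generically a reflection $r_v$ in a nonisotropic vector $v \in W_2 \cup W_2^\perp$, which preserves $W_2$ setwise and has determinant $-1$, so that $r_v \circ h_0 \in \SO(V, Q)$ still sends $W_1$ to $W_2$. I expect the main obstacle to be the edge case in which $W_2$ is a maximal isotropic subspace of dimension $n$, so that neither $W_2$ nor $W_2^\perp = W_2$ contains any nonisotropic vector; there one has to invoke extra freedom inside the block assembly -- for instance by modifying the chosen isometry $R_1 \to R_2$ by a reflection when $R_2 \neq 0$, or by arguing directly inside the parabolic stabilizer of $W_2$ to produce the sign flip -- to force $h_0 \in \SO(V, Q)$ from the outset.
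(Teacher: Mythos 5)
Your route is essentially the paper's: the paper also deduces from $\signat(W_1)=\signat(W_2)$ that there is an isometry $W_1\to W_2$ and then invokes Witt's extension theorem, and your block-by-block assembly via the Witt--Artin decomposition is just an explicit version of that extension. You are in fact more careful than the paper on the one real subtlety, namely that Witt extension only produces an element of $\OO(V,Q)$, not of $\SO(V,Q)$; your reflection trick ($r_v$ for an anisotropic $v\in W_2\cup W_2^{\perp}$, which preserves $W_2$ and has determinant $-1$) does repair the determinant in every case except the one you single out, since $W_2$ and $W_2^{\perp}$ are both totally isotropic only when $W_2=W_2^{\perp}$ is maximal isotropic.

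That remaining case, however, is not a repairable technicality: there the statement itself is false, so neither of your proposed fixes can succeed. If $W_2$ is maximal isotropic, write $V=W_2\oplus U_2$ with $U_2$ isotropic and the pairing $B\colon W_2\times U_2\to\Rbb$ nondegenerate; any $g\in\OO(V,Q)$ with $gW_2=W_2$ is block upper triangular with diagonal blocks $A$ and $(A^{t})^{-1}$, hence $\det g=1$. So $\Stab_{\OO(V,Q)}(W_2)\subset\SO(V,Q)$ --- there is no sign flip available in the parabolic --- and since $\OO(V,Q)$ acts transitively on maximal isotropics, $\SO(V,Q)$ has exactly two orbits on them (distinguished by $\dim(W\cap W')\bmod 2$). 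For instance $W_1=\mathrm{span}\{x_1+y_1,\dots,x_n+y_n\}$ and $W_2=\mathrm{span}\{x_1-y_1,x_2+y_2,\dots,x_n+y_n\}$ both have signature $(n,0,0)$ but are not $\SO(V,Q)$-equivalent. The paper's own proof has the same gap (it silently upgrades the Witt extension from $\OO$ to $\SO$); the lemma needs to exclude, or treat separately, the maximal isotropic case. The downstream use in Corollary \ref{coro:sogoal} only requires putting subspaces in general position and can be arranged to avoid this configuration, but as stated the lemma --- and your proof of it --- fails there.
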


\begin{proof}
Let $W_{1}$ and $W_{2}$ be subspaces in $V$ with same signature. As they have same signature, we can find a linear isometry $h$ between $W_{1}$ and $W_{2}$. Then the Witt's extension theorem says that $h$ can be extended to a linear isometry on $V$, i.e. an element in $\SO(V,Q)$. (See \cite[p.369]{Jac1}.) That means that we can find $h\in \SO(V,Q)$ such that $hW_{1}=W_{2}$ as we desired.
 \end{proof}

Next, we see that if there are two subspaces, then we can find subspaces that are in general position with same signatures respectively.
\begin{lem}\label{lem:sogen}
Assume that $l_{1},p_{1},q_{1}, l_{2}, p_{2},q_{2}\in\Nbb_{>0}$ satisfies following conditions.
\begin{enumerate}
\item $l_{1}+p_{1}, l_{1}+q_{1}\le n$,
\item $l_{2}+p_{2}, l_{2}+q_{2}\le n$, and
\item $l_{1}+p_{1}+q_{1}+l_{2}+p_{2}+q_{2}=2n$.
\end{enumerate}
Then there are two subspaces $Y_{1},Y_{2}<V$ such that
\begin{enumerate}
\item $Y_{1}$ and $Y_{2}$ satisfy \[\signat(Y_{1})=(l_{1},p_{1},q_{1}), \quad \signat(Y_{2})=(l_{2},p_{2},q_{2})\textrm{ and}\] 
\item $Y_{1}$ and $Y_{2}$ are in general position.
\end{enumerate}
\end{lem}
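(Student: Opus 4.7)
The plan is to mimic the proof of the symplectic analogue, Lemma \ref{lem:spgen}, proceeding by induction on $n$. The base cases ($n$ small) are handled by direct enumeration of admissible signature pairs and explicit constructions from the standard basis $\Ccal$, possibly with small tilts of the form $x_i + \varepsilon y_j$ to guarantee general position when the two subspaces are forced to occupy overlapping maximal positive-definite or negative-definite subspaces.

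For the inductive step from $V^{n-1}$ to $V^n$, I would isolate the last hyperbolic plane $H_n = \textrm{span}\{x_n, y_n\}$ and split it as an internal direct sum $H_n = L_1 \oplus L_2$ of two $1$-dimensional subspaces. Each $L_i$ is of one of three $Q$-types: positive (for instance $\textrm{span}\{x_n\}$), negative ($\textrm{span}\{y_n\}$), or null ($\textrm{span}\{x_n \pm y_n\}$). Because each $L_i$ is $B$-orthogonal to $V^{n-1}$, attaching $L_i$ to a subspace $Y_i' \subset V^{n-1}$ as an orthogonal direct sum shifts $\signat(Y_i')$ by $(0,1,0)$, $(0,0,1)$, or $(1,0,0)$ respectively. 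Moreover, since $L_1 + L_2 = H_n$, the pair $(Y_1, Y_2) = (Y_1' \oplus L_1, Y_2' \oplus L_2)$ is in general position in $V^n$ whenever $(Y_1', Y_2')$ is in general position in $V^{n-1}$.

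A choice of types for $(L_1, L_2)$ thus produces a reduction of the required signature data in $V^n$ to signature data in $V^{n-1}$. The (positive, negative) move reduces $((l_1, p_1, q_1), (l_2, p_2, q_2))$ to $((l_1, p_1 - 1, q_1), (l_2, p_2, q_2 - 1))$; the (null, null) move with $\textrm{span}\{x_n + y_n\}$ and $\textrm{span}\{x_n - y_n\}$ reduces it to $((l_1 - 1, p_1, q_1), (l_2 - 1, p_2, q_2))$; the (positive, positive) move using two non-parallel positive lines such as $\textrm{span}\{x_n\}$ and $\textrm{span}\{x_n + \tfrac{1}{2} y_n\}$ reduces it to $((l_1, p_1 - 1, q_1), (l_2, p_2 - 1, q_2))$; and there are analogous (negative, negative), (positive, null), and (negative, null) moves. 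To apply the inductive hypothesis I need the reduced signatures to have nonnegative entries, to satisfy the constraints $l_i' + p_i' \leq n-1$ and $l_i' + q_i' \leq n-1$ of Lemma \ref{lem:sosub} in $V^{n-1}$, and to have total dimension $2(n-1)$; the last item is automatic. The remaining combinatorial task is to verify that for every admissible input $(l_1, p_1, q_1, l_2, p_2, q_2)$, at least one of the moves in the above menu yields reduced data that is admissible in $V^{n-1}$. When the standard (positive, negative) and (null, null) moves fail because some boundary inequality in $V^n$ is tight, one of the tilted moves is available; a finite case analysis indexed by which of $l_i + p_i$, $l_i + q_i$ equal $n$ and which of $l_i, p_i, q_i$ vanish then closes the induction.

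The main obstacle is the bookkeeping of this case analysis, which is heavier than in the symplectic setting because each subspace now carries three signature parameters $(l, p, q)$ rather than two $(2r, p)$, producing more boundary configurations to rule out. I expect the enlarged menu of reduction moves to be sufficient so that no direct non-inductive construction is needed beyond the base case; however, a handful of extremal configurations such as $p_1 = p_2 = n$ with $q_1 = q_2 = 0$ can alternatively be dispatched by explicit tilts (e.g.\ $Y_1 = \textrm{span}\{x_1, \ldots, x_n\}$, $Y_2 = \textrm{span}\{2x_1 + y_1, \ldots, 2x_n + y_n\}$, both of signature $(0,n,0)$, meeting trivially and spanning $V$) in the same spirit as the last cases of the proof of Lemma \ref{lem:spgen}.
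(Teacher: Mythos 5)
Your proposal takes a genuinely different route from the paper. The paper proves Lemma~\ref{lem:sogen} by a direct, non-inductive case analysis (cases I-1, I-2, I-3, II-1, II-2) writing out explicit bases for $Y_1$ and $Y_2$, whereas you mimic Lemma~\ref{lem:spgen} and run an induction on $n$ by peeling off the last hyperbolic plane $H_n$. The decisive observations in your sketch are all correct: orthogonality of $H_n$ to $V^{n-1}$ makes signatures additive under $Y_i' \perp L_i$; two distinct lines $L_1, L_2 \subset H_n$ satisfy $L_1 \oplus L_2 = H_n$; and with $\dim Y_1 + \dim Y_2 = 2n$, general position of $(Y_1', Y_2')$ in $V^{n-1}$ forces general position of $(Y_1' \oplus L_1, Y_2' \oplus L_2)$ in $V^n$, since the decomposition $V^n = V^{n-1} \perp H_n$ lets you split any vector in the intersection into its two pieces. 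Where your writeup stops short is the final assertion that the menu of moves always permits an admissible reduction: you say you \emph{expect} it and leave the case analysis open. That gap does in fact close, and more easily than you fear, because the choices for $Y_1$ and $Y_2$ decouple completely: every ordered pair of line types in $\{\text{pos}, \text{neg}, \text{null}\}$ is realized by two lines spanning $H_n$, so it suffices to check that each $Y_i$ \emph{individually} admits a legal reduction. The bookkeeping for a single $Y_i$ is short: a null move is legal whenever $l_i \ge 1$; a positive move is legal whenever $p_i \ge 1$ and $l_i + q_i \le n-1$; a negative move is legal whenever $q_i \ge 1$ and $l_i + p_i \le n-1$; and the only triples $(l_i,p_i,q_i)$ excluded from all three are $(0,0,0)$ and $(0,n,n)$, which are the degenerate subspaces $\{0\}$ and $V$ ruled out by the hypotheses. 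So there is no genuine obstruction, and the ``tilted'' lines you introduce (e.g.\ $\mathrm{span}\{x_n + \tfrac12 y_n\}$) are exactly the right tool to realize the (pos,\,pos) and (neg,\,neg) moves. One point you should make explicit rather than leave implicit: the inductive hypothesis must be stated over $\Nbb_{\ge 0}$ (with the two degenerate triples excluded), not $\Nbb_{>0}$, since reduction can drive an entry to zero. As for what each approach buys: the paper's direct construction requires no induction but produces lengthy basis lists that are tedious to verify; your inductive scheme is conceptually cleaner and the single-$Y_i$ check above replaces the basis verification with one short inequality computation.
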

\begin{proof} 
Recall that we fix a basis \[\Ccal=\{x_{1},\dots, x_{n},y_{1},\dots, y_{n}\}\] on $V$ such that \[Q(x_{i})=1, Q(y_{i})=-1\] for all $i\in \{1,\dots, n\}$. We construct such $Y_{1}$ and $Y_{2}$ case by case. Without loss of generality, we only need to check both
\begin{enumerate}
\item[I.] $p_{1}\ge q_{1}$ and $p_{2}\le q_{2}$ case and
\item[II.] $p_{1}\le q_{1}$ and $p_{2}\le q_{2}$ case.
\end{enumerate}
Indeed, when $p_{1}\ge q_{1}$ and $p_{2}\ge q_{2}$, one can change roles of $x$ and $y$ from the case II.

\textbf{I. $p_{1}\ge q_{1}$ and $p_{2}\le q_{2}$ case:} Without loss of generality, we may assume \[p_{1}-q_{1}\le q_{2}-p_{2}.\] Indeed, we can change roles of $x$ and $y$ from below for the other case.

\textbf{I-1. When we are in the case of $p_{1}+p_{2}, q_{1}+q_{2} \le n$:} 
\begin{enumerate}
\item If $0\le l_{1}-n+q_{1}+q_{2}, 0\le l_{2}-n+q_{1}+q_{2}$, then we can take $Y_{1}$ and $Y_{2}$ as follows.
\begin{align*} 
Y_{1}&=\textrm{span}\left\{ x_{p_{1}+p_{2}+1}+y_{q_{1}+1},\dots, x_{n-l_{2}}+y_{2q_{1}+l_{1}-n+q_{2}}\right\}\\
&\perp \textrm{span}\left\{ x_{q_{1}+q_{2}+1}+y_{q_{1}+q_{2}+1},\dots, x_{n}+y_{n}\right\}\\
&\perp  \underbrace{\textrm{span}\left\{x_{1},\dots, x_{q_{1}},x_{q_{1}+p_{2}+1},\dots, x_{p_{2}+p_{1}}\right\}}_{\textrm{postive definite}}\\
&\perp \underbrace{\textrm{span}\left\{y_{1},\dots, y_{q_{1}}\right\}}_{\textrm{negative definite}}\\
Y_{2}&= \textrm{span}\left\{x_{n-l_{2}+1}+y_{1},\dots, x_{q_{1}+q_{2}}+y_{q_{1}+q_{2}+l_{2}-n}\right\}\\
&\perp \textrm{span}\left\{ x_{q_{1}+q_{2}+1}-y_{q_{1}+q_{2}+1},\dots, x_{n}-y_{n}\right\}\\
&\perp \underbrace{\textrm{span}\left\{y_{q_{1}+1},\dots,y_{p_{2}+q_{2}}\right\}}_{\textrm{negative definite}}\\
&\perp \underbrace{\textrm{span}\left\{x_{q_{1}+1},\dots, x_{q_{1}+p_{2}}\right\}}_{\textrm{positive definite}}
\end{align*}
\item If $l_{1}-n+q_{1}+q_{2}\le 0\le l_{2}-n+q_{1}+q_{2}$, then we can take $Y_{1}$ and $Y_{2}$ as follows.
\begin{align*}
Y_{1}&=\textrm{span}\left\{x_{n-(l_{1}-1)}+y_{n-(l_{1}-1)},\dots, x_{n}+y_{n}\right\}\\
&\perp\underbrace{\textrm{span}\left\{x_{q_{1}+p_{2}+1},\dots, x_{p_{1}+p_{2}},x_{1},\dots, x_{q_{1}}\right\}}_{\textrm{positive definite}}\\
&\perp\underbrace{\textrm{span}\left\{y_{1},\dots, y_{q_{1}}\right\}}_{\textrm{negative definite}}\\
Y_{2}&=\textrm{span}\left\{ x_{p_{1}+p_{2}+1}+y_{1},\dots, x_{n-l_{1}}+y_{n-(l_{1}+p_{1}+p_{2})}\right\}\\
&\perp \textrm{span}\left\{x_{1}+y_{q_{1}+q_{2}+1},\dots, x_{n-(l_{1}+q_{1}+q_{2})}+y_{n-l_{1}}\right\}\\
&\perp\textrm{span}\left\{x_{n-(l_{1}-1)}-y_{n-(l_{1}-1)},\dots, x_{n}-y_{n}\right\}\\
&\perp\underbrace{\textrm{span}\left\{x_{q_{1}+1},\dots, x_{q_{1}+p_{2}}\right\}}_{\textrm{postive definite}}\\
&\perp\underbrace{\textrm{span}\left\{y_{q_{1}+1},\dots, y_{q_{1}+p_{2}}\right\}}_{\textrm{negative definite}}
\end{align*}
\item If $l_{2}-n+q_{1}+q_{2}\le 0\le l_{1}-n+q_{1}+q_{2}$, then we can take $Y_{1}$ and $Y_{2}$ as follows.
\begin{align*} 
Y_{1}&= \textrm{span}\left\{x_{n-(l_{2}-1)}+y_{n-(l_{2}-1)},\dots, x_{n}+y_{n}\right\}\\
&\perp \textrm{span}\left\{x_{p_{1}+p_{2}+1}+y_{q_{1}+1},\dots, x_{n-l_{2}}+y_{n-l_{2}-p_{2}-p_{1}}\right\}\\
&\perp\textrm{span}\left\{x_{q_{1}+1}+y_{q_{1}+q_{2}+1},\dots, x_{n-l_{2}-q_{2}}+y_{n-l_{2}}\right\}\\
&\perp \underbrace{\textrm{span}\left\{x_{q_{1}+p_{2}+1},\dots, x_{p_{1}+p_{2}},x_{1},\dots, x_{q_{1}}\right\}}_{\textrm{positive definite}}\\
&\perp \underbrace{\textrm{span}\left\{y_{1},\dots, y_{q_{1}}\right\}}_{\textrm{negative definite}}\\
Y_{2}&=\textrm{span}\left\{x_{n-(l_{2}-1)}-y_{n-(l_{2}-1)},\dots, x_{n}-y_{n}\right\}\\
&\perp \underbrace{\textrm{span}\left\{x_{q_{1}+1},\dots, x_{q_{1}+p_{2}}\right\}}_{\textrm{positive definite}}\\
&\perp \underbrace{\textrm{span}\left\{y_{q_{1}+1},\dots, y_{q_{1}+q_{2}}\right\}}_{\textrm{negative definite}}
\end{align*}
\end{enumerate}

\textbf{I-2. When we are in the case of $q_{1}+q_{2}\le n\le p_{1}+p_{2}$:}  We can take $Y_{1}$ and $Y_{2}$ as follows.
\begin{align*}
Y_{1}&=\textrm{span}\left\{ x_{p_{1}+p_{2}+q_{1}+q_{2}+l_{2}-n+1}+y_{q_{1}+1},\dots, x_{n}+y_{q_{1}+l_{1}}\right\}\\
&\perp\underbrace{\textrm{span}\left\{x_{1},\dots, x_{q_{1}},x_{q_{1}+p_{2}+1},\dots, x_{p_{1}+q_{2}}\right\}}_{\textrm{positive definite}}\\
&\perp \underbrace{\textrm{span} \left\{y_{1},\dots, y_{q}\right\}}_{\textrm{negative definite}}\\
Y_{2}&=\textrm{span}\left\{x_{p_{1}+p_{2}+q_{1}+q_{2}-n+1}+y_{q_{1}+q_{2}-n+1},\dots, x_{p_{1}+p_{2}+q_{1}+q_{2}-n+l_{2}}+y_{q_{1}+q_{2}+l_{2}-n}\right\} \\
&\perp \underbrace{\textrm{span}\left\{x_{q_{1}+1},\dots, x_{q_{1}+p_{2}}\right\}}_{\textrm{positive definite}}\\
&\perp\underbrace{\textrm{span}\left\{x_{p_{1}+q_{2}+1}+\sqrt{2}y_{1},\dots, x_{p_{1}+p_{2}+q_{1}+q_{2}-n}+\sqrt{2}y_{q_{1}+q_{2}-n}\right\}}_{\textrm{negative definite}}\\
&\perp \underbrace{\textrm{span}\left\{y_{q_{1}+1},\dots, y_{n}\right\}}_{\textrm{negative definite}}
\end{align*}

\textbf{I-3. When we are in the case of $q_{1}+q_{2}\ge n\ge p_{1}+p_{2}$:} One can construct desired $Y_{1}$ and $Y_{2}$ from the case I-2. after changing roles of $x$ and $y$.

\textbf{II. $p_{1}\le q_{1}$, $p_{2}\le q_{2}$ case:} 
Without loss of generality, we may assume that $l_{1}\le l_{2}$. We divide into two cases.

\textbf{II-1. When we are in the case of $p_{1}+p_{2}\le n$:}
\begin{enumerate}
\item If $n-p_{1}-p_{2}\le l_{1}$ and $n-p_{1}-p_{2}\le l_{2}$, then we can take $Y_{1}$ and $Y_{2}$ as follows.
\begin{align*}
Y_{1}&=\textrm{span}\left\{ x_{p_{1}+p_{2}+1}+y_{p_{1}+p_{2}+1},\dots, x_{n}+y_{n}\right\}\\
&\perp \textrm{span}\left\{x_{p_{1}+1}+y_{q_{1}+q_{2}+1},\dots, x_{p_{1}+l_{1}-n+p_{1}+p_{2}}+y_{q_{1}+q_{2}+l_{1}-n+p_{1}+p_{2}}\right\}\\
&\perp\underbrace{\textrm{span}\left\{x_{1},\dots, x_{p_{1}}\right\}}_{\textrm{positive definite}}\\
&\perp \underbrace{\textrm{span} \left\{y_{1},\dots, y_{q_{1}}\right\}}_{\textrm{negative definite}}\\
Y_{2}&=\textrm{span}\left\{x_{p_{1}+p_{2}+1}-y_{p_{1}+p_{2}+1},\dots, x_{n}-y_{n}\right\}\\
&\perp\textrm{span}\left\{x_{1}+y_{q_{1}+q_{2}+l_{1}-n+p_{1}+p_{2}+1},\dots, x_{l_{2}-n+p_{1}+p_{2}}+y_{p_{1}+p_{2}}\right\}\\
&\perp\underbrace{\textrm{span}\left\{x_{p_{1}+1},\dots, x_{p_{1}+p_{2}}\right\}}_{\textrm{positive definite}}\\
&\perp\underbrace{\textrm{span}\left\{y_{q_{1}+1},\dots, y_{q_{1}+q_{2}}\right\}}_{\textrm{negative definite}}
\end{align*}

\item If $l_{1}\le n-p_{1}-p_{2}\le l_{2}$, then we can take $Y_{1}$ and $Y_{2}$ as follows.
\begin{align*}
Y_{1}&=\textrm{span}\left\{x_{n-(l_{1}-1)}+y_{n-(l_{1}-1)},\dots, x_{n}+y_{n}\right\}\\
&\perp \underbrace{\textrm{span}\left\{x_{1},\dots, x_{p_{1}}\right\}}_{\textrm{positive definite}}\\
&\perp \underbrace{\textrm{span}\left\{y_{1},\dots, y_{q_{1}}\right\}}_{\textrm{negative definite}}\\
Y_{2}&=\textrm{span}\left\{x_{n-(l_{1}-1)}-y_{n-(l_{1}-1)},\dots, x_{n}-y_{n}\right\}\\
&\perp\textrm{span}\left\{x_{p_{1}+p_{2}+1}+y_{1},\dots, x_{n-l_{1}}+y_{n-l_{1}-p_{1}-p_{2}}\right\}\\
&\perp\textrm{span}\left\{x_{1}+y_{q_{1}+q_{2}+1},\dots, x_{n-l_{1}-q_{1}-q_{2}}+y_{n-l_{1}}\right\}\\
&\perp\underbrace{\textrm{span}\left\{x_{p_{1}+1},\dots, x_{p_{1}+p_{2}}\right\}}_{\textrm{positive definite}}\\
&\perp\underbrace{\textrm{span}\left\{y_{q_{1}+1},\dots, y_{q_{1}+q_{2}}\right\}}_{\textrm{negative definite}}
\end{align*}
\end{enumerate}

\textbf{II-2. When we are in the case of $p_{1}+p_{2}\ge n$:} In this case we can take $Y_{1}$ and $Y_{2}$ as follows.
\begin{align*}
Y_{1}&=\textrm{span}\left\{x_{p_{1}+1}+y_{p_{1}+p_{2}+q_{1}+q_{2}-n+1},\dots, x_{l_{1}+p_{1}}+y_{l_{1}+p_{1}+p_{2}+q_{1}+q_{2}-n}\right\}\\
&\perp\underbrace{\textrm{span}\left\{x_{1},\dots,x_{p_{1}}\right\}}_{\textrm{positive definite}}\\
&\perp\underbrace{\textrm{span}\left\{y_{1},\dots, y_{q_{1}}\right\}}_{\textrm{negative definite}}\\
Y_{2}&=\textrm{span}\left\{x_{1}+y_{l_{1}+p_{1}+p_{2}+q_{1}+q_{2}-n+1},\dots, x_{l_{2}}+y_{n}\right\}\\
&\perp\underbrace{\textrm{span}\left\{x_{p_{1}+1},\dots,x_{n}\right\}}_{\textrm{positive definite}}\\
&\perp\underbrace{\textrm{span}\left\{\sqrt{2}x_{1}+y_{q_{1}+q_{2}+1},\dots, \sqrt{2}x_{p_{1}+p_{2}-n}+y_{p_{1}+p_{2}+q_{1}+q_{2}-n}\right\}}_{\textrm{positive definite}}\\
&\perp\underbrace{\textrm{span}\left\{y_{q_{1}+1},\dots, y_{q_{1}+q_{2}}\right\}}_{\textrm{negative definite}}
\end{align*}

The above case by case constructions show that we can find desired subspaces $Y_{1}$ and $Y_{2}$. \end{proof}

As a result, we can deduce the following corollary that is same analogue with Corollary \ref{coro:spgoal}. The proof is same except that we use Lemma \ref{lem:soact} and \ref{lem:sogen} instead of \ref{lem:spact} and \ref{lem:spgen}.
\begin{coro}\label{coro:sogoal}
For any two subspaces $W_{1},W_{2}<V$, we can find $h\in \SO(V,Q)$ such that $hW_{1}$ and $W_{2}$ are in general position. More precisely, we can find $h\in \SO(V,Q)$ such that 
\begin{enumerate} 
\item $hW_{1}\cap W_{2}=0$ if $\dim W_{1}+\dim W_{2}\le 2n$, 
\item $V=hW_{1}+W_{2}$ if $\dim W_{1}+\dim W_{2} \ge 2n$.
\end{enumerate}
\end{coro}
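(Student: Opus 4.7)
The plan is to mirror the proof of Corollary \ref{coro:spgoal} line by line, with Lemma \ref{lem:soact} and Lemma \ref{lem:sogen} playing the roles of Lemma \ref{lem:spact} and Lemma \ref{lem:spgen}, respectively. Set $s=\dim W_1$ and $k=\dim W_2$, and treat the two target conclusions uniformly via an intermediate pair of subspaces $X_1,X_2\subset V$ whose dimensions sum to $2n$.

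First I would carry out the same case analysis on $s,k$ relative to $n$ as in the symplectic proof to select auxiliary subspaces $X_1,X_2$ with $\dim X_1+\dim X_2=2n$ and either $W_1\subset X_1,\ W_2\subset X_2$ (when $s+k\le 2n$) or $W_1\supset X_1,\ W_2\supset X_2$ (when $s+k\ge 2n$). Concretely: when $s,k\ge n$ take $n$-dimensional $X_1\subset W_1$ and $X_2\subset W_2$; when $s,k\le n$ take $n$-dimensional $X_1\supset W_1$ and $X_2\supset W_2$; and in the mixed regimes subdivide further according to whether $s+k\ge 2n$, choosing one of the $X_i$ to coincide with the corresponding $W_i$ and adjusting the other so the dimensions add to $2n$, exactly as in the proof of Corollary \ref{coro:spgoal}.

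Next I would apply Lemma \ref{lem:sogen} with $(l_i,p_i,q_i)=\signat(X_i)$ to produce subspaces $Y_1,Y_2\subset V$ with matching signatures that are in general position. The hypotheses of Lemma \ref{lem:sogen} are automatic here: the inequalities $l_i+p_i\le n$ and $l_i+q_i\le n$ follow from Lemma \ref{lem:sosub} applied to $X_i\subset V$, and $\dim X_1+\dim X_2=2n$ holds by construction. Because $X_i$ and $Y_i$ share signatures, Lemma \ref{lem:soact} supplies isometries $h_1,h_2\in\SO(V,Q)$ with $h_1X_1=Y_1$ and $h_2X_2=Y_2$, and then $h:=h_2^{-1}h_1\in\SO(V,Q)$ places $hX_1$ and $X_2$ in general position. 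Finally, the containment (or reverse containment) between $X_i$ and $W_i$ transfers the general-position property from the pair $(hX_1,X_2)$ to the pair $(hW_1,W_2)$: if $W_i\subset X_i$ then $hW_1\cap W_2\subset hX_1\cap X_2=0$, and if $W_i\supset X_i$ then $hW_1+W_2\supset hX_1+X_2=V$.

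The step I expect to require the most care is bookkeeping rather than ideas: checking in each of the sub-cases that the chosen $X_1,X_2$ genuinely have both the correct containment with the $W_i$ and total dimension exactly $2n$, and that their signatures fall in the range handled by Lemma \ref{lem:sogen}. Conceptually no new ingredient is needed beyond the three lemmas \ref{lem:sosub}, \ref{lem:soact}, and \ref{lem:sogen} already in hand, so the proof of Corollary \ref{coro:sogoal} is essentially a transcription of the proof of Corollary \ref{coro:spgoal} with the symplectic lemmas replaced by their orthogonal analogues.
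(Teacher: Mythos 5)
Your proposal matches the paper's proof exactly: the paper proves Corollary \ref{coro:sogoal} by repeating the reduction-to-equal-dimension argument of Corollary \ref{coro:spgoal} verbatim, substituting Lemmas \ref{lem:soact} and \ref{lem:sogen} for their symplectic counterparts, which is precisely your plan including the choice of intermediate subspaces $X_1,X_2$ and the final containment transfer.
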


\section{Proof of the Main theorem}\label{sec:main}
In this section, we prove Theorem \ref{thm:main}. We will always denote $d$, $\Gamma$, $G$, and $M$ as in Notation \ref{globnota}. We further retain notations and settings in previous sections. As before, we fix a vector space $V=\Rbb^{d}$ and identify the fiber of frame bundle $P$ at $x\in M$ as a group of linear isomorphisms $V\to T_{x}M$.  

Let $\alpha\colon\Gamma\to \Diff^{1}(M)$ be a $C^{1}$ action. We assumed that following holds. 
	\begin{enumerate}
		\item There is a $\gamma_{0}\in\Gamma$ so that $\alpha(\gamma_{0})$ admits dominated splitting. We can find continuous $\alpha(\gamma_{0})$ invariant subbundle $E$ and $F$, some constant $C,\lambda>0$ so that \[TM=E\oplus F, \quad \frac{||D_{x}\alpha(\gamma_{0}^{n})(v)||}{||D_{x}\alpha(\gamma_{0}^{n})(w)||}<Ce^{-\lambda n}, \quad \forall n>0\] for any unit vectors $v\in E$ and $w\in F$.
		\item There is a fully supported $\alpha(\Gamma)$ invariant Borel probability measure $\mu$. 
	\end{enumerate}

As in Section \ref{sec:csr}, let $\pi_{0}$, $\pi_{1}$ and $\pi_{2}$ be the trivial, defining and contragredient representations of $G$ on $V$, respectively. Recall that if $\pi_{1}$ is isomorphic to $\pi_{2}$ then we assumed $\pi_{1}=\pi_{2}$ after conjugation. Further, in all cases, we assume that $\pi_{1}(G)=\pi_{2}(G)$ after conjugation. Then the cocycle superrigidity theorem \ref{thm:ZCSR} says that we can find a measurable section $\sigma\colon M\to P$,
measurable map $\iota\colon M\to \{0,1,2\}$, a compact subgroup $\kappa_{i}\subset\GL(V)$ and measurable cocycle $K_{i}\colon \Gamma\times \iota^{-1}(i)\to \kappa_{i}$ for $i=0,1,2$ such that 
	\begin{enumerate}
		\item for $\mu$ almost every $x\in M$ and for every $\gamma\in\Gamma$, \[D_{x}(\gamma)\sigma(x)=
		\sigma(\alpha(\gamma)(x))\pi_{\iota(x)}(\gamma)K_{\iota(x)}(\gamma,x)
		\]  with respect to the measurable framing $\sigma$.
		\item $\pi_{i}(G)$ commutes with $\kappa_{i}$ for $i=0,1,2$.
	\end{enumerate}

Recall that $K_{1}$ and $K_{2}$ takes values on $\{\pm I_{V}\}$.

As the proof involves, we organize steps in the proof here. We prove that $\iota^{-1}(0)$ is $\mu$-measure zero set in Section \ref{sec:nul} using the dominated splitting. This allows us to focus on nontrivial representations $\pi_{1}$ and $\pi_{2}$.

In Section \ref{sec:contconj}, we prove that, after projectivization, the measurable section is same as continuous section almost everywhere using a comparison between measurable data from the superrigidity theorem and continuous data from the dominated splitting. Until this point, everything works for fully supported $\Gamma$- invariant Borel probability measure $\mu$. And then, under the volume preserving assumption, we conclude that for any hyperbolic element $\gamma\in\Gamma$,  $\alpha(\gamma)$ is an Anosov diffeomorphism.

After that, in Section \ref{sec:M}, using the Franks--Newhouse theorem in the case of \textbf{SL}, and the Brin--Manning theorem in the case of \textbf{Sp} and \textbf{SO}, we will deduce that $M$ is a torus or infra-torus. This will give a proof of Theorem \ref{thm:main}.

\subsection{Nullity of $\iota^{-1}(0)$}\label{sec:nul}
We prove a following lemma that says the set $\iota^{-1}(0)$ is a null set under our assumptions so that we can ignore the appearance of $\pi_{0}$ and $\kappa_{0}$.

\begin{lem}\label{lem:scrapply}
Under same notations and assumptions in Theorem \ref{thm:main}, we can find a measurable section $\sigma\colon M\to P$ and a measurable map $\iota\colon M\to \{1,2\}$ such that for $\mu$ almost every $x$, every $\gamma\in\Gamma$,
 \[D_{x}(\alpha(\gamma))\sigma(x)=\pm \sigma(\alpha(\gamma)(x))\pi_{\iota(x)}(\gamma),\] for $\mu$ almost every $x\in M$ and every $\gamma\in \Gamma$.  \end{lem}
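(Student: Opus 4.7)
The plan is to derive Lemma~\ref{lem:scrapply} from Theorem~\ref{thm:ZCSR} by showing $\mu(\iota^{-1}(0))=0$; the remaining content of the lemma then follows, since $K_1,K_2$ already take values in $\{\pm I_V\}$ as recalled in Section~\ref{sec:csr}. The key idea is that on $\iota^{-1}(0)$ the derivative cocycle is cohomologous, via $\sigma$, to a cocycle with values in the compact group $\kappa_0$, so on any $\alpha(\gamma_0)$-invariant ergodic measure supported there the Lyapunov exponents of $\alpha(\gamma_0)$ must all vanish; but the dominated splitting for $\alpha(\gamma_0)$ forces a uniform gap of at least $\lambda$ in the Lyapunov spectrum on any invariant measure, giving a contradiction.

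More concretely, I would assume for contradiction that $\mu(\iota^{-1}(0))>0$. Since $\iota^{-1}(0)$ is $\alpha(\Gamma)$-invariant and in particular $\alpha(\gamma_0)$-invariant, taking the ergodic decomposition of $\mu|_{\iota^{-1}(0)}$ with respect to $\alpha(\gamma_0)$ produces an ergodic $\alpha(\gamma_0)$-invariant Borel probability measure $\nu$ with $\nu(\iota^{-1}(0))=1$. By Lusin's theorem, choose a compact $A\subset \iota^{-1}(0)$ with $\nu(A)>0$ on which both $\|\sigma(x)\|$ and $\|\sigma(x)^{-1}\|$ are bounded by some $M<\infty$. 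On $\iota^{-1}(0)$ the cocycle identity of Theorem~\ref{thm:ZCSR} gives
$$D_x\alpha(\gamma_0^n)=\sigma(\alpha(\gamma_0^n)x)\,K_0(\gamma_0^n,x)\,\sigma(x)^{-1},$$
with $K_0(\gamma_0^n,x)\in\kappa_0$ uniformly bounded. Poincaré recurrence for $(\alpha(\gamma_0),\nu)$ then yields, for $\nu$-a.e.\ $x\in A$, a sequence $n_k\to\infty$ with $\alpha(\gamma_0^{n_k})x\in A$, along which $\|D_x\alpha(\gamma_0^{n_k})\|$ stays bounded by a constant depending only on $M$ and $\kappa_0$.

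Now apply the Oseledets multiplicative ergodic theorem to $(\alpha(\gamma_0),\nu)$. The subsequential bound from the previous step forces the top Lyapunov exponent $\chi_{\max}\le 0$; running the same argument with $\gamma_0$ replaced by $\gamma_0^{-1}$ (whose derivative cocycle is the inverse and whose exponents are the negatives of those of $\gamma_0$) yields $\chi_{\min}\ge 0$. Hence all Lyapunov exponents of $\alpha(\gamma_0)$ on $\nu$ vanish. On the other hand, the dominated splitting $TM=E\oplus F$ gives, for every $x$ and all unit $v\in E_x$, $w\in F_x$,
$$\tfrac{1}{n}\bigl(\log\|D_x\alpha(\gamma_0^n)v\|-\log\|D_x\alpha(\gamma_0^n)w\|\bigr)<\tfrac{\log C}{n}-\lambda,$$
so taking $n\to\infty$ and using Oseledets along the directions $v,w$ at a generic $x$ forces two Lyapunov exponents differing by at least $\lambda>0$. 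This contradicts the vanishing of the exponents on $\nu$.

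The main obstacle is bridging the gap between the pointwise domination hypothesis, which holds for every $x\in M$, and the purely measurable structure provided by cocycle superrigidity, which only controls $\sigma$ up to $\mu$-null sets with no a priori uniform bound. Lusin plus Poincaré recurrence is exactly the device that transfers the cocycle identity into a Lyapunov statement, and a secondary care-point is that one must first pass from the $\Gamma$-invariant measure $\mu$ to an $\alpha(\gamma_0)$-ergodic component inside the $\Gamma$-invariant set $\iota^{-1}(0)$ before Oseledets applies. Once $\mu(\iota^{-1}(0))=0$ is established, the lemma follows by restricting the conclusion of Theorem~\ref{thm:ZCSR} to $\iota^{-1}(\{1,2\})$.
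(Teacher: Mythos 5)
Your proposal is correct and follows essentially the same route as the paper: both reduce the lemma to showing $\mu(\iota^{-1}(0))=0$, and both derive the contradiction by combining the compact-valued cocycle identity on $\iota^{-1}(0)$, boundedness of $\sigma$ and $\sigma^{-1}$ on a positive-measure set, and Poincar\'e recurrence to force vanishing growth rates of $D_x\alpha(\gamma_0^{n_k})$ along return times, which is incompatible with the uniform exponential gap $-\lambda$ coming from the dominated splitting. The only difference is cosmetic: you package the growth rates via the Oseledets theorem on an $\alpha(\gamma_0)$-ergodic component, whereas the paper argues directly with the subsequential limits $\frac{1}{n_k}\ln\|D_{x_0}\alpha(\gamma_0^{n_k})v\|$ pointwise.
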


\begin{proof}
Recall that we fixed standard inner product on $V=\Rbb^{d}$.   For the sake of contradiction, assume that the $\iota^{-1}(0)$ has positive $\mu$ measure.

As $\pi_{0}$ is the trivial representation, we have for any $n\ge 0$ and $\mu$-almost every $x\in \iota^{-1}(0)$, \begin{align}\label{eq2} D_{x}(\alpha(\gamma_{0}^{n}))\sigma(x)=\sigma(\alpha(\gamma_{0}^{n})(x))K_{0}(\gamma_{0}^{n},x),\end{align} where $K_{0}: \Gamma\times \iota^{-1}(0)\to \kappa_{0}$ is the compact group valued cocycle.  

Fix Riemannian metric on $M$. For any $c>0$, we define measurable subset $\Scal_{c}\subset M$ as \[\Scal_{c}=\left\{x\in \iota^{-}(0):\textrm{ $x$ satisfies (\ref{eq2}) and }  ||\sigma(x)||, ||\sigma(x)^{-1}|| <c\right\}.\] Here $||\sigma(x)||$ is the operator norm  of $\sigma(x):V\to T_{x}M$ with respect to standard norm on $V$ and Riemannian metric on $T_{x}M$. Since we assumed that $\mu(\iota^{-1}(0))$ has positive measure, we can find sufficiently large number $C_{0}>0$ such that \[\mu\left(\Scal_{C_{0}}\right)>0.\]  

Using the Poincar\'e recurrence theorem for measure preserving transformation $\alpha(\gamma_{0})$, for $\mu$-almost every  $x_{0}\in S_{C_{0}}$, one can find sequence $\left\{n_{k}\right\}_{k\in\Nbb}\subset \Nbb$ such that 
\begin{enumerate}
\item $n_{0}=0$, $n_{k}\to \infty$ as $k\to \infty$, 
 \item equation (\ref{eq2}) holds, and
 \item for all $k\ge0$, \[||\sigma(\alpha(\gamma_{0}^{n_{k}})(x_{0}))||<C_{0}, \textrm{ and } ||\sigma(\alpha(\gamma_{0}^{n_{k}})(x_{0}))^{-1}||<C_{0}.\] 
 \end{enumerate}
Fix $x_{0}\in S_{C_{0}}$ and sequence $\left\{n_{k}\right\}$ as above.

It follows that \[\lim_{k\to\infty}\frac{1}{n_{k}}\ln||D_{x_{0}}(\alpha(\gamma_{0}^{n_{k}}))(v)||=\lim_{k\to\infty}\frac{1}{n_{k}}\ln||D_{x_{0}}(\alpha(\gamma_{0}^{n_{k}})(w)|| =0,\] for any unit vector $v\in E_{x_{0}}$ and $w\in F_{x_{0}}$. Indeed, for any unit vector $v\in E$, denote $v_{V}=\sigma(x)^{-1}(v)\in V$. Then using (\ref{eq2}) and the bound of $\sigma$, we can get \[\frac{1}{C}||v_{V}||<||D_{x_{0}}(\alpha(\gamma_{0}^{n_{k}})(v)||=||\sigma(\alpha(\gamma_{0}^{n_{k}})(x_{0}))K_{0}(\gamma_{0}^{n_{k}},x_{0})(v_{V})||<C||v_{V}||.\] This implies \[\lim_{k\to\infty}\frac{1}{n_{k}}\ln||D_{x_{0}}(\alpha(\gamma_{0}^{n_{k}}))(v)||=0.\] Similar argument can be applied for the unit vector $w\in F$. This proves claim.

However, as $\gamma_{0}\in \Gamma$ admits a dominated splitting, for all $x\in M$, we have \[\left[\lim_{k\to\infty}\frac{1}{n_{k}}\ln||D_{x}(\alpha(\gamma_{0}^{n_{k}}))(v)||\right] -\left[\lim_{k\to\infty}\frac{1}{n_{k}}\ln||D_{x}(\alpha(\gamma_{0}^{n_{k}})(w)|| \right]<-\lambda.\] 

This gives contradiction.  \end{proof}

\subsection{Continuity of the measurable conjugacy}\label{sec:contconj}  In this section, we modify and extend the main ideas in \cite{KLZ}.

We use the same notations as in the previous section. Recall that we fix $V=\Rbb^{d}$ and identify the fiber of the frame bundle $P$ over $M$ at $x$ as the set of linear isomorphisms $V\to T_{x}M$. Lets denote projective frame bundle as $\overline{P}\to M$. Each fibers can be naturally identified with $\PGL(V)$. We will use the bracket notation $[-]$ for the projectivization of a linear map.

We prove the following continuity statement in this section. Recall that an element $g\in G$ is called \emph{hyperbolic} if there is no eigenvalue (over $\Cbb$) of $g$ with modulus $1$ and \emph{semisimple} if $g$ is diagonalizable over $\Rbb$.

Recall that, in Section \ref{sec:csr}, only in the case of \textbf{SL}, $\pi_{1}$ is not isomorphic to $\pi_{2}$. In this case, we saw that the measure $\mu$ can be decomposed into \[\mu=\mu_{1}+\mu_{2}\] where \[\mu_{1}(\iota^{-1}(1))=1, \mu_{2}(\iota^{-1}(2))=1\] unless $\mu=\mu_{1}$ or $\mu=\mu_{2}$. We denoted $\alpha(\Gamma)$ invariant compact set $X_{1}$ and $X_{2}$ as the support of $\mu_{1}$ and $\mu_{2}$, respectively. As $\mu$ is fully supported, $M=X_{1}\cup X_{2}$. When $\mu=\mu_{1}$ or $\mu=\mu_{2}$, $X_{2}=\emptyset$ or $X_{1}=\emptyset$, respectively. For simplicity, when $\pi_{1}=\pi_{2}$, set $X_{1}=M$ and $X_{2}=\emptyset$.

\begin{prop}[Continuity of measurable conjugacy]\label{prop:scrcont}
Let $G$, $\Gamma$, $M$, and $d$ be as in Notation \ref{globnota}. Let $\alpha\colon\Gamma\to \Diff^{1}(M)$ be a $C^{1}$ action and $\mu$ be a $\alpha(\Gamma)$ invariant fully supported Borel probability measure. Then, 
there are continuous (global) sections to projective frame bundle over $M$, $\Ccal_{i}\colon X_{i}\to \overline{P}$ for $i\in\{1,2\}$ such that \[[D_{x}\alpha(\gamma)]\Ccal_{i}(x)=\Ccal_{i}(\alpha(\gamma)(x))[\pi_{i}(\gamma)]\] for any $x\in X_{i}$ and $\gamma\in \Gamma$. 
 \end{prop}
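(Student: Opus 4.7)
The plan is to use the continuous dominated splitting $TM=E\oplus F$ from $\alpha(\gamma_{0})$ to rigidify the measurable framing $\sigma\colon M\to P$ supplied by Lemma \ref{lem:scrapply}, promoting its projectivization $[\sigma]$ to a continuous equivariant section of $\overline{P}$. A useful preliminary remark is that $Z_{\PGL(V)}(\pi_{i}(\Gamma))=\{1\}$, because $\pi_{i}(\Gamma)$ is Zariski-dense in $\pi_{i}(G)\in\{\SL(V),\Sp(V,\omega),\SO(V,Q)\}$ and each of these subgroups has only scalar centralizer in $\GL(V)$. Hence any two projective sections of $\overline{P}$ satisfying the $\Gamma$-equivariance coincide wherever both are defined, so it suffices to produce one continuous equivariant section on a conull subset of $X_{i}$ and then extend to all of $X_{i}$ using the closedness of the equivariance equation.

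First I would apply Lusin's theorem on $X_{i}$ ($i\in\{1,2\}$) to obtain a compact set $K\subset X_{i}$ of positive $\mu_{i}$-measure on which $\sigma$ is continuous and $\|\sigma^{\pm 1}\|$ is uniformly bounded by some constant $C$. Poincar\'e recurrence for $\alpha(\gamma_{0})$ yields, for $\mu_{i}$-a.e.\ $x\in K$, a sequence of return times $n_{k}\to\infty$ with $\alpha(\gamma_{0}^{n_{k}})x\in K$. Combined with the superrigidity identity
\[
 D_{x}\alpha(\gamma_{0}^{n_{k}})\sigma(x)=\pm\sigma(\alpha(\gamma_{0}^{n_{k}})x)\pi_{i}(\gamma_{0}^{n_{k}}),
\]
the uniform bound on $\sigma$ at both endpoints forces the growth rate of $v\in T_{x}M$ under $D\alpha(\gamma_{0}^{n_{k}})$ to match, up to bounded multiplicative error, the growth rate of $\sigma(x)^{-1}v$ under $\pi_{i}(\gamma_{0}^{n_{k}})$. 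The dominated-splitting inequality then places $\sigma(x)^{-1}(E_{x})$ inside the $\pi_{i}(\gamma_{0})$-invariant subspace of $V$ whose asymptotic expansion lies strictly below that of a complementary invariant subspace of matching dimension. Such a ``slow'' subspace is unique (it is the sum of generalized eigenspaces of $\pi_{i}(\gamma_{0})$ below the spectral gap predicted by the dominated splitting), so we obtain fixed subspaces $V_{s},V_{f}\subset V$ with $\sigma(x)(V_{s})=E_{x}$ and $\sigma(x)(V_{f})=F_{x}$ for $\mu_{i}$-almost every $x$. Since the right-hand sides are continuous in $x$, this already yields continuity of $[\sigma]$ at the level of these two Grassmannian pieces.

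To upgrade this to continuity of $[\sigma]$ into the full $\PGL(V)$, I would rewrite the superrigidity identity for an arbitrary $\gamma\in\Gamma$ as
\[
 \sigma(x)\bigl(\pi_{i}(\gamma)^{-1}V_{s}\bigr)=\pm D_{x}\alpha(\gamma)^{-1}\bigl(E_{\alpha(\gamma)x}\bigr)\quad\text{for a.e.\ }x,
\]
and analogously for $V_{f}$. The right-hand side is continuous in $x$, so for each $\gamma\in\Gamma$ we obtain a continuous formula for the image under $\sigma(x)$ of the fixed subspace $\pi_{i}(\gamma)^{-1}V_{s}$. The collection $\{\pi_{i}(\gamma)^{-1}V_{s}:\gamma\in\Gamma\}$ is a $\pi_{i}(\Gamma)$-orbit in the Grassmannian of $V$, hence Zariski-dense in the $\pi_{i}(G)$-orbit of $V_{s}$ by Borel density. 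In case \textbf{SL} this orbit is already the full Grassmannian, while in cases \textbf{Sp} and \textbf{SO} the general-position Corollaries \ref{coro:spgoal} and \ref{coro:sogoal} let us arrange translates of $V_{s}$ (and of $V_{f}$) into any prescribed incidence pattern with one another, so that the combined orbit cloud is rich enough to support a Chow-type reconstruction: a continuous incidence-preserving assignment on a Zariski-dense family of subspaces of $V$ uniquely extends to an element of $\PGL(V)$. This determines $[\sigma(x)]$ continuously on a conull subset of $X_{i}$, and the extension-by-continuity step then yields the desired global section $\Ccal_{i}$.

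The principal obstacle is the second step: verifying that $\sigma(x)^{-1}(E_{x})$ and $\sigma(x)^{-1}(F_{x})$ are genuinely fixed subspaces of $V$ rather than merely $\pi_{i}(\gamma_{0})$-invariant subspaces varying measurably with $x$. This requires matching the dimensions of $E$ and $F$ with a bona fide spectral gap in the moduli of eigenvalues of $\pi_{i}(\gamma_{0})$, which in turn rests on showing that the dominated splitting is compatible with the measurable Oseledets decomposition, whose structure superrigidity identifies with the weight structure of $\pi_{i}$. Care is needed when $\pi_{i}(\gamma_{0})$ has repeated eigenvalue moduli or lies on a wall of a Weyl chamber, but once the matching is secured, the subsequent Chow-type reconstruction and continuous-extension arguments are comparatively routine.
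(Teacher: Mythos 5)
Your overall strategy is the same as the paper's: transport the continuous dominated splitting through the measurable frame $\sigma$ to get fixed model subspaces of $V$, use the $\Gamma$-translates of those subspaces to pin down $[\sigma(x)]$ (your ``Chow-type reconstruction'' plays the role of the paper's orbit map $\Phi^{0}_{*}$ into a product of Grassmannian bundles, whose injectivity follows from the triviality of the common stabilizer via Borel density and Lemma \ref{lem:Ferlin}), and then pass from a conull set to all of $X_{i}$. The first two stages are essentially sound, and your Lusin--recurrence justification of why $\sigma(x)^{-1}(E_{x})$ is a fixed subspace is a reasonable expansion of a step the paper states tersely.

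The genuine gap is the final step, which you dismiss as ``the extension-by-continuity step'' and call comparatively routine. Knowing that $[\sigma]$ agrees with a continuous reconstruction on a dense conull subset $M^{0}\subset X_{i}$ does \emph{not} by itself yield a continuous section on all of $X_{i}$: for a point $x_{0}\in X_{i}\setminus M^{0}$ and a sequence $x_{n}\to x_{0}$ in $M^{0}$, the representing matrices $g_{n}$ of $[\sigma(x_{n})]$ in a local trivialization may, after normalization, converge to a \emph{singular} endomorphism $L$, in which case there is no element of $\PGL(V)$ realizing the limiting incidence data and your reconstruction has nothing to reconstruct at $x_{0}$. Ruling this out is the heart of the paper's proof: if $L$ were singular with kernel $K$ and image $R$, one chooses (via the general-position Corollaries \ref{coro:spgoal} and \ref{coro:sogoal} together with Borel density, i.e.\ Lemma \ref{lem:general}) a translate $\gamma_{l}$ so that $W_{E,*}^{l}$ and $W_{F,*}^{l}$ are each in general position with $K$; then the limits $Q_{E}=\lim g_{n}W_{E,*}^{l}$ and $Q_{F}=\lim g_{n}W_{F,*}^{l}$ each either contain $R$ or are contained in $R$, so they cannot be transversal, contradicting the continuity and transversality of the dominated splitting $E^{l}_{x_{0}}\oplus F^{l}_{x_{0}}=T_{x_{0}}M$. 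Your proposal invokes the general-position corollaries only to enrich the orbit cloud for reconstruction, not for this non-degeneration argument, and your ``principal obstacle'' paragraph locates the difficulty elsewhere (matching the splitting with the spectrum of $\pi_{i}(\gamma_{0})$). Without the singular-limit argument the proof is incomplete.
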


Enumerate elements of $\Gamma$ into $\Gamma=\{\gamma_{0},\gamma_{1},\dots\}$ where $\gamma_{0}$ is the distinguished element that $\alpha(\gamma_{0})$ admits dominated splitting. Then for any $\gamma_{j}\in \Gamma$, $\alpha(\gamma_{j}\gamma_{0}\gamma_{j}^{-1})$ admits dominated splitting. More precisely, there is $C_{j},\lambda_{j}>0$ such that \[TM=E^{j}\oplus F^{j},\quad \frac{||D_{x}(\alpha(\gamma_{j}\gamma_{0}^{n}\gamma_{j}^{-1}))(v)||}{||D_{x}(\alpha(\gamma_{j}\gamma_{0}^{n}\gamma_{j}^{-1}))(w)||}< C_{j}e^{-\lambda_{j}n},\] for all $x\in M$, all unit vectors $ v\in E^{j}, w\in F^{j}$, and $n\ge 0$. Note that \[E_{j}=\alpha(\gamma_{j})E,\textrm{ and } F_{j}=\alpha(\gamma_{j})F.\] Therefore, $\dim E^{j}$ and $\dim F^{j}$ does not depend on $j$ so that we can denote $\dim E^{j}=s, \dim F^{j}=u$.

Denote the projectivization of the measurable section $\sigma$ in Lemma \ref{lem:scrapply} as $C\colon M\to \overline{P}$. Note that $C$ is just measurable section. Lemma \ref{lem:scrapply} says that the derivative is (measurably) conjugate to representation $\pi_{1}$ or $\pi_{2}$ up to $\pm I_{V}$. Therefore, we can find non trivial subspaces $W_{E,1}^{j}, W_{E,2}^{j}, W_{F,1}^{j}$ and  $W_{F,2}^{j}$ in $V$ for $\mu$-almost every $x\in M$ such that \[C(x)W_{E,1}^{j}=E_{x}^{j}, \quad C(x)W_{F,1}^{j}=F_{x}^{j} \textrm{ for a.e. } x\in \iota^{-1}(1)\] and \[C(x)W_{E,2}^{j}=E_{x}^{j}, \quad C(x)W_{F,2}^{j}=F_{x}^{j} \textrm{ for a.e. } x\in \iota^{-1}(2).\] 

Furthermore, $W_{E,*}^{j}$ and $W_{F,*}^{j}$ are the corresponding spaces for the dominated splitting of the linear map $\pi_{*}(\gamma_{j}\gamma_{0}\gamma_{j}^{-1})$ for $*\in\{1,2\}$ on $V$. So that \[W_{E,*}^{j}=\pi_{*}(\gamma_{j})W_{E,*}^{0},\textrm{ and } W_{F,*}^{j}=\pi_{*}(\gamma_{j})W_{F,*}^{0},\] for any $j$ and $*$. Note that for all $j$, \[\dim W_{E,1}^{j}=\dim W_{E,2}^{j}=s, \quad \dim W_{F,1}^{j}=\dim W_{F,2}^{j}=u.\]

We have the standard algebraic $\GL(V)$ action on each  Grassmannian varieties $\Gr(V,s)$ and $\Gr(V,u)$ consists of dimension $s$ and $u$ subspaces respectively. We denote the stabilizer of the action as $\Stab$ for simplicity. 

We prove the following lemma first.

\begin{lem}\label{lem:lin}
We can find $m\ge 0$ such that \[\bigcap_{j=0}^{m-1}\Stab(W_{E,*}^{j}),\bigcap_{j=0}^{m-1}\Stab(W_{F,*}^{j})\subset\Rbb^{\times}\cdot\{I_{V}\},\] for each $*\in\{1,2\}$.  Especially, their images in $\PGL(V)$ are all trivial. 
\end{lem}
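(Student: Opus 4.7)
The plan is to consider, for each $*\in\{1,2\}$, the infinite intersection $H^{E}_{*}:=\bigcap_{j\ge 0}\Stab(W_{E,*}^{j})$ (and analogously $H^{F}_{*}$), show it is contained in $\Rbb^{\times}\cdot\{I_{V}\}$, and then read off a finite $m$ from the Noetherian property of algebraic subgroups of $\GL(V)$. Since each $\Stab(W_{E,*}^{j})$ is a closed algebraic subgroup, the descending chain $\bigcap_{j=0}^{m-1}\Stab(W_{E,*}^{j})$ stabilizes at some finite $m$, at which point it coincides with $H^{E}_{*}$; taking the maximum of the four resulting values of $m$ gives a single index that works for all four intersections in the statement.

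The first key step is to check that $H^{E}_{*}$ is normalized by $\pi_{*}(\Gamma)$. Since $\Gamma=\{\gamma_{0},\gamma_{1},\ldots\}$ enumerates all of $\Gamma$, for each index $k$ and each $j$ the element $\gamma_{k}^{-1}\gamma_{j}$ equals $\gamma_{k(j)}$ for some $k(j)$. Using $W_{E,*}^{j}=\pi_{*}(\gamma_{j})W_{E,*}^{0}$, a direct computation gives
\[
\pi_{*}(\gamma_{k})\,g\,\pi_{*}(\gamma_{k})^{-1}\,W_{E,*}^{j}
= \pi_{*}(\gamma_{k})\,g\,W_{E,*}^{k(j)}
= \pi_{*}(\gamma_{k})\,W_{E,*}^{k(j)}
= W_{E,*}^{j}
\]
for every $g\in H^{E}_{*}$, so the conjugate stabilizes every $W_{E,*}^{j}$ and hence lies in $H^{E}_{*}$.

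Next I would apply the Borel density theorem: since $\Gamma$ is a lattice in the simple Lie group $G$, the image $\pi_{*}(\Gamma)$ is Zariski dense in $\pi_{*}(G)$. Because belonging to the normalizer of $H^{E}_{*}$ is a Zariski-closed condition on $\GL(V)$, $H^{E}_{*}$ is therefore normalized by all of $\pi_{*}(G)$, which in our setup is $\SL(V)$, $\Sp(V,\omega)$, or $\SO(V,Q)$. Lemma~\ref{lem:Ferlin} then forces the dichotomy $H^{E}_{*}\supset\pi_{*}(G)$ or $H^{E}_{*}\subset\Rbb^{\times}\cdot\{I_{V}\}$. The first alternative would make $W_{E,*}^{0}$ a $\pi_{*}(G)$-invariant subspace, but $\pi_{*}$ is irreducible and $W_{E,*}^{0}$ has dimension $s$ with $0<s<d$ because the dominated splitting is nontrivial, a contradiction. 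Hence $H^{E}_{*}\subset\Rbb^{\times}\cdot\{I_{V}\}$ and its projectivization in $\PGL(V)$ is trivial; the identical argument with $F$ in place of $E$ handles the other splitting piece.

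The only real subtlety is verifying that the enumeration of $\Gamma$ makes the normalization computation go through cleanly and confirming that Borel density applies uniformly across the three algebraic settings; everything else is formal once the Noetherian reduction and the input from Lemma~\ref{lem:Ferlin} are in place.
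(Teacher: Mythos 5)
Your proof is correct and follows essentially the same route as the paper: reduce to the infinite intersection, use Borel density to upgrade normalization by $\pi_{*}(\Gamma)$ to normalization by $\pi_{*}(G)$, invoke Lemma~\ref{lem:Ferlin} together with irreducibility of $\pi_{*}$ to rule out the large alternative, and finish with the Noetherian property. The only cosmetic difference is that you verify $\pi_{*}(\Gamma)$-normalization by a direct conjugation computation before applying Borel density, whereas the paper applies Borel density directly to rewrite $\bigcap_{j}\pi_{*}(\gamma_{j})\Stab(W_{E,*}^{0})\pi_{*}(\gamma_{j})^{-1}$ as an intersection over all of $G$; both are valid.
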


\begin{proof}[Proof of Lemma \ref{lem:lin}]
 Define for each $*\in\{1,2\}$, 
\[S_{E,*}=\bigcap_{j=0}^{\infty} \Stab(W_{E,*}^{j})=\bigcap_{j=0}^{\infty}\Stab\left(\pi_{*}(\gamma_{j})W_{E,*}^{0}\right)=\bigcap_{j=0}^{\infty}\pi_{*}(\gamma_{j})\Stab(W_{E,*}^{0})\pi_{*}(\gamma_{j})^{-1}.\]
We claim that $S_{E,*}$ is contained in $\Rbb^{\times}\cdot \{I_{V}\}$.

Using Borel density theorem (\cite{Borel3}) asserts that 
\[S_{E,*}=\bigcap_{g \in G}\pi_{*}(g)\Stab(W_{E,*}^{0})\pi_{*}(g)^{-1}.\] 
This implies that $S_{E,*}$ is normalized by $\pi_{*}(G)$. Note that $S_{E,*}$ is an algebraic group defined over $\Rbb$. 

Recall that for the case \textbf{SL}, we assumed that \[\pi_{1}(G)=\pi_{2}(G)=\SL(V).\] For the case \textbf{Sp}, we put the standard symplectic form $\omega$ on $V$ so that \[\pi_{1}(G)=\pi_{2}(G)=\Sp(V,\omega).\] Finally, for the case \textbf{SO}, we put a quadratic form $Q$ with signature $(0,n,n)$ on $V$ so that \[\pi_{1}(G)=\pi_{2}(G)=\SO(V,Q).\]

Lemma \ref{lem:Ferlin} says that $S_{E,*}$ contains $\pi_{*}(G)$ or is contained in $\Rbb^{\times}\cdot \{I_{V}\}$. The irreducibility of $\pi_{*}$ says that $S_{E,*}$ should be contained in $\Rbb^{\times}\cdot \{I_{V}\}$. We can define and argue similarly for $S_{F,*}$ so that we have \[S_{F,*}=\bigcap_{j=0}^{\infty} \Stab(W_{F,*}^{j})\subset\Rbb^{\times}\cdot\{I_{V}\}.\]

Note that $\Stab(W_{E,*}^{j})$, $\Stab(W_{F,*}^{j})$, $S_{E,*}$, and $S_{F,*}$ are all real algebraic groups. By Noetherian property, in all cases, we can find $m$ such that \[S_{E,*}=\bigcap_{j=0}^{m-1}\Stab(W_{E,*}^{j}), \quad S_{F,*}=\bigcap_{j=0}^{m-1}\Stab(W_{F,*}^{j}),\] for each $*\in\{1,2\}$. Also, we already proved that \[S_{E,*},S_{F,*}\subset\Rbb^{\times}\cdot\{I_{V}\}.\] Especially, their images in $\PGL(V)$ are all trivial. \end{proof}

Lemma \ref{lem:lin} implies that if we consider $\PGL(V)$ action on $\Gr(V,s)$ and $\Gr(V,u)$ then the common stabilizer of $\{W_{E,*}^{j}\}_{j}$ is trivial as well as for common stabilizer of $\{W_{F,*}^{j}\}_{j}$.

From now on, we fix $m$ as in Lemma \ref{lem:lin}. Let $\Gr(s)$ and $\Gr(u)$ be the Grassmannian bundle over $M$ that consists dimension $s$ and $u$ subspaces of the tangent space, respectively. 
Then we can define the fiberwise action of $\PGL(T_{x}M)$ on $\Gr(T_{x}M,s)^{m}\times \Gr(T_{x}M,u)^{m}$. We denote the fiberwise orbit map for each $*\in\{1,2\}$ as
\[\Phi^{0}_{*}\colon \overline{P} \to  \left(\left(\Gr(s)^{m}\right)\times \left(\Gr(u)^{m}\right)\right)\]  \[\Phi^{0}_{*}\left(x,[p_{x}]\right)\]\[= \left(x,\left([p_{x}](W_{E,*}^{0}),\dots, [p_{x}](W_{E,*}^{m-1})\right), \left([p_{x}](W_{F,*}^{0}), \dots, [p_{x}](W_{F,*}^{m-1})\right)\right).\]
Lemma \ref{lem:lin} implies that the common stabilizer is trivial so that we can prove the followings.

\begin{lem}[Orbit map is local embedding and smooth.]\label{lem:injorb}
$\Phi^{0}_{*}$ is smooth injective local embedding map. Furthermore, the inverse map $\left(\Phi^{0}_{*}\right)^{-1}$ defined on $\Phi^{0}_{*}(\overline{P})$ is continuous.
\end{lem}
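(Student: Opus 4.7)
The plan is to verify each of the four properties---smoothness, injectivity, the local embedding property, and continuity of the inverse---separately, and in each case the heart of the matter will reduce to the algebraic input of Lemma~\ref{lem:lin}. Smoothness should be essentially automatic: the map $\Phi^{0}_{*}$ is a bundle map over $M$ whose fiberwise formula is the smooth (indeed algebraic) $\PGL(T_xM)$-action on the Grassmannians of $T_xM$ applied to the fixed subspaces $W^{j}_{E,*}$ and $W^{j}_{F,*}$ of $V$. For injectivity, I would first observe that $\Phi^{0}_{*}$ covers the identity on $M$, so equal images force equal basepoints. In the fiber over $x$, if $\Phi^{0}_{*}(x,[p_x]) = \Phi^{0}_{*}(x,[q_x])$, then $[q_x]^{-1}[p_x] \in \PGL(V)$ stabilizes every $W^{j}_{E,*}$ and $W^{j}_{F,*}$, and must therefore be trivial by Lemma~\ref{lem:lin}.

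For the local embedding property, I plan to reduce to showing that $\Phi^{0}_{*}$ is a smooth immersion. Since $\Phi^{0}_{*}$ is the identity on the base direction, only the fiberwise derivative is at issue. At the identity of $\PGL(V)$, the derivative of the orbit map
\[
\PGL(V) \longrightarrow \bigl(\Gr(V,s)\bigr)^m \times \bigl(\Gr(V,u)\bigr)^m
\]
has kernel equal to the Lie algebra of the common stabilizer of the $W^{j}$'s. Since this stabilizer is a Zariski-closed subgroup of $\PGL(V)$ that is trivial as a set by Lemma~\ref{lem:lin}, its Lie algebra vanishes, and so the fiberwise orbit map is an immersion. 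The local normal form for immersions then yields the local embedding conclusion.

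The main obstacle I expect is the continuity of the inverse on $\Phi^{0}_{*}(\overline{P})$, since a bijective smooth immersion need not be a topological embedding in general. To overcome this I plan to invoke the classical fact that orbits of algebraic group actions on algebraic varieties are locally closed, hence are smooth submanifolds of the ambient variety of dimension $\dim H - \dim \Stab$. Applied fiberwise, the image of $\Phi^{0}_{*}$ restricted to a fiber is a smooth submanifold of the Grassmannian product of dimension $\dim \PGL(V)$, matching the domain dimension by the triviality of the stabilizer; a bijective smooth immersion between manifolds of the same dimension is then a diffeomorphism. Uniformity of this construction in $x$ together with compactness of $M$ will give the global continuity of $(\Phi^{0}_{*})^{-1}$ on $\Phi^{0}_{*}(\overline{P})$.
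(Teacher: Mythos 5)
Your proposal is correct and follows essentially the same route as the paper's (much terser) proof: smoothness from algebraicity of the action, injectivity and the immersion property from the triviality of the common stabilizer in Lemma~\ref{lem:lin} together with the inverse function theorem, and continuity of the inverse from local closedness of algebraic orbits. You have simply filled in the details (notably the Lie-algebra-of-the-stabilizer computation and the equidimensionality argument) that the paper leaves implicit.
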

\begin{proof}[Proof of Lemma \ref{lem:injorb}] 
The regularity of $\Phi^{0}_{*}$ comes from the fact that the action is algebraic.   In addition, note that the algebraic action the orbit is locally closed with respect to Zariski topology so that with respect to Hausdorff topology. Now Lemma \ref{lem:lin} and the inverse function theorem shows that the map $\Phi^{0}_{*}$ is injective local embedding and the inverse map is continuous on $\Phi^{0}_{*}(\overline{P})$.  \end{proof}

Now we are ready to prove the proposition \ref{prop:scrcont}.
 \begin{proof}[Proof of the proposition \ref{prop:scrcont}]
Recall that we denoted $m$ as in Lemma \ref{lem:lin}.

Define the continuous map $\tau$ as
\[\tau\colon M\to \left(\left(\Gr(s)^{m}\right)\times \left(\Gr(u)^{m}\right)\right)\]  \[\tau(x)=\left(x,\left(E^{0}_{x},\dots, E^{m-1}_{x}\right),\left(F^{0}_{x},\dots, F^{m-1}_{x}\right)\right).\] 
Indeed, as the splitting is continuous, we know that the map $\tau$ is continuous.

Lets denote $U_{*}$ as the image of $\Phi^{0}_{*}$ for each $*\in\{1,2\}$ and $U=\bigcup_{*\in\{1,2\}}U_{*}$. Recall that we denote the measurable section $C:M\to \overline{P}$ as the projection of that $\sigma$ that comes from Lemma \ref{lem:scrapply}.  Then for $\mu$-almost every point $x\in \tau^{-1}(U)$, we have \[\tau(x)=\Phi^{0}_{*}(C(x)),\] where $\iota(x)=*$. As $\mu$ is fully supported, we can find $M^{0}\subset \tau^{-1}(U)$ so that 
\begin{enumerate}
\item $M^{0}$ is dense in $M$,
\item $\mu(M^{0})=1$, and
\item for all $x\in M^{0}$, we have  \[\tau(x)=\Phi^{0}_{*}(C(x)),           \] where $\iota(x)=*\in\{1,2\}$.
\end{enumerate}

First, we show that $\tau(M)\subset U$. Assume that $M\neq \tau^{-1}(U)$. Then for any $x_{0}\in M\setminus \tau^{-1}(U)$, we can find the sequence $\{x_{n}\}_{n\in\Nbb}\subset M^{0}$ so that $x_{n}\to x_{0}$ as $n\to \infty$.
We may find subsequence $\{x_{n_{k}}\}_{k}$ so that $\iota(x_{n_{k}})=1$ or $2$ for all $k$. For notational convention, just denote such a subsequence as $\{x_{n}\}$ so that $\iota(x_{n})=*$ for some fixed $*\in\{1,2\}$ for all $n$. 

Now as $x_{n}\in M^{0}$ for any $n$, we have \[\tau(x_{n})=\Phi^{0}_{*}(C(x_{n})).\] We use local trivialization at $x_{0}$. We can find a homeomorphism $\phi\colon  \Ocal\times \GL(V) \to P|_{\Ocal}$, so that $\overline{\phi}\colon \Ocal\times \PGL(V)\to\overline{P}|_{\Ocal}$ on an open neighborhood $\Ocal$ of $x_{0}$. For sufficiently large $n$, we may assume that $x_{n}\in\Ocal$. Then we can find sequence $g_{n}\in \GL(V)$ such that \begin{enumerate}
\item $\{g_{n}\}_{n\in\Nbb}$ is bounded, and
\item $\overline{\phi}(x_{n},[g_{n}])=C(x_{n})$.
\end{enumerate}

Passing to subsequence, we can find endomorphism on $V$, or equivalently $V$ matrix $L\in M_{V}(\Rbb)$ so that $g_{n}\to L$ as $n\to \infty$. If $L\in \GL(V)$, then using continuity of $\tau$ and $\Phi^{0}_{*}$,
\begin{align*}
 \tau(x_{0})&=\lim_{n\to\infty}\tau(x_{n})\\
 &=\lim_{n\to\infty}\Phi^{0}_{*}(C(x_{n}))=\lim_{n\to\infty}\Phi^{0}_{*}(\overline{\phi}(x_{n},[g_{n}]))\\
 &=\Phi^{0}_{*}(\overline{\phi}(x_{0},[L])).\end{align*}
  This contradicts the assumption that $x_{0}\in M\setminus \tau^{-1}(U)$. Therefore, $L\notin\GL(V)$ so that we can find nontrivial subspaces \[K=\ker L, R=L(V) <V.\]

We claim the following lemma that says we can make the subspaces are in general positions. We will provide the proof later.
\begin{lem}\label{lem:general}
There is a  $\gamma_{l}\in\Gamma$ so that \[\pi_{*}(\gamma_{l})W_{E,*}^{0}=W_{E,*}^{l} \textrm{ and } \pi_{*}(\gamma_{l})W_{F,*}^{0}=W_{F,*}^{l}\] are in general position with $K$.  
\end{lem}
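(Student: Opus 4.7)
The plan is to reduce the lemma to finding a single element of $\pi_*(G)$ that simultaneously puts both $W_{E,*}^0$ and $W_{F,*}^0$ into general position with $K$, and then to transfer that element into $\pi_*(\Gamma)$ via the Borel density theorem applied to the lattice $\Gamma < G$. The hard, case-by-case geometric work has already been carried out in Section \ref{sec:aux}, so the remaining argument is almost formal.

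For each $X\in\{E,F\}$, define
\[
U_X = \{h\in\pi_*(G)\,:\, hW_{X,*}^0 \text{ and } K \text{ are in general position}\}.
\]
The condition that two subspaces of prescribed dimensions be in general position is cut out by the nonvanishing of suitable minors of the matrix whose columns are bases of the two subspaces, hence defines a Zariski-open subset of the Grassmannian. Pulling back via the algebraic orbit map $h\mapsto hW_{X,*}^0$, each $U_X$ is Zariski-open in $\pi_*(G)$. Next I would argue $U_E$ and $U_F$ are nonempty. In case \textbf{SL}, $\pi_*(G)=\SL(V)$ acts transitively on the relevant Grassmannians, so nonemptiness is immediate. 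In cases \textbf{Sp} and \textbf{SO}, nonemptiness of $U_E$ (applied to $W_1=W_{E,*}^0$, $W_2=K$) and of $U_F$ is precisely the content of Corollary \ref{coro:spgoal} and Corollary \ref{coro:sogoal}, respectively.

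Since $\pi_*(G)\in\{\SL(V),\Sp(V,\omega),\SO(V,Q)\}$ is a connected algebraic group, and hence irreducible as an algebraic variety, any two nonempty Zariski-open subsets intersect. Thus $U_E\cap U_F$ is a nonempty Zariski-open subset of $\pi_*(G)$. By the Borel density theorem applied to the lattice $\Gamma$ in the semisimple group $G$ with no compact factors, $\Gamma$ is Zariski-dense in $G$, so $\pi_*(\Gamma)$ is Zariski-dense in $\pi_*(G)$. Consequently there exists $\gamma_l\in\Gamma$ with $\pi_*(\gamma_l)\in U_E\cap U_F$, and for this choice both
\[
W_{E,*}^l = \pi_*(\gamma_l)W_{E,*}^0 \quad\text{and}\quad W_{F,*}^l=\pi_*(\gamma_l)W_{F,*}^0
\]
are in general position with $K$, as required.

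The main obstacle in principle would be arranging \emph{simultaneous} general position of both $W_{E,*}^l$ and $W_{F,*}^l$ with $K$ by a single element, rather than handling $E$ and $F$ one at a time; but this dissolves because the general-position loci are Zariski-open, and two nonempty Zariski-open sets in a connected group always meet. The other delicate point — producing even a single $h\in \pi_*(G)$ that works for $E$ or $F$ in the constrained settings of $\Sp$ and $\SO$ — has already been absorbed by Corollaries \ref{coro:spgoal} and \ref{coro:sogoal} in Section \ref{sec:aux}, so no new explicit construction is needed here.
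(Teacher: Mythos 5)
Your proposal is correct and follows essentially the same route as the paper: define the Zariski-open loci $U_E,U_F$ in $\pi_*(G)$, get nonemptiness from transitivity in the \textbf{SL} case and from Corollaries \ref{coro:spgoal} and \ref{coro:sogoal} in the \textbf{Sp} and \textbf{SO} cases, intersect using irreducibility of $\pi_*(G)$, and conclude with the Borel density theorem. No substantive differences.
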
Also, as a Grassmannian variety is compact, passing to subsequence, we may assume that \[g_{n}W_{E,*}^{l} \to Q_{E}, \quad g_{n}W_{F,*}^{l}\to Q_{F}\quad \textrm{as } n\to \infty,\] for some $Q_{E}\in \Gr(V,s)$ and $Q_{F}\in\Gr(V,u)$.

If $W_{E,*}^{l}$ intersects with $K$ trivially, then \[\lim_{n\to\infty} g_{n}W_{E,*}^{l} = LW_{E,*}^{l}\subset LV=R.\] Otherwise, $W_{E,*}^{l} + K =V$ so that \[R=LV\subset Q_{E}=\lim_{n\to\infty} g_{n}W_{E,*}^{l}.\]

This implies either $Q_{E},Q_{F}\subset R$, $Q_{E}\subset R\subset Q_{F}$, $Q_{F}\subset R\subset Q_{E}$ or $R\subset Q_{E},Q_{F}$. In any cases, $Q_{E}$ and $Q_{F}$ are not transversal.

Now denote continuous map $\tau_{E}\colon M\to \Gr(s)$ and $\tau_{F}\colon M\to \Gr(u)$ as \[\tau_{E}(x)=E_{x}^{l}, \tau_{F}(x)=F_{x}^{l},\] using $\sigma_{0}$.

Then, by construction, we have \[\tau_{E}(x_{n})=\phi_{x_{n}}^{-1}(g_{n}W_{E,*}^{l}),  \tau_{F}(x_{n})=\phi_{x_{n}}^{-1}(g_{n}W_{F,*}^{l}).\]

 Using continuity of $\tau_{E}$ and $\tau_{F}$, we can deduce that \[ \tau_{E}(x_{0})=\phi_{x_{0}}^{-1}(Q_{E}), \tau_{F}(x_{0})=\phi_{x_{0}}^{-1}(Q_{F}).\] Therefore, $Q_{E}$ and $Q_{F}$ should be transversal. It gives contradiction. In summary, we prove that $M=\tau^{-1}(U)$. Indeed, the above argument shows that if $x\in M$ is the limit of a sequence $\{x_{n}\}_{n=1}^{\infty}\subset \iota^{-1}(i)\cap M^{0}$ then $\tau(x)\in U_{i}$ for all $i\in \{1,2\}$.

Recall that, $\mu$ can be decomposed into $\mu=\mu_{1}+\mu_{2}$ where $\mu_{1}(\iota^{-1}(1))=1$, $\mu_{2}(\iota^{-1}(2))=1$ unless $\mu=\mu_{1}$ or $\mu=\mu_{2}$. We denote $\alpha(\Gamma)$ invariant compact sets $X_{1}$ and $X_{2}$ as the support of $\mu_{1}$ and $\mu_{2}$, respectively. As $\mu$ is fully supported, $M=X_{1}\cup X_{2}$. 

Define the map $\Ccal_{1}:X_{1}\to \overline{P}$ and $\Ccal_{2}:X_{2}\to \overline{P}$ as for all $x\in X_{1}$ or $y\in X_{2}$, \[\Ccal_{1}(x)=\left(\Phi_{1}^{0}\right)^{-1}\circ \tau (x), \quad \Ccal_{2}(y)=\left(\Phi_{2}^{0}\right)^{-1}\circ \tau(y),\] respectively. Indeed the above argument shows not only $M=\tau^{-1}(U)$ but also $\Ccal_{1}$ and $\Ccal_{2}$ are well defined on $X_{1}$ and $X_{2}$ respectively. Furthermore, $\Ccal_{1}$ and $\Ccal_{2}$ are continuous by Lemma \ref{lem:injorb}. We have \[C(x)=\Ccal_{1}(x)=\left(\Phi_{1}^{0}\right)^{-1}\circ\tau(x),\] \[C(y)=\Ccal_{2}(y)=\left(\Phi_{2}^{0}\right)^{-1}\circ\tau(y)\] for all $x\in\iota^{-1}(1)\cap M^{0}$ and $y\in \iota^{-1}(2)\cap M^{0}$. 
This implies that for each $i\in\{1,2\}$, \[[D_{x}\alpha(\gamma)]\Ccal_{i}(x)=\Ccal_{i}(\alpha(\gamma)(x))[\pi_{i}(\gamma)]\] for all $\gamma\in \Gamma$ and for all $x\in X_{i}$.   This proves the proposition. 				\end{proof}

As we said earlier, we prove Lemma \ref{lem:general} here.
\begin{proof}[Proof of Lemma \ref{lem:general}] 
Note that \[U_{E,*}=\{g\in \pi_{*}(G): gW_{E,*}^{0} \textrm{ is in general position with $K$.}\}, \] and \[U_{F,*}=\{g\in \pi_{*}(G): gW_{F,*}^{0} \textrm{ is in general position with $K$.}\}\] are Zariski open subsets for any $*\in\{1,2\}$. 

In the case \textbf{SL}, as $\SL(V)$ acts transitively on $\Gr(s)$ and $\Gr(u)$ respectively, $U_{E,*}$ and $U_{F,*}$ are nonempty Zariski open subsets. Therefore, the intersection $U_{E,*}\cap U_{F,*}$ is also a nonempty Zariski open subset as $\SL(V)$ is an irreducible variety. Using Borel density theorem, we can find $\gamma_{l}\in\Gamma$ so that $\pi_{*}(\gamma_{l})W_{E,*}^{0}=W_{E,*}^{l}$ and $\pi_{*}(\gamma_{l})W_{F,*}^{0}=W_{F,*}^{l}$ are in general position with $K$.

In the case of \textbf{Sp} and \textbf{SO}, using Corollary \ref{coro:spgoal} and \ref{coro:sogoal}, respectively, we have \[U_{E,*}=\{g\in \pi_{*}(G): gW_{E,*}^{0} \textrm{ is in general position with $K$.}\}\neq\emptyset. \]

Therefore, $U_{E,*}^{0}$ is non-empty Zariski open in $\pi_{*}(G)$. Similarly, \[U_{F,*}=\{g\in \pi_{*}(G): gW_{F,*}^{0} \textrm{ is in general position with $K$.}\}\] is a non-empty Zariski open subset in $\pi_{*}(G)$. Therefore, using irreducibility of $\pi_{*}(G)$, $U_{E,*}^{0}\cap U_{F,*}^{0}$ is also a non-empty Zariski open so that Borel density theorem asserts that there is $\gamma_{l}\in \Gamma$ such that $\pi_{*}(\gamma_{l})W_{E,*}^{0}=W_{E,*}^{l}$ and $\pi_{*}(\gamma_{l})W_{F,*}^{0}=W_{F,*}^{l}$ are in general position with $K$. \end{proof}

Now we will find lots of Anosov diffeomorphims under the volume preserving assumption.

Recall that, using Theorem \ref{thm:PR}, we can find a finite index subgroup $\Gamma_{0}$ in $\Gamma$ and a maximal $\Rbb$-split torus $A$ in $G$ such that $\Acal=A\cap \Gamma_{0}$ is a cocompact lattice in $A$.

\begin{prop}[Abundance of Anosov diffeomorphisms]\label{prop:anos}
Let $G$, $\Gamma$, $M$, and $d$ be as in Notation \ref{globnota}. Let $\alpha\colon\Gamma\to \Diff^{1}_{\Vol}(M)$ be a $C^{1}$ volume preserving action. 
Assume that there is $\gamma_{0}\in\Gamma$ such that $\alpha(\gamma_{0})$ admits dominated splitting.
Then every hyperbolic element $\gamma\in\Gamma$, especially $\gamma\in\Acal$, is an Anosov diffeomorphism. \end{prop}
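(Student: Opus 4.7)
The plan is as follows. Fix a hyperbolic $\gamma \in \Gamma$. Since each $\pi_i$ lands in $\SL(V)$, $\Sp(V,\omega)$, or $\SO(V,Q)$, both $\pi_1(\gamma)$ and $\pi_2(\gamma)$ are hyperbolic linear maps on $V$, with canonical stable/unstable splittings $V = V_i^s \oplus V_i^u$ for $i=1,2$. Transport via the continuous section $\Ccal_i\colon X_i \to \overline{P}$ produced by Proposition \ref{prop:scrcont}: set $E^s_i(x) := \Ccal_i(x)V_i^s$ and $E^u_i(x) := \Ccal_i(x)V_i^u$. These subbundles are well defined because projective frames act unambiguously on subspaces, they are continuous on the compact set $X_i$, and they are $D\alpha(\gamma)$-invariant by the projective conjugacy relation.

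The heart of the argument is to upgrade continuous invariance to uniform hyperbolicity. Locally lift $\Ccal_i$ to a frame $\tilde\Ccal_i \colon X_i \to P$; then
\[
D_x\alpha(\gamma^n) = c_n(x)\,\tilde\Ccal_i(\alpha(\gamma^n)x)\,\pi_i(\gamma^n)\,\tilde\Ccal_i(x)^{-1}, \qquad c_n(x)\in\Rbb^{\times}.
\]
Volume preservation gives $|\det D_x\alpha(\gamma^n)|=1$, while $|\det\pi_i(\gamma^n)|=1$ because $\pi_i(G)$ sits inside $\SL(V)$. Normalizing the local lift so that $|\det\tilde\Ccal_i|\equiv 1$ then forces $|c_n(x)|=1$, i.e.\ $c_n(x)\in\{\pm 1\}$. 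The remaining sign ambiguity is resolved on at most a $\{\pm 1\}$-cover of $X_i$, on which $\tilde\Ccal_i$ is genuinely continuous, so by compactness $\|\tilde\Ccal_i(\cdot)\|$ and $\|\tilde\Ccal_i(\cdot)^{-1}\|$ are bounded by some constant $K$. For any unit $v \in E^s_i(x)$ this yields
\[
\|D_x\alpha(\gamma^n)v\| \le K^2\,\|\pi_i(\gamma^n)|_{V_i^s}\| \le C\tau^n,
\]
with $\tau<1$ the spectral radius of $\pi_i(\gamma)|_{V_i^s}$, and the symmetric estimate on $E^u_i$ applied to $\gamma^{-1}$ gives uniform exponential expansion. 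Hence $\alpha(\gamma)$ is Anosov on each $X_i$.

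Finally, $M = X_1 \cup X_2$ since $\mu$ is fully supported. If $\mu$ is carried by a single $X_i$ we are done; otherwise, on $X_1 \cap X_2$ the two continuous, $D\alpha(\gamma)$-invariant, uniformly hyperbolic splittings must coincide by uniqueness of the Anosov splitting on a hyperbolic set, so they glue to a global continuous hyperbolic splitting on $M$ with uniform constants, making $\alpha(\gamma)$ an Anosov diffeomorphism of $M$. The principal obstacle is that the conjugacy from Proposition \ref{prop:scrcont} lives only at the projective level, so a careless lift to frames could introduce an unbounded scalar cocycle and annihilate any uniform hyperbolic rate. The volume-preserving hypothesis, which has not been used up to this point, is precisely what confines the scalar cocycle to $\{\pm 1\}$ and thereby converts the algebraic hyperbolicity of $\pi_i(\gamma)$ into genuine uniform dynamical hyperbolicity of $\alpha(\gamma)$.
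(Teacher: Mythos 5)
Your proof is correct and follows essentially the same route as the paper: take the continuous projective sections $\Ccal_i$ from Proposition \ref{prop:scrcont}, use volume preservation together with $\pi_i(G)\subset\SL(V)$ to confine the scalar cocycle from a local frame lift to $\{\pm 1\}$, and then transport the hyperbolic splitting $V = V_i^s \oplus V_i^u$ of $\pi_i(\gamma)$ to a uniformly hyperbolic splitting of $TM|_{X_i}$ using compactness of $X_i$ to bound the norms of the lifted frames. The one place you go beyond the paper is the final gluing step. The paper's proof simply asserts Anosov-ness after obtaining the two local splittings and addresses the interaction of $X_1$ and $X_2$ only afterwards (Remark \ref{rmk:SLanos}). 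Your ``uniqueness of the Anosov splitting'' argument on $X_1\cap X_2$ is the right instinct, but be careful: in case \textbf{SL}, for a generic hyperbolic $\gamma$ the stable dimensions of $\pi_1(\gamma)$ and $\pi_2(\gamma)={}^t\pi_1(\gamma)^{-1}$ are complementary rather than equal, so the two splittings \emph{cannot} coincide; the correct conclusion is that uniqueness then forces $X_1\cap X_2=\emptyset$, whence connectivity of $M$ (with $M = X_1\cup X_2$, both closed) shows one of the $X_i$ is empty --- which is exactly what the paper extracts in Remark \ref{rmk:SLanos}. With that small repair your argument closes cleanly and is, if anything, more explicit than the paper's about why the two local hyperbolic structures yield a single Anosov diffeomorphism on all of $M$.
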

\begin{proof} Using Proposition \ref{prop:scrcont} for the measure with positive density,  we can find continuous (global) sections to projective frame bundle over $M$, $\Ccal_{i}\colon X_{i}\to \overline{P}$ for $i\in\{1,2\}$ such that \[[D_{x}\alpha(\gamma)]\Ccal_{i}(x)=\Ccal_{i}(\alpha(\gamma)(x))[\pi_{i}(\gamma)]\] for any $x\in X_{i}$ and $\gamma\in \Gamma$.
As we assumed $\alpha(\Gamma)$ preserves volume, for any $\gamma\in\Gamma$ and for each $i\in\{1,2\}$, we can find continuous map $\widetilde{\Ccal_{i}}:X_{i}\to P|_{X_{i}}$ such that for all $x\in X_{i}$, 

\[D_{x}\alpha(\gamma)\widetilde{\Ccal_{i}}(x)=\pm \widetilde{\Ccal_{i}}(\alpha(\gamma)(x))\pi_{i}(\gamma).\] 
Indeed, the Jacobian determinant of $D_{x}\alpha(\gamma)$ is $\pm 1$ and the determinant of $\pi_{i}(\gamma)$ is always $1$. 

For any hyperbolic element $\gamma\in\Gamma$, denote $E_{\pi_{1}(\gamma)}^{s}$ and $E_{\pi_{1}(\gamma)}^{u}$ by sum of generalized eigenspaces corresponding eigenvalue less than $1$ and bigger than $1$ of $\pi_{1}(\gamma)$ respectively. $E_{\pi_{2}(\gamma)}^{u}$ and $E_{\pi_{2}(\gamma)}^{u}$ can be defined similarly. We have decomposition \[V=E_{\pi_{1}(\gamma)}^{s}\oplus E_{\pi_{1}(\gamma)}^{u}=E_{\pi_{2}(\gamma)}^{s}\oplus E_{\pi_{2}(\gamma)}^{u}\] with respect to $\pi_{1}(\gamma)$ and $\pi_{2}(\gamma)$ respectively. Then we have splitting \[T_{x}M|_{X_{1}}=\Ccal_{1}E_{\pi_{1}(\gamma)}^{s}\oplus \Ccal_{1}E_{\pi_{1}(\gamma)}^{u},\] \[T_{x}M|_{X_{2}}=\Ccal_{2}E_{\pi_{2}(\gamma)}^{s}\oplus \Ccal_{2}E_{\pi_{2}(\gamma)}^{u}\] which is continuous on $X_{1}$ and $X_{2}$ respectively and constants $C'>0$, $0<\lambda <1$ such that
\[||D_{x}\alpha(\gamma^{n})v_{i}^{s}||<C'\lambda^{n}, ||D_{x}\alpha(\gamma^{-n})v_{i}^{u}||<C'\lambda^{n}\] for all $x\in X_{i}$, $v_{i}^{s}\in \Ccal_{i}E_{\pi_{i}(\gamma)}^{s}$, $v_{i}^{u}\in \Ccal_{i}E_{\pi_{i}(\gamma)}^{u}$, and all $n>0$.  This implies that $\alpha(\gamma)$ is an Anosov diffeomorphism which the unstable (stable) distribution comes from the eigenspaces with a modulus bigger than $1$  (less than $1$ resp.) eigenvalue of $\pi_{*}(\gamma)$. This finishes the proof of the proposition.  \end{proof}

\begin{rmk}\label{rmk:SLanos} Indeed, we can find a continuous map $\widetilde{\Ccal}:M\to P$

for each $\gamma\in \Gamma$ such that for all $x\in M$, \[D_{x}\alpha(\gamma)\widetilde{\Ccal}(x)=\pm \widetilde{\Ccal}(\alpha(\gamma)(x))\pi_{i}(\gamma),\] for some fixe $i\in\{1,2\}$.

Indeed, in the cases of \textbf{Sp} and \textbf{SO}, as $\pi_{1}=\pi_{2}$, this is obviously true.  In the case of \textrm{SL}, we prove the above claim even though $\pi_{1}$ is not isomorphic to $\pi_{2}$ as follows. We can find $\gamma\in\Acal$ such that the eigenvalues of $\pi_{1}(\gamma)$ are \[\lambda_{1}>1>\lambda_{2}>\dots>\lambda_{n}\] so that the eigenvalues of $\pi_{2}(\gamma)$ is \[\lambda_{n}^{-1}>\dots>\lambda_{2}^{-1}>1>\lambda_{1}^{-1}.\] (For more detailed explanations, see Section \ref{sec:case1}.) Let $E_{1}^{u}$ be the eigenspace corresponding to eigenvalue $\lambda_{1}$ of $\pi_{1}(\gamma)$. Further, let $E_{1}^{s}$ be the sum of eigenspaces corresponding to the eigenvalues $\lambda_{2},\dots, \lambda_{n}$ of $\pi_{1}(\gamma)$.  Similarly, let $E_{2}^{s}$ be the eigenspace corresponding to the eigenvalue $-\lambda_{1}$ of $\pi_{2}(\gamma)$ and $E_{2}^{u}$ be the sum of eigenspaces corresponding to eigenvalues $-\lambda_{n},\dots,-\lambda_{2}$ of $\pi_{2}(\gamma)$. Then we have decomposition \[V=E_{1}^{s}\oplus E_{1}^{u}=E_{2}^{s}\oplus E_{2}^{u}\] with respect to $\pi_{1}(\gamma)$ and $\pi_{2}(\gamma)$ respectively. 

The proof of Proposition \ref{prop:anos} says that $\alpha(\gamma)$ is an Anosov diffeomorphism with the dimension of stable manifold is $1$ in $X_{1}$ and $n-1$ in $X_{2}$. This gives contradiction with connectivity of the manifold $M$. Indeed, the stable distribution of an Anosov diffeomorphism varies continuously. Therefore, we proved that either $X_{1}$ or $X_{2}$ is empty. This proves the claim.

\end{rmk}

\subsection{Characterizing the manifold $M$}\label{sec:M}
We characterize $M$ in this subsection. We prove that $M$ is a torus or a flat manifold in the case of \textbf{SL}, \textbf{Sp} and \textbf{SO} under the assumptions in the theorem. 
 \subsubsection{Case \textbf{SL}}\label{sec:case1}
Theorem \ref{thm:PR} implies that we can find a semisimple hyperbolic element $\gamma\in \Acal$ such that every eigenvalue is positive and only one eigenvalue is bigger than $1$. Indeed, we can find the elements $a_{\lambda}\in A$ such that $a_{\lambda}$ is similar to $\textrm{diag}(\lambda^{n-1},\lambda^{-1},\dots,\lambda^{-1})$. As $\Acal=A\cap \Gamma_{0}\simeq\Zbb^{n-1}$ is a lattice in $A\simeq \Rbb^{n-1}$, we can find $\gamma\in A\cap \Gamma_{0}$ and $\lambda$ such that $\gamma$ and $a_{\lambda}$ are arbitrary close (as elements in $A$) as much as we want. Here, one can use the fact that \[\left\{v\in \Rbb^{n-1}: ||v||=1, tv\in \Zbb^{n-1}\textrm{ for some } t\in \Rbb.\right\} \] is dense in the unit sphere $\Sbb^{n-2}$ in $\Rbb^{n-1}$.  

If $\lambda$ is sufficiently large and $\gamma\in A\cap \Gamma_{0}$ is sufficiently close to $a_{\lambda}$ then $\gamma$ is a desired semisimple hyperbolic element.

 Proposition \ref{prop:anos} and Remark \ref{rmk:SLanos} imply that the $\alpha(\gamma)$ is a codimension one Anosov diffeomorphism. Therefore, the manifold $M$ is homeomorphic to the torus due to the Franks--Newhouse theorem \ref{thm:FNAnosov}. This proves Theorem \ref{thm:main} in the case of \textbf{SL}.

\subsubsection{Case \textbf{Sp} and \textbf{SO}}  Theorem \ref{thm:PR} implies that we can find a semisimple hyperbolic element $\gamma\in \Gamma_{0}$ such that the eigenvalues can be written as $e^{\lambda_{1}}>e^{\lambda_{2}}>\dots >e^{\lambda_{n}}>1>e^{-\lambda_{n}}>\dots>e^{-\lambda_{1}}$ then \[1+\frac{\lambda_{n}}{\lambda_{1}}>\frac{\lambda_{1}}{\lambda_{n}},\quad1+\frac{\lambda_{n}}{\lambda_{1}}>\frac{\lambda_{1}}{\lambda_{n}}.\] Indeed, the inequality holds when $\lambda_{i}$'s are close to each other. We can find element $a_{\lambda}\in A$ such that $a_{\lambda}$ is similar to the matrix \[\textrm{diag}(\underbrace{\lambda,\dots, \lambda}_{n},\underbrace{\lambda^{-1},\dots, \lambda^{-1}}_{n}).\]
As $\Acal=A\cap \Gamma_{0}$ is a lattice, using the similar argument with in the case of \textbf{SL}, we can find $\gamma\in A\cap\Gamma_{0}$ and $\lambda$ such that $a_{\lambda}$ is close with $\gamma$ (as elements in $A$) as much as we want. If $\lambda$ is sufficiently large and $\gamma\in A\cap \Gamma_{0}$ is sufficiently close to $a_{\lambda}$ then the $\gamma$ is a desired semisimple hyperbolic element.
Proposition \ref{prop:anos} says that $\alpha(\gamma)$ is an Anosov diffeomorphism. Furthermore, the above inequalities show that it satisfies conditions in the Brin--Manning theorem \ref{thm:BM}. Therefore, $M$ is homeomorphic to a torus or a flat manifold. Especially, there is a finite cover $M_{0}$ of $M$ so that $M_{0}$ is homeomorphic to a tori. This proves Theorem \ref{thm:main} in the cases of \textbf{Sp} and \textbf{SO}.

\section{Global rigidity}\label{sec:rig}
In this section, we will prove Corollary \ref{coro:main}. We will prove both topological rigidity and smooth rigidity. 

In the previous subsection, we proved that there is a finite cover $M_{0}$ of $M$, so that $M_{0}$ is homeomorphic to $d$-tori. Recall that we assume that there is a finite index subgroup $\Gamma_{0}<\Gamma$ such that we can lift $\Gamma_{0}$ action to $M_{0}$.

The $C^{0}$ conjugacy comes from theorems in \cite{BRHW17} for torus $M_{0}$ that is finitely covered $M$. Indeed, one also can use the result in \cite{MQ01} for the topological conjugacy. This is the only place we need lifting assumption to $M_{0}$. For $C^{\infty}$ conjugacy, we will use theorems in either \cite{FKS} or \cite{RHW}. As every hyperbolic element will give an Anosov diffeomorphism, the linear data has no rank one factor. Indeed, one can directly use the results in \cite{BRHW17} to prove $C^{\infty}$ regularity of the conjugacy.

\subsection{Topological conjugacy}

Recall that we denote the lifted $\Gamma_{0}$ action on $M_{0}$ as $\alpha_{1}$. Note that $\Gamma_{0}$ is still a lattice in the $G$. We can find hyperbolic element $\gamma\in\Gamma_{0}$. As we assumed fully supported invariant probability measure $\mu$ on $M$, there is a fully supported Borel probability measure $\mu_{0}$ on $M_{0}$ that is preserved by $\alpha_{1}(\Gamma_{0})$.  The \cite[Proposition 9.7]{BRHW17} implies that the action $\alpha_{1}$ lifts on the universal cover of $\widetilde{M}$.
Then, using \cite[Theorem 1.3]{BRHW17}, we can deduce that there is finite index subgroup $\Gamma_{1}<\Gamma_{0}$ so that $\alpha_{1}|_{\Gamma_{1}}$ is topologically conjugate to its linear data $\rho_{1}$ of $\alpha_{1}$. Indeed, the $\alpha_{1}(\gamma)$ is an Anosov diffeomorphism so that we have topological conjugacy $h$, i.e. \[\rho_{1}(\gamma)\circ h=h\circ \alpha_{1}(\gamma)\] for all $\gamma\in \Gamma_{1}$. This proves that there is a $C^{0}$ conjugacy between $\alpha_{1}$ and $\rho_{1}$ as in Corollary \ref{coro:main}.

\begin{rmk} After we know that the $M_{0}$ is homeomorphic to a torus and action lifts to the universal cover, one may be able to use \cite[Theorem 1.3]{MQ01} in order to get topological conjugacy.

\end{rmk}
\begin{rmk} In the case \textbf{SL}, we deduced that $M$ is itself torus. Therefore, using \cite[Proposition 9.7]{BRHW17}, we can lifts the action $\alpha$ into the universal cover of $M$ for some finite index subgroup since we assumed existence of invariant measure.
\end{rmk}

\subsection{Smooth conjugacy}

 We know that for every hyperbolic element in $\gamma\in \Gamma_{2}$, $\alpha_{1}(\gamma)$ is an Anosov diffeomorphism. Again, using \cite{PR01}, we can find free abelian subgroup $Z$ of rank $2$ in $\Gamma_{2}$ such that every element $z\in Z$ is hyperbolic. This implies that the linear data $\rho_{1}|_{Z}$ does not have rank one factor. (See \cite[Lemma 2.9]{RHW}.) So we can deduce that the $h$ is indeed smooth due to \cite{FKS} or \cite{RHW}. This proves smoothness of the conjugacy so that completes the proof of Corollary \ref{coro:main}. 
 \begin{rmk} One can directly use the results in \cite[Theorem 1.7]{BRHW17} to prove the smoothness of the conjugacy.
 \end{rmk}

\medskip
\bibliography{Global_rigidity_of_actions_by_higher_rank_lattices_with_dominated_splitting.bbl}{}
\bibliographystyle{alpha}

 \end{document}